\documentclass[final,hidelinks,onefignum,onetabnum]{siamart250211}

\usepackage{lipsum}
\usepackage{amsfonts}
\usepackage{graphicx}
\usepackage{epstopdf}
\usepackage{algorithmic}

\usepackage{amsmath, amssymb, mathrsfs, mathtools}
\usepackage{dsfont}
\ifpdf
  \DeclareGraphicsExtensions{.eps,.pdf,.png,.jpg}
\else
  \DeclareGraphicsExtensions{.eps}
\fi

\newcommand{\normmm}[1]{{\left\vert\kern-0.25ex\left\vert\kern-0.25ex\left\vert #1
   \right\vert\kern-0.25ex\right\vert\kern-0.25ex\right\vert}}

\newsiamremark{remark}{Remark}
\newsiamremark{hypothesis}{Hypothesis}
\crefname{hypothesis}{Hypothesis}{Hypotheses}
\newsiamthm{claim}{Claim}
\newsiamremark{fact}{Fact}
\crefname{fact}{Fact}{Facts}
\newsiamthm{example}{Example}
\newtheorem{assumption}{Assumption}

\headers{The existence of ISE for $G\text{-}$SDEs}{D.Y. Zhao and H.Z. Zhao}

\title{The existence of invariant sublinear expectations for $G\text{-}$SDEs}

\author{Danyang Zhao\thanks{ School of Mathematics, Shandong University, Jinan, 250100, China
  (\email{dyang.z@mail.sdu.edu.cn}
  ).}
\and Huaizhong Zhao\thanks{Department of Mathematical Sciences, Durham University, Durham, DH1 3LE, U.K. and Research Center for Mathematics and Interdisciplinary Sciences, Shandong University, Qingdao, 266237, China
  (\email{huaizhong.zhao@durham.ac.uk})}.
}

\usepackage{amsopn}

\ifpdf
\hypersetup{pdftitle={The existence of invariant sublinear expectations for $G$-SDEs}, pdfauthor={Danyang Zhao and Huaizhong Zhao}}
\fi

\begin{document}

\maketitle

\begin{abstract}
In this paper, we study the existence of invariant sublinear expectations of Markovian semigroups on sublinear expectation spaces. 
To achieve this, we establish a complete metric space of sublinear expectations, on which we extend Harris' method to the nonlinear setting on the convergence of sublinear semigroups. We then explore two cases of $G\text{-}$diffusions by studying the Lyapunov function and the local Doeblin condition. One is the $G\text{-}$Brownian motion on the unit circle which is the case studied in Feng and Zhao \cite{Zhaonon}, but with the new method. Another is the multidimensional $G\text{-}$SDEs on the whole space $\mathbb{R}^d$. We establish, for the first time in the literature, the existence of the invariant sublinear expectation for $G\text{-}$SDEs under the non-degenerate and weakly dissipative assumption. For this, we prove that for a class of $G\text{-}$SDEs, the $G\text{-}$expectation can be represented as the supremum of the semigroup of a family of SDEs, of which the regularity is obtained by considering the Bismut-Elworthy-Li formula and the Denis-Hu-Peng representation for the distribution of $G\text{-}$Brownian motions.
\end{abstract}

\begin{keywords}
Sublinear Markovian semigroup; invariant sublinear expectations; $G\text{-}$Brownian motion; $G\text{-}$SDEs; Harris recurrence; fully nonlinear PDEs.
\end{keywords}

\begin{MSCcodes}
60G65; 60G10; 37A50.
\end{MSCcodes}

\section{Introduction}
It is well known that the existence of invariant measures is central in the theory of dynamical systems and essential to the study of ergodic theory (Birkhoff \cite{brikhoff}, von Neuman \cite{vonneumann}, Walters \cite{inergodic}, Einsiedler and Ward \cite{ergodicnumber}). These measures describe the long-term behaviour of dynamical systems and their statistical equilibrium and are widely used in the study of physics, climate dynamics, ecology, biology, economics, etc.

 Since the work of Kryloff and Bogolyuboff in 1937 (\cite{1937}), numerous researchers have made significant contributions to the study of the existence of invariant measures for stochastic systems, e.g., Da Prato and Zabczyk \cite{Daprato}, Durrett \cite{durrett}, Meyn and Tweedie \cite{markovstability}, Hairer and Mattingly \cite{Hairer2006}, to name but a few. However, there are still few relevant studies on the dynamics and ergodicity of stochastic systems with probability model uncertainties. Such uncertainty is inevitably present in many real world problems in which probability is very often undetermined, for example, higher-level uncertainty known as Knightian uncertainty in economics (\cite{knight}). In statistics literature, probability model uncertainties are modelled by upper expectations or upper provisions (Huber \cite{Huber1981}, Walley \cite{walley1991}). Dynamic models such as Markov chains (Cooman et al. \cite{imc2009}) and $G\text{-}$diffusions (Peng \cite{pengbook}) have been investigated. But what is the long-time dynamical behaviour and what equilibrium would such a system settle eventually? To address such a critical question in the sense of distribution, the study of invariant distributions, called invariant sublinear expectations (ISE), under uncertainty of probability models is a critical research problem.   
 
Probability uncertainties have been studied intensively in the last few decades. In 1989, Schmeidler \cite{1989SubjectivePA} relaxed the classical assumption of additive probabilities and introduced the Choquet integral to model the decision-making problem under uncertainty. 
 In 1997, based on the study of BSDEs, Peng \cite{peng1997Backward} introduced the concept of $g\text{-}$expectation, a nonlinear expectation that generalises the classical expectations by incorporating uncertainty and dynamic risk assessment. This concept was used in modelling the dynamic uncertainty and risk (Chen and Kulperger \cite{chenprice}, Chen and Epstein \cite{chen}). In 1999, Artzner et al. \cite{coherent1999} introduced four mathematical axioms that a coherent risk measure must satisfy, one of which is sub-additivity. Their method laid the foundation for financial mathematics under uncertainty. Follmer and Schied (\cite{FandS2004}, \cite{FandS2002}) generalised coherent risk measure to convex risk measures, establishing that sublinearity arises naturally in financial markets and underpins pricing, risk measurement, and hedging under uncertainty. It was proven that a coherent risk measure is equivalent to a sublinear expectation (SE). In 2004, Peng \cite{Peng2004} introduced the $G\text{-}$framework, which has desirable properties and topology, and has become an effective tool for analysing the uncertainty of dynamic problems (Peng \cite{Filpeng,Peng2005,peng2017,pengbook}).
 

Our study is motivated by existing research on the ergodicity of ISEs, which is defined as, for any invariant set, it has capacity $0$ or its complement has capacity $0$ (Feng and Zhao \cite{Zhaonon}). Feng et al. \cite{Zhaospa} is the discrete analogue of \cite{Zhaonon}. Other related results include De Cooman et al. \cite{imc2009} on a sufficient condition under which the limit upper expectation of imprecise Markovian chains with finite state space is the unique invariant upper expectation; T’Joens and de Bock \cite{imc2021} on the concept of weak ergodicity and a necessary and sufficient condition for weak ergodicity using graphical models in a finite state space. Cerreia-Vioglio et al. \cite{ergodiclower} studied a regime that for any invariant set, it has capacity either $0$ or $1$ for continuous lower probabilities and provided Birkoff's law of large numbers, while Sheng and Song \cite{ergodicuppersong} provided some further studies. The $\{0,1\}$-regime does not lead to irreducibility, but is equivalent to finite ergodic component decomposition (Feng et al. \cite{fengliuhuangzhao}).
In the continuous-time case, Feng and Zhao \cite{Zhaonon}
established the theoretical
framework for the ergodic theory of SE dynamical systems without imposing any continuity condition and proved the equivalence of irreducibility and their $\{0,0\}$\text{-}regime. This framework is applicable to G-diffusion processes. There are only two results for the existence of ISE for G-diffusions. One is the $G\text{-}$Brownian motion on the unit circle (\cite{Zhaonon}). The other is a $G\text{-}$diffusion process under a strong dissipative assumption (Hu et al. \cite{Huergodic}). The problem of the existence of ISE, in general, remains unsolved.

In this paper, we first study the existence of ISE for semigroups of sublinear Markovian chains. We assume a weak contraction condition for a Lyapunov function, i.e., contraction outside of a bounded set and a Doeblin-like condition on a bounded set. Although in the classical Brownian case, the existence and ergodicity of invariant measures for a wide range of diffusions have been established, it has not been possible to establish similar results to the case with $G\text{-}$Brownian motions. There are many fundamental obstacles due to the lack of both probabilistic and analytic tools. Ergodic theory under nonlinear expectations is still in its infancy, and a general result on the convergence of sublinear expectation operators is very much needed. Although the connection of fully nonlinear PDEs and $G\text{-}$Markovian processes is established in Peng \cite{pengbook}, many important properties that are readily available in the linear case remain elusive for G-Brownian motions when studying ergodicity. The present work makes substantial advances in bridging these theoretical gaps, with results applicable to a broad class of sublinear Markovian semigroups (SMSs).

     In contrast to previous research, we do not need to examine the existence of limits of expected values in a sublinear setting. Instead, we establish a complete metric space of sublinear expectation operators in which we provide a sufficient condition for the existence of the limit of SMSs. Moreover, the limit is a unique invariant expectation for the related SMS. Our proof relies on extending the work of Hairer and Mattingly \cite{Yet} and Harris' small set recurrence idea to the sublinear setting. We then study two specific $G\text{-}$diffusions. The first involves the $G\text{-}$Brownian motion on the unit circle. We prove the existence of its ISE using our method under the non-degenerate assumption. It is noted that Feng and Zhao \cite{Zhaonon} proved the same result; however, their work, while pioneering, involves explicit computation, making it difficult to apply to general cases. The second case is to consider a multidimensional $G\text{-}$SDE for which we assume a weakly dissipative condition on the drift. In this part of the proof, we show that the associated invariant expectation exists uniquely.
    
     In order to deal with general $G\text{-}$diffusions, some novel probabilistic and analytic tools are developed. We extend the results of Section 3 in Denis et al. \cite{DenishU} in two aspects. First, we show that the distribution of a class of G\text{-}SDEs, whose $G\text{-}$Brownian component is additive and non-degenerate, can be represented as the supremum of the linear expectations of a sequence of SDEs. Second, we relax the requirement that the relevant test function must be Lipschitz continuous. Instead, we show that as long as the test function is bounded and measurable, the G\text{-}expectation can still be represented as mentioned above. A key step to make this possible is to get a regularity result using the idea of the Bismut-Elworthy-Li formula for linear expectations, which are smootherised with gradients and Hessian being bounded uniformly for all the relevant probability measures and all test functions due to the uniform non-degeneracy of all possible choice of processes for the diffusion coefficients. This observation leads to the continuity of the sublinear semigroup in both space and time variables.

The rest of this paper is organised as follows. In Section 2, we recall some basic definitions that will be used later. In Section 3, we prove the existence of ISE in SE spaces, considering both discrete-time and continuous-time cases. In Section 4, we present examples of $G\text{-}$Brownian motions on the unit circle and multidimensional $G\text{-}$SDEs. Finally, some more proofs are provided in Section 5.

\section{Preliminaries}
In this section, we recall some basic definitions in SE spaces that are used later.  

Let $\Omega$ be a given set and $\mathcal{H}$ be a linear space of real-valued functions defined on $\Omega$, where $\mathcal{H}$ satisfies the following two conditions,\\
(1). $c\in \mathcal{H}$ for each constant $c$;\\
(2). $\vert X \vert \in \mathcal{H}$, if $X \in \mathcal{H}$.    \\
Thus, $\mathcal{H}$ can be taken as a space of random variables.

\begin{definition}[Sublinear expectation \cite{pengbook}]
 A sublinear expectation $\mathbb{E}$ is a functional $\mathbb{E}$: $\mathcal{H} \mapsto \mathbb{R}$ satisfying \\
 \rm{(1).} Monotonicity. $\mathbb{E}[X] \leq \mathbb{E}[Y], \ \mathrm{if}\  X \leq Y$.\\
 \rm{(2)}. Constant preserving. $ \mathbb{E}[c] = c\ \mathrm{for}\ c\in \mathbb{R}$.\\
\rm{(3)}. Sub-additivity. For each $X,Y \in \mathcal{H}$, $ \mathbb{E}[X+Y] \leq  \mathbb{E}[X]+ \mathbb{E}[Y]$.\\
 \rm{(4)}. Positive homogeneity. $\mathbb{E}[\lambda X] =  \lambda\mathbb{E}[X]\ \mathrm{for}\ \lambda \geq 0$.\\
\end{definition}

 The triplet $(\Omega, \mathcal{H}, \mathbb{E})$ is called a sublinear expectation space. When studying SE spaces, one often considers the following SE space $(\Omega, \mathcal{H},\mathbb{E})$. It satisfies the requirement that if $X_1,\dots,X_d \in \mathcal{H}$, then $\varphi(X_1,\dots,X_d)\in \mathcal{H}$ for any $\varphi \in C_{l.lip}(\mathbb{R}^d)$, where $C_{l.lip}(\mathbb{R}^d)$ denotes the linear space of functions $\varphi$ satisfying a local Lipschitz condition. The d-dimensional random variable $X=(X_1,\dots,X_d)$ is called a random vector, denoted by $X\in \mathcal{H}^d$. $C_{l.lip}(\mathbb{R}^d)$ can be replaced by any one of the following spaces of functions defined on $\mathbb{R}^d$ (\cite{pengbook}),\\
 $\bullet \  L^0(\mathbb{R}^d)$: the space of Borel measurable real-valued functions;\\
  $\bullet \  B_b(\mathbb{R}^d)$: the space of bounded functions in $L^0(\mathbb{R}^d)$;\\
   $\bullet \  C_b(\mathbb{R}^d)$: the space of continuous functions on $B_b(\mathbb{R}^d)$;\\
 $\bullet \  C_{b,lip}(\mathbb{R}^d)$: the space of real-valued bounded and Lipschitz continuous functions;\\
 $\bullet \  C_b^k(\mathbb{R}^d)$: the space of bounded, $k\text{-}$times continuously differentiable functions with bounded derivative of all orders less than or equal to $k$. \ \\

 Let $(\Omega, \mathcal{F})$ be a measurable space and $\mathcal{D}= L_b(\mathcal{F})$, the space of all $\mathcal{F}\text{-}$measurable real-valued functions such that ${{\rm sup}}_{\omega \in \Omega}\vert X(\omega)\vert < \infty$. It is easy to see (1). $1\in \mathcal{D}$; (2). $\vert X \vert \in \mathcal{D}$, if $X\in \mathcal{D}$. Then the triplet $(\Omega, \mathcal{D},\mathbb{E})$ is an SE space. Similarly, if $X_1,\dots,X_d \in \mathcal{D}$, then $\varphi(X_1,\dots,X_d)\in \mathcal{D}$ for any $\varphi \in B_b(\mathbb{R}^d)$. For any d-dimensional random vector $X \in \mathcal{D}^d$, the sublinear distribution of $X$ under $\mathbb{E}[\cdot]$ is defined by
 \begin{equation}
     T[\varphi]\coloneqq \mathbb{E}[\varphi(X)],\ \varphi \in B_b(\mathbb{R}^d).
 \end{equation}
 This distribution $T[\cdot]$ is again an SE defined on $B_b(\mathbb{R}^d)$.

 Consider a family of SE operators parameterized by $t\in \mathbb{R^{+}}$,
 \begin{equation*}
     T_t:\  B_b(\mathbb{R}^d)\rightarrow B_b(\mathbb{R}^d),\ t\geq 0.
 \end{equation*}
 
\begin{definition}[Sublinear Markovian semigroup \cite{Peng2005}]\label{deftt}The operator $T_t$ is called a sublinear Markovian semigroup if for any $\varphi \in B_b(\mathbb{R}^d)$, it satisfies\\
\rm{(1)}. For each fixed $(t,x)\in \mathbb{R}^+ \times \mathbb{R}^d$, $(T_t\varphi)(x)$ is an SE defined on $B_b(\mathbb{R}^d)$;\\
\rm{(2)}. $T_0\varphi=\varphi$;\\
\rm{(3)}. $t \to T_t\varphi$ satisfies the following Chapman semigroup formula,
\begin{equation*}
    (T_t \circ T_s)(\varphi)=T_{t+s}\varphi,\ t,s\geq 0. 
\end{equation*}
\end{definition}
 
We will present here an example of an SMS, which is closely related to the content in Section 4. In this example, we take $\varphi \in C_{b.lip}(\mathbb{R}^d)$, and later we will prove that $\varphi$ can be taken from $B_b(\mathbb{R}^d)$.
 
\begin{example}(Peng \cite{pengbook})\label{Gexample}
Define $\Omega=C^d_0(\mathbb{R}^+)$: the space of all $\mathbb{R}^d\text{-}$valued continuous paths $(\omega_t)_{t\in \mathbb{R}^+}$, with $\omega_0=0$, equipped with the distance
 \begin{equation*}
     \rho(\omega^1,\omega^2)\coloneqq \sum^{\infty}_{i=1}2^{-i}[(\underset{t \in[0,i]}{\rm max}\vert \omega^1_t-\omega^2_t\vert)\wedge1].
 \end{equation*}
 Then $(\Omega,\rho)$ is a complete separable metric space.

 Let $S(d)$ be the collection of symmetric $d\times d$ matrices and $S_{+}(d)$ the collection of positive definite symmetric $d\times d$ matrices. Assume $G:S(d)\to R$ is a given sublinear function which is monotonic on $S(d)$. Then there exists a bounded, convex and closed subset $\Sigma \subset S_{+}(d)$ such that 
\begin{equation*}
    G(A)=\underset{B\in \Sigma}{{\rm sup}}[\frac{1}{2}tr(AB)],\  for\  A\in S(d).
\end{equation*}
Let
\begin{equation*}
     Lip(\Omega)\coloneqq\{ \varphi(\omega_{t_1}, \omega_{t_2},\cdots, \omega_{t_n}):\forall n\geq 1, \ t_1,\cdots,t_n\in \mathbb{R}^{+},\ \forall \varphi \in C_{b,lip}((\mathbb{R}^{d})^n)\}.
 \end{equation*}
Then the $G\text{-}$normal distribution $N(\{0\}\times \Sigma)$ on $(\Omega,Lip(\Omega))$ exists, i.e. there exists a d-dimensional random vector $X$ on an SE space $(\Omega,\mathcal{D},\mathbb{E})$ satisfying
\begin{equation*}
    aX+b\bar{X}\overset{d}{=}\sqrt{a^2+b^2}X,\ for\ a,b\geq0,
\end{equation*}
where $\bar{X}$ is an independent copy of $X$ and $G(A)=\mathbb{E}[\frac{1}{2}\langle AX,X\rangle]$. In \cite{pengbook}, it has been proved that there exists a weakly compact family of probability measures $\mathcal{P}$ on $(\Omega,\mathcal{B}(\Omega))$ such that 
\begin{equation*}
    \mathbb{E}[X]=\underset{P\in\mathcal{P}}{\rm max}E_P[X],\ for\ X\in Lip(\Omega).
\end{equation*}
 Its canonical path is $G\text{-}$Brownian motion $\{B_t\}_{t\geq 0}$ on the SE space $(\Omega,\mathcal{D},\mathbb{E})$ satisfying
 \rm{(1)}. $B_0(\omega)=0$;
 \rm{(2)}. For each $t,s \geq 0$, the increment $B_{t+s}-B_t$ is $N(\{0\}\times s\Sigma)$ distributed and independent of $(B_{t_1},B_{t_2},\dots,B_{t_n})$, for each $n \in \mathbb{N}$ and $0 \leq t_1 \leq t_2 \leq \dots \leq t_n < t$.

 For each fixed $\varphi \in C_{b,lip}(\mathbb{R}^d)$, the function 
 \begin{equation*}
     u(t,x)\coloneqq \mathbb{E}[\varphi(x+B_t)],\ (t,x) \in [0,\infty)\times \mathbb{R}^d,
 \end{equation*}
 is the viscosity solution of the following $G\text{-}$heat equation
 \begin{equation*}
     \frac{\partial}{\partial t}u=G(D^2u),\ u(0,\cdot)=\varphi(\cdot).
 \end{equation*}
 Then $(T_t\varphi)(x)=u(t,x)$ defines an SMS.
\end{example}

According to Example \ref{Gexample}, the space of finite-dimensional cylinderical random variables that will be used later is introduced, for each $T\geq 0$,
 \begin{equation*}
     Lip(\Omega_T)\coloneqq\{ \varphi(B_{t_1}, B_{t_2},\cdots, B_{t_n}):\forall n\geq 1,\  t_1,\cdots,t_n\in [0,T],\  \varphi \in C_{b,lip}(\mathbb{R}^{d\times n})\},
 \end{equation*}
and we rewrite $ Lip(\Omega)\coloneqq \cup_{T}Lip(\Omega_T)$.
 
  Then a consistent SE called $G\text{-}$expectation can be constructed on $Lip(\Omega)$,  still denoted by $\mathbb{E}[\cdot]$, and $B_1$ is $G\text{-}$normally distributed under $\mathbb{E}[\cdot]$. The topological completion of $Lip(\Omega_T)$ (resp. $Lip(\Omega)$) under the Banach norm $\mathbb{E}[\vert \cdot \vert]$ is denoted by $L^1_{G}(\Omega_T)$ (resp. $L_G^1(\Omega)$). Thus, $\mathbb{E}[\cdot]$ is extended uniquely to an SE on $L^1_{G}(\Omega)$ (\cite{pengbook}). The space $L^p_G$ is the completion of $Lip(\Omega)$ under the norm $\Vert \xi \Vert_{L^p_G}\coloneqq (\mathbb{E}[\vert\xi\vert^p])^{\frac{1}{p}}$, for $p \geq 1$.

 In the next section, we will also consider the SMS parameterized by $k \in \mathbb{N}_{+}$, i.e.,
\begin{equation}\label{distdifinetion}
   \bar{T}^k: B_b(\mathbb{R}^d) \to B_b(\mathbb{R}^d), k\in \mathbb{N}.
\end{equation}
And the operator $\bar{T}^k$ satisfies the property in Definition \ref{deftt}, i.e., 
\rm{(1)}. For each fixed $(k,x)\in \mathbb{N}_{+}\times \mathbb{R}^d$, $(\bar{T}^k\varphi)(x)$ is an SE defined on $B_b(\mathbb{R}^d)$;
\rm{(2)}. $\bar{T}^0\varphi=\varphi$;
\rm{(3)}. $k\to \bar{T}^k\varphi$ satisfies $(\bar{T}^k \circ \bar{T}^l)(\varphi)=\bar{T}^{k+l}\varphi$ for $k,l \in\mathbb{N}_{+}$.
We can also give an example.
\begin{example}
    Let $(\Omega,\mathcal{F})$ be a measurable space, and $\mathcal{D}=L_b(\mathcal{F})$.  $\mathbb{E}[\cdot]$ is an SE and then $(\Omega, \mathcal{D}, \mathbb{E})$ is an SE space. Let $Z_k \in \mathcal{D}^d$ be a d-dimensional random vector in time $k \in \mathbb{N}_{+}$ with $Z_0=0$. Moreover, we assume that $Z_k$ has independent increments with additive property $Z_{k+l}=Z_k+Z_l$, for any $k,l \in\mathbb{N}_{+}$. Then for any $\varphi \in B_b(\mathbb{R}^d)$, $x \in \mathbb{R}^d$, we set $ (\bar{T}^k\varphi)(x)=\mathbb{E}[\varphi(x+Z_k)]$.
    It is easy to prove that $\bar{T}^k$ is an SMS parameterized by $k \in \mathbb{N}_{+}$.
\end{example}

\begin{definition}[Invariant sublinear expectation \cite{Zhaonon}, \cite{Huergodic}]
 An invariant sublinear expectation $\tilde{T}:\  B_b(\mathbb{R}^d)\ \rightarrow \  \mathbb{R}$ is an SE satisfying 
 \begin{equation*}
     (\tilde{T}T_s)(\varphi)=\tilde{T}(\varphi),\ for\ any\ \varphi\in  B_b(\mathbb{R}^d),
 \end{equation*}
 where $T_s$, $s\geq 0$ is an SMS.
\end{definition}

\section{Existence of ISEs---a general theorem}
In this section, we discuss sufficient conditions for the existence of ISE in SE spaces. Rather than directly considering the existence of the limit of sublinear expected values, we construct a complete metric space for SE operators in which we investigate whether their limits exist.  We start with the discrete Markovian chain case and then extend it to the continuous case. Notably, the proof for the discrete case can be readily generalized to that of the continuous case. 

In the discrete case, the SE space $(\Omega,\mathcal{D},\mathbb{E})$ and the d-dimensional random vector are defined as before. We consider a family of SEs parameterized by $k \in \mathbb{N}_+$, $\bar{T}^k:B_b(\mathbb{R}^d) \to B_b(\mathbb{R}^d)$ that satisfies all the desired properties in Definition \ref{deftt}.

In 2011, Hairer and Mattingly \cite{Yet} presented a proof of a refinement of Harris' ergodic theorem of Markovian chains. In this paper, we prove that it can be extended to SE spaces. For this, we assume that $\bar{T}$ satisfies the following assumptions.

\renewcommand{\theassumption}{A}
\begin{assumption}\label{Ass1}
  There exists a function $V : \mathbb{R}^d \to [0,\infty)$ and constants $K \geq 0$, $\gamma \in (0,1)$ such that for all $x \in \mathbb{R}^d$
  \begin{equation}
    (\bar{T}V)(x)\leq \gamma V(x)+K.
  \end{equation}
\end{assumption}
\renewcommand{\theassumption}{B}
\begin{assumption}\label{Ass2}
  There exists a constant $\alpha \in (0,1)$, a probability measure $\nu$ and a constant $n\in \mathbb{N}$, such that for all $k \geq n$, we have
  \begin{equation}
    \underset{x \in C}{{\rm inf}}-(\bar{T}^k(- \varphi))(x)\geq \alpha E_{\nu}\varphi,
  \end{equation}
  where $\varphi \in B_b(\mathbb{R}^d)$ and $\varphi \geq 0 $, $C = \{x \in \mathbb{R}^d: V(x)\leq R\}$ for some $R > 2K/(1-\gamma)$, $K$ and $\gamma$ are the constants in Assumption \ref{Ass1}.
\end{assumption}

 For the sake of analysis, we recall the following weighted supremum norms on $B_b(\mathbb{R}^d)$ depending on a scale parameter $\beta > 0$,
 \begin{equation}
    \|\varphi\|_\beta = \underset{x}{{\rm sup}}\frac{|\varphi|}{1+\beta V(x)}.
 \end{equation}
 Define a metric $d_\beta$ between points in $\mathbb{R}^d$ by
 \begin{equation}
   d_{\beta}(x,y)=\left\{
   \begin{aligned}
   &0,  &if\ x=y, \\
   &2+\beta V(x) + \beta V(y),   &if\ x \neq y,
   \end{aligned}
   \right.
 \end{equation}
  and this metric induces a Lipschitz seminorm on $B_b(\mathbb{R}^d)$,
 \begin{equation}
  \normmm{\varphi}_{\beta}=\underset{x \neq y}{{\rm sup}}\frac{|\varphi(x)- \varphi(y)|}{ d_{\beta}(x,y)}.
 \end{equation}
In \cite{Yet} it has been proved that
\begin{equation}
  \normmm{\varphi}_{\beta} = \underset{a \in \mathbb{R}}{{\rm inf}}\parallel \varphi + a \parallel_{\beta}.
\end{equation}

In order to prove the convergence of a family of SEs, we also define two metrics for any two SE operators $\bar{T}$, $\tilde{T}$. In the following lemma, we will prove that the space of SE operators $\mathcal{M} \coloneqq \{T: T$ is an SE on $B_b(\mathbb{R}^d)$\} is a complete metric space under these two metrics. For this, set for any $\bar{T},\ \tilde{T}\in \mathcal{M}$, 
 \begin{equation}
   \rho_{\beta}(\bar{T},\tilde{T})=\underset{\varphi:\parallel\varphi\parallel_{\beta}\leq 1}{{\rm sup}}\vert \bar{T}\varphi-\tilde{T}\varphi\vert,
 \end{equation}
 and
 \begin{equation}
   d_{\beta}(\bar{T},\tilde{T})=\underset{\varphi:\normmm{\varphi}_{\beta}\leq 1}{{\rm sup}}\vert \bar{T}\varphi-\tilde{T}\varphi\vert. 
 \end{equation}
 
 By the positive homogeneity of SEs, the metric $\rho_{\beta}$ for SE $\bar{T}$, $\tilde{T}$ can also be represented by
  \begin{equation}
   \rho_{\beta}(\bar{T},\tilde{T})=\underset{\varphi:\parallel\varphi\parallel_{\beta}\neq 0}{{\rm sup}}\frac{\vert \bar{T}\varphi-\tilde{T}\varphi\vert}{\parallel\varphi\parallel_{\beta}},
  \end{equation}
and $d_{\beta}$ also has similar expressions.
\begin{lemma}
  The space $\mathcal{M}$ is a metric space under the metric $\rho_{\beta}$ (and $ d_{\beta}$).
\end{lemma}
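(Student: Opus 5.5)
The plan is to check the metric axioms directly from the definitions of $\rho_{\beta}$ and $d_{\beta}$, using only the four defining properties of an SE. The main tool is translation invariance: for any SE $T$ and constant $c$,
\[
T(\varphi+c)=T\varphi+c .
\]
This holds because subadditivity and constant preservation give $T(\varphi+c)\le T\varphi+c$, and applying the same inequality to $\varphi=(\varphi+c)-c$ gives the reverse.

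\textbf{Step 1: symmetry, nonnegativity and the triangle inequality.} These are immediate. For each fixed $\varphi$, the quantity $|\bar T\varphi-\tilde T\varphi|$ is symmetric and nonnegative. For a third SE $\hat T$ we have
\[
|\bar T\varphi-\tilde T\varphi|\le |\bar T\varphi-\hat T\varphi|+|\hat T\varphi-\tilde T\varphi| .
\]
Taking the supremum over the same admissible set of $\varphi$ (either $\|\varphi\|_\beta\le1$ or $\normmm{\varphi}_\beta\le 1$) on both sides gives the triangle inequality for $\rho_\beta$ and for $d_\beta$.

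\textbf{Step 2: identity of indiscernibles.} Suppose $\rho_\beta(\bar T,\tilde T)=0$ and take any $\varphi\in B_b(\mathbb{R}^d)$ with $\varphi\neq 0$. Then
\[
0<\|\varphi\|_\beta\le \sup_x|\varphi(x)|<\infty ,
\]
so $\psi=\varphi/\|\varphi\|_\beta$ is admissible. Hence $\bar T\psi=\tilde T\psi$, and positive homogeneity gives $\bar T\varphi=\tilde T\varphi$. Since $\varphi=0$ is trivial, $\bar T=\tilde T$.

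For $d_\beta$ the same scaling argument applies whenever $0<\normmm{\varphi}_\beta<\infty$. The finiteness follows from $\normmm{\varphi}_\beta\le\|\varphi\|_\beta$, using the representation $\normmm{\varphi}_{\beta}=\inf_a\|\varphi+a\|_\beta$ from \cite{Yet}. If instead $\normmm{\varphi}_\beta=0$, then $\varphi$ is constant, since $d_\beta(x,y)\ge2$ for $x\ne y$. In that case constant preservation already gives $\bar T\varphi=\tilde T\varphi$. So $d_\beta(\bar T,\tilde T)=0$ forces $\bar T=\tilde T$.

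\textbf{Step 3: comparison of the two metrics.} Translation invariance shows that $\bar T\varphi-\tilde T\varphi$ is unchanged when $\varphi$ is replaced by $\varphi+a$. Combined with $\normmm{\varphi}_{\beta}=\inf_a\|\varphi+a\|_\beta$, this gives $d_\beta\le\rho_\beta$ up to an $\varepsilon$-approximation of the infimum. The comparison is useful for relating the two metrics, although it is not strictly needed for the statement.

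\textbf{Main obstacle: finiteness.} An admissible $\varphi$ satisfies only $|\varphi(x)|\le 1+\beta V(x)$, and $V$ may be unbounded. Each SE is defined only on $B_b(\mathbb{R}^d)$, so $\bar T(1+\beta V)$ is not available as a uniform bound. If $V$ is bounded, monotonicity gives
\[
\rho_\beta(\bar T,\tilde T)\le 2\bigl(1+\beta\sup V\bigr).
\]
In general I would first try to define the distance through the monotone limits $\bar T((1+\beta V)\wedge n)$ and check whether these are finite. Failing that, I would state the lemma with $\rho_\beta$ and $d_\beta$ as extended metrics taking values in $[0,\infty]$, or restrict $\mathcal M$ to those SEs for which $\sup_n T((1+\beta V)\wedge n)<\infty$. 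Under either convention, Steps 1 and 2 go through unchanged, since all the inequalities remain valid in $[0,\infty]$.
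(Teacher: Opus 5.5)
Your verification of the axioms follows the paper's route. Symmetry and the triangle inequality come from taking suprema of pointwise inequalities, identity of indiscernibles comes from rescaling (you make explicit the division by $\|\varphi\|_\beta$ that the paper leaves implicit), and translation invariance $T(\varphi+c)=T\varphi+c$ handles $d_\beta$. Your Step 3 can be upgraded to the equality the paper actually uses. Since $|\varphi(x)-\varphi(y)|\le\|\varphi\|_\beta\,(2+\beta V(x)+\beta V(y))$, every $\rho_\beta$-admissible $\varphi$ is $d_\beta$-admissible, so $\rho_\beta\le d_\beta$. Combined with your inequality this gives $\rho_\beta=d_\beta$, which makes your separate treatment of $d_\beta$ in Step 2 correct but unnecessary.

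Your finiteness concern is a genuine defect of the statement, and the paper's proof does not address it. Take $V(x)=|x|^2$, $\bar T=T_{\delta_0}$, and $\tilde T\varphi=\sup_x\varphi(x)$, which is an SE on $B_b(\mathbb{R}^d)$. The functions $\varphi_n=(1+\beta V)\wedge n$ satisfy $\|\varphi_n\|_\beta\le 1$ and $\tilde T\varphi_n-\bar T\varphi_n=n-1$, so $\rho_\beta(\bar T,\tilde T)=\infty$. On all of $\mathcal M$, then, only the extended-metric version holds.

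Your restriction is the natural fix: $\mathcal M_V=\{T\in\mathcal M:\sup_n T((1+\beta V)\wedge n)<\infty\}$.
\begin{itemize}
\item On $\mathcal M_V$ the distance is finite, because an admissible $\varphi$ bounded by $m$ satisfies $|\varphi|\le(1+\beta V)\wedge m$.
\item Every $T_{\delta_x}$ lies in $\mathcal M_V$, and so does every $T_{\delta_x}\bar T^k$ under Assumption A.
\item Any SE at finite $\rho_\beta$-distance from $\mathcal M_V$ lies in it, so $\mathcal M_V$ is closed under $\rho_\beta$-limits.
\end{itemize}
This matters later in the paper. The uniqueness step, which concludes $\rho_\beta(\tilde T_\infty,T_\infty)=0$ from $\rho_\beta(\tilde T_\infty,T_\infty)\le\bar\alpha\,\rho_\beta(\tilde T_\infty,T_\infty)$, is valid only when that distance is finite.
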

\begin{proof}
 The main point is to prove that $\rho_{\beta}$ is a metric on $\mathcal{M}$. It is easy to see that for any $\bar{T}$, $\tilde{T} \in \mathcal{M}$, $\rho_{\beta}(\bar{T},\tilde{T})= \rho_{\beta}(\tilde{T},\bar{T})$ and $\rho_{\beta}(\bar{T},\tilde{T}) \geq 0$. Now, assume $\rho_{\beta}(\bar{T},\tilde{T})=0 $. We have for any $\varphi \in B_b(\mathbb{R}^d)$,
  $\vert \bar{T}\varphi - \tilde{T}\varphi\vert =0$ which is to say that $\bar{T}=\tilde{T}$. So we have proved $\rho_{\beta}(\bar{T},\tilde{T})= 0$ if and only if $\bar{T}\varphi = \tilde{T}\varphi$.
   
   Now, we check the triangle inequality.
  Consider $T$, $\bar{T}$, $\tilde{T} \in \mathcal{M}$, then
\begin{equation*}
  \begin{split}
     &\rho_{\beta}(T,\bar{T})+ \rho_{\beta}(\bar{T},\tilde{T})
     =  \underset{\varphi:\parallel\varphi\parallel_{\beta}\leq 1}{{\rm sup}}\vert T\varphi-\bar{T}\varphi\vert+\underset{\varphi:\parallel\varphi\parallel_{\beta}\leq 1}{{\rm sup}}\vert \bar{T}\varphi-\tilde{T}\varphi\vert \\
      \geq  &\underset{\varphi:\parallel\varphi\parallel_{\beta}\leq 1}{{\rm sup}}\vert T\varphi-\bar{T}\varphi+\bar{T}\varphi-\tilde{T}\varphi\vert
     =  \rho_{\beta}(T,\tilde{T}).
  \end{split}
\end{equation*}
Thus, the triangle inequality holds. We show that $\mathcal{M}$ is a metric space under $\rho_{\beta}$.
It is easy to see, $\rho_{\beta}=d_{\beta}$ as $\{\varphi:\parallel\varphi\parallel_{\beta}\}$ and $\{\varphi:\normmm{\varphi}_{\beta}\}$ only differ by an additive constant and $T(\varphi+c)=T\varphi+c$ for any given constant $c$ and $T \in \mathcal{M}$. Thus, $\mathcal{M}$ is also a metric space under $d_{\beta}$. 
\end{proof}
\begin{lemma}
  The space $\mathcal{M}$ is complete under the metric $\rho_{\beta}$ (and $ d_{\beta}$).
\end{lemma}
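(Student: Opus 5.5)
Take a Cauchy sequence $\{T_n\}_{n\ge1}\subset\mathcal{M}$ for $\rho_{\beta}$. We build the limit pointwise on test functions, check it is a sublinear expectation, and then upgrade pointwise convergence to convergence in $\rho_{\beta}$ by the usual uniform Cauchy argument. For $d_{\beta}$ nothing further is needed: the previous lemma gives $d_{\beta}=\rho_{\beta}$, so completeness transfers directly.

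\textbf{Step 1: pointwise limit.} Fix $\varphi\in B_b(\mathbb{R}^d)$ with $\parallel\varphi\parallel_{\beta}\neq0$. By the ratio representation of $\rho_{\beta}$,
\begin{equation*}
\vert T_n\varphi-T_m\varphi\vert\le \rho_{\beta}(T_n,T_m)\parallel\varphi\parallel_{\beta}.
\end{equation*}
Here $\parallel\varphi\parallel_{\beta}\le\sup\vert\varphi\vert<\infty$ because $V\ge0$. Hence $\{T_n\varphi\}$ is Cauchy in $\mathbb{R}$, and we set $T\varphi\coloneqq\lim_n T_n\varphi$. If $\parallel\varphi\parallel_{\beta}=0$, then $\varphi\equiv0$, and $T_n0=0$ for every $n$, so $T0=0$.

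\textbf{Step 2: $T\in\mathcal{M}$.} Each axiom of a sublinear expectation is an inequality or equality that involves finitely many evaluations $T_n\psi$, and each such evaluation converges. So each axiom passes to the limit:
\begin{itemize}
\item monotonicity, from $T_n\varphi\le T_n\psi$ whenever $\varphi\le\psi$;
\item constant preservation, from $T_nc=c$;
\item sub-additivity, from $T_n(\varphi+\psi)\le T_n\varphi+T_n\psi$;
\item positive homogeneity, from $T_n(\lambda\varphi)=\lambda T_n\varphi$ for $\lambda\ge0$.
\end{itemize}

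\textbf{Step 3: convergence in the metric.} Given $\varepsilon>0$, choose $N$ such that $\rho_{\beta}(T_n,T_m)<\varepsilon$ for all $n,m\ge N$. Then for every $\varphi$ with $\parallel\varphi\parallel_{\beta}\le1$,
\begin{equation*}
\vert T_n\varphi-T_m\varphi\vert<\varepsilon .
\end{equation*}
Letting $m\to\infty$ for this fixed $\varphi$ gives $\vert T_n\varphi-T\varphi\vert\le\varepsilon$. This bound is uniform over the unit ball, so taking the supremum yields $\rho_{\beta}(T_n,T)\le\varepsilon$ for all $n\ge N$. Thus $T_n\to T$ in $\mathcal{M}$.

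No step is a real obstacle. The only point needing care is Step 3: the $m\to\infty$ limit must be taken for each fixed $\varphi$ before the supremum is taken, which is valid because the bound $\varepsilon$ does not depend on $\varphi$. Finiteness of $\rho_{\beta}$ itself also holds: for $\parallel\varphi\parallel_{\beta}\le1$ we have $\vert\varphi(x)\vert\le1+\beta V(x)$, but a sublinear expectation only satisfies $\vert T\varphi\vert\le\sup\vert\varphi\vert$, which may exceed $1$. So $\rho_{\beta}$ could take the value $+\infty$ for some pairs, and the argument above accommodates this. A Cauchy sequence has finite distances from some index on, and the proof uses only those terms.
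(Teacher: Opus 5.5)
Your proof is correct, and it takes a different, more complete route than the paper's. The paper never starts from a Cauchy sequence. It assumes a limit $T_{\infty}$ with $\rho_{\beta}(T_n,T_{\infty})\to 0$ is already given, and checks that $T_{\infty}$ inherits positive homogeneity, sub-additivity and constant preservation. Each time it rescales test functions into the unit ball $\{\parallel\varphi\parallel_{\beta}\le 1\}$, since only there does the metric control $T_{\infty}$. Homogeneity has to be proved first so that the ratio form of $\rho_{\beta}$ can be used in the constant-preserving step, and monotonicity is never checked. In effect the paper only shows that $\mathcal{M}$ is closed under $\rho_{\beta}$-limits inside some unspecified ambient space. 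Your argument defines $T\varphi=\lim_n T_n\varphi$ for every $\varphi\in B_b(\mathbb{R}^d)$ via $\parallel\varphi\parallel_{\beta}\le\sup\vert\varphi\vert$, passes all four axioms to the limit pointwise, and upgrades to $\rho_{\beta}(T_n,T)\to 0$ by the uniform Cauchy bound. It therefore actually produces the limit, covers monotonicity, and avoids the order-sensitive scaling bookkeeping. You also observe that $\rho_{\beta}$ can be $+\infty$ on $\mathcal{M}$, for example between $\varphi\mapsto\sup_x\varphi(x)$ and $T_{\delta_0}$ when $V$ is unbounded, tested on $\min(1+\beta V,M)$. So $\mathcal{M}$ is really an extended metric space; the paper passes over this, and your argument handles it correctly. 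Your transfer to $d_{\beta}$ through $d_{\beta}=\rho_{\beta}$ is the same as the paper's.
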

\begin{proof}
  Assume $T_n \in \mathcal{M}$, $n \in \mathbb{N}$ and there exists $T_{\infty}$ such that $\rho_{\beta}(T_n, T_{\infty}) \rightarrow 0$, as $n \rightarrow \infty$. Now we prove that $T_{\infty}$ is an SE operator.
  
  First, we check the positive homogeneity. Consider a constant $0<c \leq 1$ and $\varphi \in B_b(\mathbb{R}^d)$ with $\parallel \varphi \parallel_{\beta} \leq 1$, then,
  \begin{equation*}
    \begin{split}
       & \vert T_{\infty}(c\varphi)-cT_{\infty}\varphi\vert \\
         \leq &\vert T_{\infty}(c\varphi)-T_n(c\varphi)\vert + \vert c T_n\varphi-cT_{\infty}\varphi\vert \\
        \leq  &  (1+c)\rho_{\beta}(T_n, T_{\infty}) \  \rightarrow 0, \ as \   n\rightarrow \infty.
    \end{split}
  \end{equation*}
  The third line holds because $\parallel c\varphi \parallel_{\beta}= c \parallel\varphi\parallel_{\beta} \leq 1$.
  Thus, $T_{\infty}(c\varphi) = cT_{\infty}\varphi$. For $c>1$, consider $\frac{1}{c}T_{\infty}(c\varphi)=T_{\infty}(\frac{1}{c}c\varphi)=T_{\infty}\varphi$.
  So, $cT_{\infty}\varphi = T_{\infty}(c\varphi)$ for all $c \geq 0$. With this multiplication result, it can easily be extended to any $\varphi$ with $\parallel\varphi\parallel_{\beta} < \infty$. So $T_{\infty}$ satisfies the positive homogeneity.
  
  Second, we check the sub-additivity. For $\varphi_1$, $\varphi_2 \in B_b(\mathbb{R}^d)$ with $\parallel \varphi_1 \parallel_{\beta} \leq 1$ and $\parallel \varphi_2 \parallel_{\beta} \leq 1$, note that
  \begin{equation*}
       T_{\infty}(\varphi_1+\varphi_2)
      \leq  T_{\infty}(\varphi_1+\varphi_2) -T_n(\varphi_1+\varphi_2)+ T_n\varphi_1+T_n\varphi_2.
  \end{equation*}
  Then
  \begin{equation*}
    \begin{split}
        &T_{\infty}(\varphi_1+\varphi_2)- T_{\infty}\varphi_1- T_{\infty}\varphi_2 \\
        \leq &2\vert T_{\infty}(\frac{1}{2}(\varphi_1+\varphi_2))-T_{n}(\frac{1}{2}(\varphi_1+\varphi_2))\vert + \vert T_n\varphi_1- T_{\infty}\varphi_1 \vert + \vert T_n\varphi_2- T_{\infty}\varphi_2\vert  \\
        \leq & 4\rho_{\beta}(T_n, T_{\infty})\ \rightarrow 0, as \   n\rightarrow \infty.
    \end{split}
  \end{equation*}
    The above inequality holds because $\parallel \frac{1}{2}(\varphi_1+\varphi_2)\parallel_{\beta} \leq 1$. It follows that $T_{\infty}(\varphi_1+\varphi_2) \leq T_{\infty}\varphi_1+ T_{\infty}\varphi_2$ if $\parallel \varphi_1 \parallel_{\beta} \leq 1$ and $\parallel \varphi_2 \parallel_{\beta} \leq 1$. By the positive homogeneity of $T_{\infty}$, this result can be extended to all $\varphi_1,\ \varphi_2 \in B_b(\mathbb{R}^d)$. 

Finally, we check the constant preserving property. For any constant $c$, $\varphi \in B_b(\mathbb{R}^d)$,
\begin{equation*}
  \begin{split}
    \vert T_{\infty}(c+\varphi)-T_{\infty}\varphi-c\vert \leq \vert T_{\infty}(c+\varphi)- T_{n}(c+\varphi)\vert +\vert T_{n}\varphi-T_{\infty}\varphi \vert.
  \end{split}
\end{equation*}
And $\frac{\vert T_{\infty}(c+\varphi)- T_{n}(c+\varphi)\vert}{\parallel\varphi+c\parallel_{\beta}}\leq \underset{\varphi:\parallel\varphi\parallel_{\beta}\neq 0}{{\rm sup}}\frac{\vert \bar{T}\varphi-\tilde{T}\varphi\vert}{\parallel\varphi\parallel_{\beta}}=\rho_{\beta}(T_n, T_{\infty})\rightarrow 0 $, $\vert T_{n}\varphi-T_{\infty}\varphi \vert \leq \rho_{\beta}(T_n, T_{\infty})\rightarrow 0$, as $n \to\infty$. So we have $ T_{\infty}(c+\varphi) = T_{\infty}\varphi+c$.

In summary, we prove that $T_{\infty}$ is an SE operator thus $\mathcal{M}$ is a complete metric space under $\rho_{\beta}$. By the relationship of $\rho_{\beta}$ and $d_{\beta}$, $\mathcal{M}$ is also a complete space under $d_{\beta}$.
\end{proof}

\begin{proposition}\label{procha3}
  If Assumption \ref{Ass1} and \ref{Ass2} hold, then there exist constants $\bar{\alpha} \in (0,1)$, $\beta > 0$ and $n \in \mathbb{N}$, such that for all $k\geq n$,
  \begin{equation}\label{Thconstraction}
     \normmm{\bar{T}^k \varphi}_{\beta}\leq \bar{\alpha} \normmm{\varphi}_{\beta}.
  \end{equation}
\end{proposition}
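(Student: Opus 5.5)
The plan is to adapt the Hairer--Mattingly proof of Harris' theorem. Linearity is replaced by the two one-sided consequences of sublinearity: $\bar T^k(\varphi+\psi)\le \bar T^k\varphi+\bar T^k\psi$, and the lower bound $\bar T^k\varphi\ge -\bar T^k(-\varphi)$.

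\emph{Reductions.} By positive homogeneity and $\bar T^k(\varphi+a)=\bar T^k\varphi+a$, and since $\normmm{\varphi}_\beta=\inf_a\|\varphi+a\|_\beta$, it suffices to take $\varphi$ with $|\varphi(x)|\le 1+\beta V(x)$ for all $x$. For such $\varphi$ I will prove
\begin{equation*}
|\bar T^k\varphi(x)-\bar T^k\varphi(y)|\le \bar\alpha\, d_\beta(x,y)\qquad\text{for all } x\neq y,\ k\ge n .
\end{equation*}
Swapping $x$ and $y$ gives the absolute value, so only the upper bound on $\bar T^k\varphi(x)-\bar T^k\varphi(y)$ is needed.

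Next I iterate Assumption \ref{Ass1}, using monotonicity, sub-additivity, positive homogeneity, constant preservation and the semigroup property. This gives $\bar T^kV\le \gamma^kV+K/(1-\gamma)$. Since $V$ is unbounded, I will apply $\bar T$ to $V\wedge N$ and let $N\to\infty$, or read Assumption \ref{Ass1} as already extended to $V$. Put $\gamma_0=\gamma^k\le\gamma$ and $K'=K/(1-\gamma)$. Then
\begin{equation*}
\bar T^k(1+\beta V)\le 1+\beta\gamma_0V+\beta K',
\end{equation*}
and $R>2K'$ because $R>2K/(1-\gamma)$.

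\emph{Case 1: $V(x)+V(y)\ge R$.} Monotonicity gives $\bar T^k\varphi(x)\le \bar T^k(1+\beta V)(x)$. Also $-\varphi\le 1+\beta V$, so $\bar T^k\varphi(y)\ge -\bar T^k(-\varphi)(y)\ge -\bar T^k(1+\beta V)(y)$. Hence
\begin{equation*}
\bar T^k\varphi(x)-\bar T^k\varphi(y)\le 2+2\beta K'+\beta\gamma_0\,(V(x)+V(y)).
\end{equation*}
Because $2K'<R\le V(x)+V(y)$, this is at most $2+\beta\gamma_1(V(x)+V(y))$ for some $\gamma_1\in(\gamma_0,1)$. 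That in turn is at most $\alpha_1 d_\beta(x,y)$ with $\alpha_1=\max\{(2+\beta\gamma_1 R)/(2+\beta R),\gamma_1\}<1$, after separating the bound $2$ from the $V$-terms.

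\emph{Case 2: $V(x)+V(y)<R$, so $x,y\in C$.} This is the step I expect to be the main obstacle, since Assumption \ref{Ass2} only provides the lower bound $-\bar T^k(-g)\ge\alpha E_\nu g$, and the decomposition has to route every term through that one-sided inequality. Write
\begin{equation*}
h=1+\beta V-\varphi\ge 0,\qquad g=\varphi+1+\beta V\ge 0 .
\end{equation*}
For the upper bound, $\varphi=(1+\beta V)-h$ and sub-additivity give $\bar T^k\varphi(x)\le \bar T^k(1+\beta V)(x)+\bar T^k(-h)(x)\le \bar T^k(1+\beta V)(x)-\alpha E_\nu h$. For the lower bound, $-\varphi=(1+\beta V)-g$ gives $\bar T^k(-\varphi)(y)\le \bar T^k(1+\beta V)(y)-\alpha E_\nu g$, so $\bar T^k\varphi(y)\ge -\bar T^k(1+\beta V)(y)+\alpha E_\nu g$. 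Since $g+h=2(1+\beta V)$, we have $E_\nu(g+h)\ge 2$, and therefore
\begin{equation*}
\bar T^k\varphi(x)-\bar T^k\varphi(y)\le 2(1-\alpha)+\beta(\gamma_0R+2K').
\end{equation*}
Here $g$ and $h$ are unbounded, so I will apply Assumption \ref{Ass2} to the bounded functions $h\wedge N$ and $g\wedge N$ and use monotonicity together with monotone convergence for $E_\nu$. This only requires $E_\nu$ of the truncations, so finiteness of $E_\nu V$ is not needed.

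\emph{Choice of constants.} Choose $\beta>0$ small enough that $\beta(\gamma_0R+2K')\le\alpha$. Then the Case 2 bound is at most $2-\alpha\le(1-\alpha/2)\,d_\beta(x,y)$, since $d_\beta(x,y)\ge 2$. With this $\beta$, $\bar\alpha=\max\{\alpha_1,1-\alpha/2\}<1$ works for every $k\ge n$. All the constants are uniform in $k$ because $\gamma^k\le\gamma$.
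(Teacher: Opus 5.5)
Your argument is correct up to one fixable slip, and in Case 2 it takes a genuinely different route from the paper.

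\emph{The slip is in Case 1.} You replace the iterated constant $K_k=\frac{1-\gamma^k}{1-\gamma}K$ by the cruder $K'=K/(1-\gamma)$. After that, a $\gamma_1<1$ exists only if $\gamma^k+2K'/R<1$. The hypothesis $R>2K/(1-\gamma)$ does not give this for small $k$: for example $\gamma=\tfrac12$, $K=1$, $R=5$, $k=1$ gives $1.3$. So the remark that everything is uniform ``because $\gamma^k\le\gamma$'' is not right as stated. There are two easy repairs.
\begin{itemize}
\item Keep $K_k$, as the paper does. With $c=\frac{2K}{(1-\gamma)R}<1$ you get $\gamma^k+2K_k/R=c+\gamma^k(1-c)\le\gamma+2K/R<1$, uniformly in $k\ge 1$.
\item Enlarge $n$ so that $\gamma^n+2K'/R<1$. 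The statement lets you choose $n$.
\end{itemize}

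\emph{Comparison with the paper.} Apart from this, Case 1 is the paper's argument.

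In Case 2 the paper imitates the Hairer--Mattingly remainder kernel $(P-\alpha\nu)/(1-\alpha)$. It does this through the superlinear operator $\tilde T^k\varphi=-\frac{1}{1-\alpha}\bar T^k(-\varphi)-\frac{\alpha}{1-\alpha}E_\nu\varphi$, checks its monotonicity on $C$, and bounds $|\tilde T^k(-\varphi)(x)-\tilde T^k(-\varphi)(y)|$ by $\tilde T^k|\varphi|(x)+\tilde T^k|\varphi|(y)$.

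You never build this operator. You bound $\bar T^k\varphi(x)$ from above and $\bar T^k\varphi(y)$ from below separately. Each time you peel off $\alpha E_\nu$ of a nonnegative remainder, $h=1+\beta V-\varphi$ or $g=1+\beta V+\varphi$. Because $g+h=2(1+\beta V)$, you always collect the full $2\alpha$. Both routes arrive at the same estimate $2(1-\alpha)+\beta(\bar T^kV(x)+\bar T^kV(y))$.

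\emph{What your route buys.}
\begin{itemize}
\item Every step uses an inequality in the direction that sublinearity actually provides. The paper's bound $|\tilde T^k(-\varphi)|\le\tilde T^k|\varphi|$ would require $-\tilde T^k(-\varphi)\le\tilde T^k|\varphi|$. That is the reverse of what superadditivity gives: for $\varphi\ge 0$ it amounts to $\bar T^k\varphi\le-\bar T^k(-\varphi)$, which fails whenever there is genuine uncertainty.
\item Your $\beta$ is chosen independently of $k$. The paper's $\beta=\frac{\alpha_0(1-\gamma)}{K(1-\gamma^k)}$ varies with $k$. So your version, once Case 1 is repaired, matches the quantifier order of the statement directly.
\end{itemize}

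The paper's route, in exchange, keeps the formal analogy with the linear Harris argument.
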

\begin{proof}
   Fix a function $\varphi \in B_b(\mathbb{R}^d)$ with $\normmm{\varphi}_{\beta} \leq 1$. According to the relationship of the two norms, we can assume, without losing any generality, that $\parallel{\varphi}\parallel_{\beta} \leq 1$. By definition, we only need to prove that there exists $\bar{\alpha}<1$, such that
  \begin{equation}\label{1leq}
    \vert(\bar{T}^k \varphi)(x)-(\bar{T}^k \varphi)(y) \vert \leq \bar{\alpha}d_{\beta}(x,y).
  \end{equation}
  
 When $x=y$, the proof of (\ref{1leq}) is trivial. 
 
 Now we consider the case when $x \neq y$ and $V(x)+V(y)\geq R$. First, by the assumption that $\parallel{\varphi}\parallel_{\beta} \leq 1$,
 \begin{equation}\label{xx}
    \vert\varphi (x) \vert \leq 1+\beta V(x).
\end{equation} 
 
 By Assumption \ref{Ass1},
 \begin{equation} \label{xxa}
   (\bar{T}^k V)(x)\leq \tilde{\gamma}V(x)+ \tilde{K},
 \end{equation}
 where for ease of notation, we set $\tilde{\gamma}=\gamma^k$ and $\tilde{K}=(1+\gamma+\ldots+\gamma^{k-1})K=\frac{1-\gamma^k}{1-\gamma}K$.

 Set $\gamma_0=\tilde{\gamma}+\frac{2\tilde{K}}{R}$ then it is easy to see that, $\gamma_0 <1$
and set $\gamma_1=\frac{2+\beta R \gamma_0}{2+\beta R}$, for $\beta >0$, $R >0$, one has $\gamma_0 < \gamma_1<1$. Then using (\ref{xx}), (\ref{xxa}) and the definition of $\gamma_1$, we have
 \begin{equation*}
 \begin{split}
    & \vert(\bar{T}^k\varphi(x))-(\bar{T}^k\varphi(y)) \vert 
      \leq  (\bar{T}^k  \vert \varphi  \vert)(x)+(\bar{T}^k  \vert \varphi  \vert)(y) \\
      \leq &  (\bar{T}^k(1+\beta V))(x)+(\bar{T}^k(1+\beta V))(y) \\
      \leq & 2+ \beta \tilde{\gamma}V(x)+\beta \tilde{\gamma}V(y)+2\beta \tilde{K} \\
     =  & 2+\beta(\tilde{\gamma}+\frac{2\tilde{K}}{R})V(x)+\beta(\tilde{\gamma}+\frac{2\tilde{K}}{R})V(y)+2\beta \tilde{K}(1-\frac{V(x)+V(y)}{R}) \\
      \leq & 2+\beta \gamma_0 V(x)+\beta \gamma_0 V(y) \\
      = & 2\gamma_1+2(1-\gamma_1)+\beta \gamma_1 V(x)+\beta \gamma_1 V(y)-\beta(\gamma_1-\gamma_0)(V(x)+V(y))\\
     \leq  &2\gamma_1+\beta \gamma_1(V(x)+V(y))+2(1-\gamma_1)-\beta R(\gamma_1-\gamma_0)\\
     = & \gamma_1 d_{\beta}(x,y).
  \end{split}
 \end{equation*}
In the above proof, $\gamma_1$ depends on $\beta$ which is carefully chosen. The parameter $\beta$ will be determined later.

Next, we consider the case of $x$ and $y$ such that $x \neq y$ and $V(x)+V(y)\leq R$, that is, $x,y \in C$. In this part, we define a new nonlinear expectation operator as follows, for any $x\in C$, $\varphi \in B_b(\mathbb{R}^d)$,
\begin{equation}\label{xy}
  (\tilde{T}^k \varphi)(x) = -\frac{1}{1-\alpha}(\bar{T}^k (-\varphi))(x)-\frac{\alpha}{1-\alpha}E_{\nu}[\varphi].
\end{equation}
We can infer that if $\varphi \geq 0$, by Assumption \ref{Ass2}, $\underset{x\in C}{{\rm inf}}\tilde{T}^k \varphi\geq 0$. Next, we will prove that $\tilde{T}^k$ is a superlinear expectation operator.

First, we show the supadditivity. In fact, for any $x \in C$, $\varphi_1, \varphi_2 \in B_b(\mathbb{R}^d)$,
\begin{equation*}
  \begin{split}
     &\ (\tilde{T}^k(\varphi_1+\varphi_2))(x)
     =  -\frac{1}{1-\alpha}(\bar{T}^k(-\varphi_1-\varphi_2))(x)-\frac{\alpha}{1-\alpha}E_{\nu}[\varphi_1+\varphi_2] \\
     \geq &  -\frac{1}{1-\alpha}(\bar{T}^k(-\varphi_1)+\bar{T}^k(-\varphi_2))(x)-\frac{\alpha}{1-\alpha}E_{\nu}[\varphi_1+\varphi_2]\\
      = & (\tilde{T}^k\varphi_1)(x)+(\tilde{T}^k\varphi_2)(x).
  \end{split}
\end{equation*}

Secondly, when $\varphi_2 \geq \varphi_1$, $\tilde{T}^k (\varphi_2-\varphi_1)\geq 0$. So for any $x \in C$,
\begin{equation*}
    (\tilde{T}^k\varphi_2)(x)  \geq (\tilde{T}^k(\varphi_2-\varphi_1))(x)+(\tilde{T}^k \varphi_1)(x) 
      \geq (\tilde{T}^k \varphi_1)(x).
\end{equation*}

The constant preserving property and positive homogeneity are easy to verify.
Thus, we have verified that $\tilde{T}^k$ is a superlinear expectation operator.

Now, using the definition of $\tilde{T}$, its monotonicity, constant preserving property and (\ref{xxa}), noting that with $\parallel{\varphi}\parallel_{\beta} \leq 1$ and $x,y \in C$, we can get the following estimates,
\begin{equation}\label{TP}
  \begin{split}
     & \vert(\bar{T}^k \varphi)(x)-(\bar{T}^k \varphi)(y) \vert 
    =  (1-\alpha) \vert(\tilde{T}^k(- \varphi))(x)-(\tilde{T}^k (- \varphi))(y) \vert \\
     \leq&(1-\alpha)((\tilde{T}^k \vert \varphi \vert)(x)+(\tilde{T}^k \vert \varphi \vert)(y))\\
 \leq &2(1-\alpha)+(1-\alpha)\beta (\tilde{T}^k V)(x)+(1-\alpha)\beta (\tilde{T}^k V)(y) \\
      \leq&2(1-\alpha)+ \beta ((\bar{T}^k V)(x)+\beta (\bar{T}^k V)(y)) \\
  \leq &2(1-\alpha)+\beta \tilde{\gamma}(V(x)+V(y))+2\beta \tilde{K} \\
    =  & \beta \tilde{\gamma}(V(x)+V(y))+2(1-\alpha+\alpha_0) \\
  \leq &\gamma_2 d_{\beta}(x,y),
  \end{split}
\end{equation}
where $\alpha_0 \in (0,\alpha)$ and we can choose $\beta = \frac{\alpha_0}{\tilde{K}}=\frac{\alpha_0}{K}\frac{1-\gamma}{1-\gamma^k} > 0$ . In the last line, we set $\gamma_2={\rm max}((1-\alpha+\alpha_0),\tilde{\gamma})$ and $\gamma_2 \in(0,1)$.
Finally, we set $\bar{\alpha} = \gamma_1 \vee \gamma_2$ and finish the proof. 
\end{proof}
\begin{remark}
  One can not replace the definition of $\tilde{T}^k$ in (\ref{xy}) by the following definition,
\begin{equation} \label{xya}
 (\tilde{T}^k \varphi)(x) = \frac{1}{1-\alpha}(\bar{T}^k (\varphi))(x)-\frac{\alpha}{1-\alpha}E_{\nu}[\varphi].
\end{equation}
If this definition were used, one would be able to replace Assumption \ref{Ass2} by a weaker assumption, there exists a constant $\alpha$ and a probability measure $\nu$ such that for any $\varphi \in B_b(\mathbb{R}^d)$, $\varphi  
 \geq 0$,
\begin{equation} \label{xyc}
 \underset{x \in C}{{\rm inf}} (\bar{T}^k\varphi)(x)\geq \alpha E_{\nu}[\varphi].
\end{equation}
But if $\tilde{T}^k$ were defined by (\ref{xya}), one would not have the desired monotonicity, as if $\varphi_2 \geq \varphi_1$, $\tilde{T}^k\varphi_2 \geq \tilde{T}^k\varphi_1$, one would only have $ \tilde{T}^k\varphi_2 = \tilde{T}^k(\varphi_2+\varphi_1-\varphi_1)\leq \tilde{T}^k\varphi_1+\tilde{T}^k(\varphi_2-\varphi_1)$.
This would not lead to $\tilde{T}^k\varphi_2 \geq \tilde{T}^k\varphi_1$. This property is used in (\ref{TP}) as one of the key steps in the proof.  Thus, the construction of a supper-additive operator $\tilde{T}^k$ rather than a sub-additive one is crucial in our approach to obtain the contraction of the sublinear operator $\bar{T}^k$ in Proposition \ref{procha3}.
\end{remark}

Denote $(\bar{T}^k)^*T=T\bar{T}^k$, for any $T,\bar{T} \in \mathcal{M}$.
\begin{lemma}\label{anewlemma}
    Under the same conditions as in Proposition \ref{procha3}, there exists a constant $n \in \mathbb{N}$, such that for any $k \geq n$, any SE operators $\hat{T},\ \check{T} \in \mathcal{M}$,
    \begin{equation}
       d_{\beta}((\bar{T}^k)^*\hat{T},(\bar{T}^k)^*\check{T}) \leq \bar{\alpha}d_{\beta}(\hat{T},\check{T}).
    \end{equation}
\end{lemma}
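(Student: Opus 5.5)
The proof will be a direct duality argument that pushes the contraction of Proposition \ref{procha3} through the definition of $d_\beta$ on $\mathcal{M}$. Fix $k\ge n$ and $\hat T,\check T\in\mathcal{M}$. By definition,
\begin{equation*}
d_{\beta}((\bar{T}^k)^*\hat{T},(\bar{T}^k)^*\check{T})=\sup_{\varphi:\normmm{\varphi}_{\beta}\le 1}\vert \hat T(\bar T^k\varphi)-\check T(\bar T^k\varphi)\vert .
\end{equation*}
For each such $\varphi$ set $\psi=\bar T^k\varphi$. Proposition \ref{procha3} gives $\normmm{\psi}_\beta\le\bar\alpha\normmm{\varphi}_\beta\le\bar\alpha$. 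I would then like to conclude
\begin{equation*}
\vert\hat T\psi-\check T\psi\vert\le \normmm{\psi}_\beta\, d_\beta(\hat T,\check T)\le\bar\alpha\, d_\beta(\hat T,\check T),
\end{equation*}
and taking the supremum over $\varphi$ would finish the proof.

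The middle inequality is a homogeneity statement for $d_\beta$. If $\normmm{\psi}_\beta=0$, then $\psi$ is constant (since $d_\beta(x,y)\ge 2$ for $x\neq y$). Constant preservation then gives $\hat T\psi=\check T\psi$, so the bound holds trivially. Otherwise put $\lambda=\normmm{\psi}_\beta>0$ and $\eta=\psi/\lambda$, so that $\normmm{\eta}_\beta=1$. Positive homogeneity of both $\hat T$ and $\check T$ gives $\vert\hat T\psi-\check T\psi\vert=\lambda\vert\hat T\eta-\check T\eta\vert\le\lambda d_\beta(\hat T,\check T)$. This is the representation of $d_\beta$ by ratios mentioned after its definition.

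The main obstacle is the following technical point. In the proof of Proposition \ref{procha3}, $\beta$ was chosen as $\alpha_0/\tilde K$, and $\tilde K$ depends on $k$. So the constants $\beta$ and $\bar\alpha$ in \eqref{Thconstraction} must be the same ones used in the metric $d_\beta$ of the present lemma. I will read the statement with $\beta$ fixed for the given $k$, exactly as produced by Proposition \ref{procha3}.

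If uniformity in $k\ge n$ is wanted, I would note that $\tilde K=\frac{1-\gamma^k}{1-\gamma}K\le \frac{K}{1-\gamma}$ and $\tilde\gamma=\gamma^k\le\gamma^n$. Choosing $\beta=\alpha_0(1-\gamma)/K$ then makes every estimate in that proof hold uniformly in $k\ge n$:
\begin{itemize}
\item the bound $2\beta\tilde K\le 2\alpha_0$ still holds;
\item $\gamma_0\le\gamma^n+2K/((1-\gamma)R)<1$, using $R>2K/(1-\gamma)$ when $n$ is large enough.
\end{itemize}
This gives a single $\bar\alpha$ valid for all $k\ge n$.

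Beyond this, I only need that $\bar T^k\varphi$ lies in $B_b(\mathbb{R}^d)$, so that $\hat T$ and $\check T$ can be applied to it. This holds by the standing assumption that $\bar T^k$ maps $B_b(\mathbb{R}^d)$ into itself.
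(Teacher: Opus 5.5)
Your proposal is correct and follows essentially the same argument as the paper. The paper writes $d_\beta$ in its ratio form and uses $\normmm{\bar T^k\varphi}_\beta\le\bar\alpha\normmm{\varphi}_\beta$ from Proposition \ref{procha3} to bound the denominator, which is exactly your rescaling $\eta=\psi/\normmm{\psi}_\beta$ combined with positive homogeneity of $\hat T,\check T$. Two points in your version go beyond the paper and are legitimate refinements: you handle the case $\normmm{\bar T^k\varphi}_\beta=0$, where the paper's ratio would divide by zero, and you fix the $k$-dependence of $\beta=\alpha_0/\tilde K$ in the proof of Proposition \ref{procha3} by taking $\beta=\alpha_0(1-\gamma)/K$.
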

\begin{proof}
  For any SE operators $\hat{T},\ \check{T}\in \mathcal{M}$, using an alternative representation $\varphi\geq 0$, we choose $n$ as in Proposition \ref{procha3}, for any $k \geq n $,
\begin{equation*}
  \begin{split}
     d_{\beta}((\bar{T}^k)^*\hat{T},(\bar{T}^k)^*\check{T})
     =&\underset{\varphi:\normmm{\varphi}_{\beta}\neq 0}{{\rm sup}}\frac{\vert (\bar{T}^k)^*\hat{T}\varphi-(\bar{T}^k)^*\check{T}\varphi\vert}{\normmm{\varphi}_{\beta}}\\
     = & \bar{\alpha}(\underset{\varphi:\normmm{\varphi}_{\beta}\neq 0}{{\rm sup}}\frac{\vert\hat{T}\bar{T}^k\varphi-\check{T}\bar{T}^k\varphi\vert}{\bar{\alpha}\normmm{\varphi}_{\beta}})\\
      \leq   & \bar{\alpha}(\underset{\varphi:\normmm{\varphi}_{\beta}\neq 0}{{\rm sup}}\frac{\vert\hat{T}\bar{T}^k\varphi-\check{T}\bar{T}^k\varphi \vert}{\normmm{\bar{T}^k\varphi}_{\beta}})\\
     \leq   & \bar{\alpha}(\underset{\varphi:\normmm{\varphi}_{\beta}\neq 0}{{\rm sup}}\frac{\vert\hat{T}\varphi-\check{T}\varphi\vert}{\normmm{\varphi}_{\beta}})\\
     =  &\bar{\alpha}d_{\beta}(\hat{T},\check{T}).
  \end{split}
\end{equation*}
where $\bar{\alpha}$ is the same as the one in Proposition \ref{procha3}. The third line follows from the result of Proposition \ref{procha3}. We also have $\rho_{\beta}((\bar{T}^k)^*\hat{T},(\bar{T}^k)^*\check{T})\leq \bar{\alpha}\rho_{\beta}(\hat{T},\check{T})$ by the relationship of $\rho_{\beta}$ and $d_{\beta}$.
\end{proof}

\begin{theorem}\label{disth}
 If Assumption \ref{Ass1} and \ref{Ass2} hold, then there exists $T_{\infty} \in \mathcal{M}$, such that
 \begin{equation}
   \rho_{\beta}((\bar{T}^{k})^*T_{\delta_x},T_{\infty})\rightarrow 0\ \ as\ k\rightarrow \infty,
 \end{equation}
where $T_{\delta_x}\varphi=\varphi(x)$ for any $x \in \mathbb{R}^d$, $\varphi \in B_b(\mathbb{R}^d)$ and $(\bar{T}^k)^{*}T_{\delta_x}=T_{\delta_x}\bar{T}^k$.
\end{theorem}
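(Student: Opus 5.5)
We prove the theorem with a Banach fixed point / Cauchy sequence argument in the complete metric space $(\mathcal{M},d_\beta)$, using the contraction of Lemma \ref{anewlemma}. Fix $x$ and put $\mu_k\coloneqq(\bar{T}^k)^*T_{\delta_x}=T_{\delta_x}\bar{T}^k$. By the semigroup property, $(\bar{T}^{k})^*(\bar{T}^{l})^*T=T\bar{T}^l\bar{T}^k=T\bar{T}^{k+l}$. Hence $\mu_{k+l}=(\bar{T}^k)^*\mu_l$ and $\mu_{m+k}=(\bar{T}^k)^*\mu_m$.

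\emph{Step 1 (contraction along blocks).} Fix $n$ from Proposition \ref{procha3}. Any $k\ge n$ can be written as $k=jn+r$ with $j\ge1$ and $n\le r<2n$; if $j\ge1$ we may instead use one block of length $r\ge n$ followed by $j-1$ blocks of length $n$. Applying Lemma \ref{anewlemma} repeatedly, with each block of length at least $n$, gives for all $m\ge0$
\[
d_\beta(\mu_{k+m},\mu_k)=d_\beta\bigl((\bar{T}^{k})^*\mu_m,(\bar{T}^{k})^*\mu_0\bigr)\le \bar{\alpha}^{\lfloor k/n\rfloor}\, d_\beta(\mu_m,\mu_0).
\]
One caveat: the constant $\beta$ in Proposition \ref{procha3} was chosen depending on $k$ through $\tilde K$. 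We therefore fix $\beta$ once, using block length $n$. A small check shows that the same $\beta$ works for every $k\ge n$. This needs a $k$-uniform choice, e.g. $\beta=\alpha_0(1-\gamma)/K$ together with $\tilde K\le K/(1-\gamma)$, so that the condition $2(1-\alpha)+2\beta\tilde K\le 2(1-\alpha+\alpha_0)$ still holds. Alternatively, we can simply iterate the single operator $\bar{T}^n$ and handle the remainders as below.

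\emph{Step 2 (uniform bound on $d_\beta(\mu_m,\mu_0)$).} This is the main obstacle. Take $\varphi$ with $\normmm{\varphi}_\beta\le1$ and shift it so that $\|\varphi\|_\beta\le1$, i.e. $|\varphi|\le 1+\beta V$. Then
\[
|\mu_m\varphi-\mu_0\varphi|\le (\bar{T}^m(1+\beta V))(x)+1+\beta V(x)\le 2+\beta\Bigl(\gamma^mV(x)+\tfrac{K}{1-\gamma}\Bigr)+\beta V(x),
\]
by Assumption \ref{Ass1} iterated, as in (\ref{xxa}). This bound is uniform in $m$ and finite for the fixed $x$; call it $M_x$. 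One technical point is that $V$ may be unbounded, so $1+\beta V\notin B_b$. We handle this by applying the monotonicity of $\bar{T}^m$ to the truncations $(1+\beta V)\wedge N$, or by reading Assumption \ref{Ass1} as already granting $\bar{T}$ on $V$, as the paper does.

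\emph{Step 3 (Cauchy and limit).} Steps 1–2 give $d_\beta(\mu_{k+m},\mu_k)\le \bar{\alpha}^{\lfloor k/n\rfloor}M_x\to0$ uniformly in $m$. So $(\mu_k)$ is Cauchy in $d_\beta$. Since $\rho_\beta=d_\beta$ on $\mathcal{M}$, it is Cauchy in $\rho_\beta$ as well. By the completeness lemma there is $T_\infty\in\mathcal{M}$ with $\rho_\beta(\mu_k,T_\infty)\to0$.

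The completeness lemma as stated assumes a limit exists. We therefore first construct the candidate pointwise: for each $\varphi$ with $\|\varphi\|_\beta<\infty$, the sequence $\mu_k\varphi$ is Cauchy in $\mathbb{R}$, uniformly on the unit ball. Define $T_\infty\varphi\coloneqq\lim_k\mu_k\varphi$; this gives $\rho_\beta(\mu_k,T_\infty)\to0$, and the lemma then yields $T_\infty\in\mathcal{M}$. Finally, $T_\infty$ is independent of $x$. Indeed, $d_\beta(\mu^x_k,\mu^y_k)\le\bar\alpha^{\lfloor k/n\rfloor}d_\beta(T_{\delta_x},T_{\delta_y})\to0$, which prepares the later invariance statement, although the theorem only asks for existence.
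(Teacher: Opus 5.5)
Your proof is correct, but it takes a different route from the paper's.

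\textbf{The paper's route.} It fixes a single $k\ge n$ and shows that the arithmetic subsequence $(\bar T^{mk})^*T_{\delta_x}$, $m\to\infty$, is Cauchy. This uses Lemma \ref{anewlemma} with that one exponent and the same Lyapunov bound as your Step 2. It gets a limit for each $k$, argues that the limit does not depend on $k$, and concludes from $\{mk\}=\{n,n+1,\dots\}$.

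\textbf{Your route.} You prove that the whole sequence $\mu_k$ is Cauchy at once. Your estimate $d_\beta(\mu_{k+m},\mu_k)\le\bar\alpha^{\lfloor k/n\rfloor}M_x$ is uniform in $m$. It comes from writing $\bar T^k$ as blocks of length at least $n$, or as $\lfloor k/n\rfloor$ copies of $\bar T^n$ plus a remainder $r<n$, which is absorbed through $d_\beta(\mu_{m+r},\mu_r)\le 2M_x$.

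\textbf{What your route buys.} It avoids the two weakest steps of the paper's argument.
\begin{itemize}
\item The $k$-independence step uses $\|\bar T^{mk}\varphi\|_\beta\le\bar\alpha^m\|\varphi\|_\beta$. Proposition \ref{procha3} does not give this, since the contraction holds only for the seminorm $\normmm{\cdot}_\beta$ and $\bar T^{mk}$ fixes constants.
\item Convergence along every arithmetic subsequence does not by itself give convergence of the full sequence unless there is a rate uniform over the residues.
\end{itemize}
The paper's version is shorter on the page, since Lemma \ref{anewlemma} is only ever used at one fixed exponent. As written, though, it needs repairs of the kind your argument supplies, for instance identifying the limits through the common subsequence $mk_1k_2$ and then running a remainder argument.

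\textbf{Your side remarks.} These are also accurate.
\begin{itemize}
\item Proposition \ref{procha3} as proved yields a $k$-dependent $\beta$. Iterating only $\bar T^n$ is the cleaner of your two fixes, since it needs no uniformity in $k$.
\item The paper's completeness lemma only shows that a $\rho_\beta$-limit of sublinear expectations is again one. So building $T_\infty$ pointwise first, as you do, is needed. It also gives monotonicity for free, which that lemma does not check.
\end{itemize}

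\textbf{Minor slip.} Your parametrisation ``$k=jn+r$ with $j\ge1$, $n\le r<2n$'' should read $k=(j-1)n+r$ with $j=\lfloor k/n\rfloor$ and $n\le r<2n$.
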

\begin{proof}

According to Lemma \ref{anewlemma}, for any fixed $k\geq n$, consider subsequence that SE operators to the power of some integer multiples of k, i.e. $\bar{T}^{mk}$ and $\bar{T}^{lk}$, $m,\ l\in \mathbb{N}_{+}$ and $m > l$,
\begin{equation*}
 \rho_{\beta}((\bar{T}^{mk})^*T_{\delta_x},(\bar{T}^{lk})^*T_{\delta_x}) 
 \leq   \bar{\alpha}^l \rho_{\beta}((\bar{T}^{(m-l)k})^*T_{\delta_x},T_{\delta_x}).
\end{equation*}
Furthermore,
\begin{equation*}
  \begin{split}
     \rho_{\beta}((\bar{T}^{(m-l)k})^*T_{\delta_x},T_{\delta_x})
   = & \underset{\varphi:\parallel\varphi\parallel_{\beta}\leq 1}{{\rm sup}}\vert (\bar{T}^{(m-l)k}\varphi)(x)-\varphi(x)\vert \\
\leq & \vert 1+\beta V(x)+(\bar{T}^{(m-l)k}(1+\beta V))(x)\vert\\
\leq & 2+\beta K  \frac{1-\gamma^{(m-l)k}}{1-\gamma}+\beta V(x)+\beta\gamma^{(m-l)k}V(x) < \infty .\\
  \end{split}
\end{equation*}
Thus, $\rho_{\beta}((\bar{T}^{mk})^*T_{\delta_x},(\bar{T}^{lk})^*T_{\delta_x})  \rightarrow 0$, as $m,\ l\rightarrow\infty$. So, under the metric $\rho_{\beta}$,    $(\bar{T}^{mk})^*T_{\delta_x}$ is a Cauchy sequence in $\mathcal{M}$. Therefore, by the completeness of the space $\mathcal{M}$ under $\rho_{\beta}$, there exists $T_{\infty}\in \mathcal{M}$ such that $\rho_{\beta}((\bar{T}^{mk})^*T_{\delta_x},T_{\infty})\ \rightarrow\ 0\ as\ m \rightarrow \infty$.

Note that, generally speaking, $T_{\infty}$ may depend on the choice of $k$. To see that this is independent of $k$, we consider any different $k_1,\ k_2 \geq n$, $m \in \mathbb{N}_+$,
\begin{equation*}
  \begin{split}
     & \rho_{\beta}((\bar{T}^{mk_1})^*T_{\delta_x},(\bar{T}^{mk_2})^*T_{\delta_x}) 
    \leq  \underset{\varphi:\parallel\varphi\parallel_{\beta}\leq 1}{{\rm sup}}(\vert (\bar{T}^{mk_1}\varphi)(x)\vert+ \vert (\bar{T}^{mk_2}\varphi)(x) \vert)\\
    = & (1+\beta V(x))\underset{\varphi:\parallel\varphi\parallel_{\beta}\leq 1}{{\rm sup}}(\frac{\vert (\bar{T}^{mk_1}\varphi)(x)\vert}{1+\beta V(x)}+ \frac{\vert( \bar{T}^{mk_2}\varphi)(x)\vert}{1+\beta V(x)})\\
 \leq & (1+\beta V(x))(\underset{\varphi:\parallel\varphi\parallel_{\beta}\leq 1}{{\rm sup}}(\parallel\bar{T}^{mk_1}\varphi\parallel_{\beta}+\parallel\bar{T}^{mk_2}\varphi\parallel_{\beta}))\\
\leq  & 2\bar{\alpha}^m(1+\beta V(x))\parallel \varphi \parallel_{\beta}\ \rightarrow\ 0 \ as \  m\rightarrow \infty.
  \end{split}
\end{equation*}

Thus, $\bar{T}^{mk} \rightarrow T_{\infty}$ as $m \rightarrow \infty$ for any $k \geq n $, which implies that $T_{\infty}$ is independent of $k$. Note $\{mk: m \in \mathbb{N}, k \geq n\} = \{n, n+1,\ldots\}$, so $ \rho_{\beta}((\bar{T}^k)^*T_{\delta_x},T_{\infty})\rightarrow 0 \ \ \ as\ \ k\rightarrow\infty.$
\end{proof}
\begin{lemma}\label{Lemma3.7}
  If for any $x \in \mathbb{R}^d$, assume $\rho_{\beta}((\bar{T}^k)^*T_{\delta_x},T_{\infty})\rightarrow 0 \ as\  k\rightarrow\infty$, then $T_{\infty}\bar{T}= T_{\infty}$.
\end{lemma}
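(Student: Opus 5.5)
The plan is to show that $T_{\infty}\bar{T}$ and $T_{\infty}$ agree on every $\varphi\in B_b(\mathbb{R}^d)$. I would compare both with the sequence $(\bar{T}^{k+1})^*T_{\delta_x}=T_{\delta_x}\bar{T}^{k+1}$, using the triangle inequality
\begin{equation*}
\vert T_{\infty}\bar{T}\varphi-T_{\infty}\varphi\vert\leq \vert T_{\infty}\bar{T}\varphi-T_{\delta_x}\bar{T}^{k}(\bar{T}\varphi)\vert+\vert T_{\delta_x}\bar{T}^{k+1}\varphi-T_{\infty}\varphi\vert .
\end{equation*}
This rests on the semigroup property $\bar{T}^k\circ\bar{T}=\bar{T}^{k+1}$. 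The second term is at most $\rho_{\beta}((\bar{T}^{k+1})^*T_{\delta_x},T_{\infty})\,\Vert\varphi\Vert_{\beta}$. Since $\varphi$ is bounded, $\Vert\varphi\Vert_{\beta}\leq\sup\vert\varphi\vert<\infty$, so this term tends to $0$ by hypothesis.

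For the first term, I would apply the definition of $\rho_{\beta}$ to the test function $\psi=\bar{T}\varphi$. This gives a bound $\rho_{\beta}((\bar{T}^{k})^*T_{\delta_x},T_{\infty})\,\Vert\bar{T}\varphi\Vert_{\beta}$, and it only requires $\Vert\bar{T}\varphi\Vert_{\beta}<\infty$. That holds because $\bar{T}\varphi\in B_b(\mathbb{R}^d)$: monotonicity and constant preservation give $\vert\bar{T}\varphi\vert\leq\sup\vert\varphi\vert$. So the first term also tends to $0$ as $k\to\infty$.

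Letting $k\to\infty$ then gives $T_{\infty}\bar{T}\varphi=T_{\infty}\varphi$ for every bounded $\varphi$, which is the claim. The limit $T_\infty$ is the same for every $x$, since the statement fixes $T_\infty$ independently of $x$; in any case one fixed $x$ suffices.

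The only delicate point is that $\rho_{\beta}$ is a supremum over the unit ball of $\Vert\cdot\Vert_{\beta}$. Convergence therefore has to be transferred to an arbitrary bounded test function by positive homogeneity: write $\psi=\Vert\psi\Vert_\beta\cdot(\psi/\Vert\psi\Vert_\beta)$ and use $T(c\psi)=cT\psi$ for $c\ge0$, which holds for both $T_\infty$ and $T_{\delta_x}\bar T^k$ since both lie in $\mathcal{M}$. I expect no real obstacle beyond checking this finiteness of the norms, because $\bar{T}$ maps $B_b(\mathbb{R}^d)$ into itself.
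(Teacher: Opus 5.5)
Your proof is correct and follows the paper's argument. Both evaluate $(\bar{T}^{k+1}\varphi)(x)$ in two ways: once as $(\bar{T}^{k+1})^*T_{\delta_x}\varphi\to T_{\infty}\varphi$, and once, via the semigroup property, as $(\bar{T}^{k})^*T_{\delta_x}(\bar{T}\varphi)\to T_{\infty}\bar{T}\varphi$, and then conclude by uniqueness of limits. You also make explicit what the paper leaves implicit: why $\rho_\beta$-convergence gives convergence on the test function $\bar{T}\varphi$ (it is bounded, and positive homogeneity applies). In addition, you drop the paper's unnecessary restriction to $\varphi\geq 0$.
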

\begin{proof}
  For any $\varphi\in B_b(\mathbb{R}^d)$, $\varphi \geq 0$,
\begin{equation*}
(\bar{T}^{k+1})^*T_{\delta_x}\varphi=(\bar{T}^{k+1}\varphi)(x)\rightarrow T_{\infty}\varphi \ \ as\ \ k\rightarrow\infty ,
\end{equation*}
and
\begin{equation*}
     (\bar{T}^{k+1})^*T_{\delta_x}\varphi 
       = (\bar{T}^{k}(\bar{T}\varphi))(x)
       \rightarrow T_{\infty}\bar{T}\varphi \ \ as\ \ k\rightarrow\infty.
\end{equation*}

Thus, $T_{\infty}=T_{\infty}\bar{T}$. 
\end{proof}
\begin{theorem}
  Under Assumption \ref{Ass1} and \ref{Ass2}, SMS $\bar{T}^k$ has a unique ISE $T_{\infty} \in \mathcal{M}$, such that $T_{\infty}=T_{\infty}\bar{T}$.
\end{theorem}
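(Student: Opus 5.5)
Existence is obtained by combining Theorem \ref{disth} with Lemma \ref{Lemma3.7}. Fix any $x\in\mathbb{R}^d$. Under Assumptions \ref{Ass1} and \ref{Ass2}, Theorem \ref{disth} gives $T_\infty\in\mathcal{M}$ with $\rho_\beta((\bar{T}^k)^*T_{\delta_x},T_\infty)\to 0$. Lemma \ref{Lemma3.7} then yields $T_\infty\bar{T}=T_\infty$. Iterating this identity gives $T_\infty\bar{T}^k=T_\infty$ for all $k$, so $T_\infty$ is an ISE.

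The point I would make explicit is that $T_\infty$ does not depend on the starting point $x$. For $x\neq y$ and $k\ge n$, Proposition \ref{procha3} and the identity $\normmm{\varphi}_\beta=\inf_a\|\varphi+a\|_\beta$ give
\[
|(\bar{T}^k\varphi)(x)-(\bar{T}^k\varphi)(y)|\le \bar{\alpha}^{\lfloor k/n\rfloor}\normmm{\varphi}_\beta\, d_\beta(x,y).
\]
This uses the contraction along multiples of a fixed $k_0\ge n$, combined with the fact that $\bar{T}^{r}$ is non-expansive for $\normmm{\cdot}_\beta$ up to a constant. The latter follows from the Lyapunov bound: $\|\bar{T}^r\varphi\|_\beta\le (1+\beta K/(1-\gamma))\|\varphi\|_\beta$. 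Consequently $d_\beta((\bar{T}^k)^*T_{\delta_x},(\bar{T}^k)^*T_{\delta_y})\to 0$, so both sequences have the same limit.

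For uniqueness, suppose $\hat{T}\in\mathcal{M}$ is another ISE, so $\hat{T}\bar{T}^k=\hat{T}$ for all $k$. Then $(\bar{T}^k)^*\hat{T}=\hat{T}$ and $(\bar{T}^k)^*T_\infty=T_\infty$. Applying Lemma \ref{anewlemma} with $m$-fold iteration at a fixed $k\ge n$ gives
\[
d_\beta(\hat{T},T_\infty)=d_\beta((\bar{T}^{mk})^*\hat{T},(\bar{T}^{mk})^*T_\infty)\le\bar{\alpha}^m d_\beta(\hat{T},T_\infty),
\]
which forces $d_\beta(\hat{T},T_\infty)=0$, hence $\hat{T}=T_\infty$ by the metric property.

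The main obstacle is that the last step requires $d_\beta(\hat{T},T_\infty)<\infty$. Since $V$ may be unbounded, this is not automatic for an arbitrary $\hat{T}\in\mathcal{M}$. Because $\normmm{\varphi}_\beta\le1$ allows a shift with $|\varphi+a|\le 1+\beta V$, one gets $d_\beta(\hat{T},T_\infty)\le 2+\beta\hat{T}V+\beta T_\infty V$. It therefore suffices to show $T_\infty V<\infty$ and $\hat{T}V<\infty$, interpreting $V$ through truncations $V\wedge N$ if $V\notin B_b$. For $T_\infty$ this follows from Assumption \ref{Ass1} via $(\bar{T}^kV)(x)\le\gamma^kV(x)+K/(1-\gamma)$ and passage to the limit on $V\wedge N$, giving $T_\infty (V\wedge N)\le K/(1-\gamma)$. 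For $\hat{T}$, invariance together with the same bound gives $\hat{T}(V\wedge N)=\hat{T}\bar{T}^k(V\wedge N)\le \hat{T}(\gamma^k V+K/(1-\gamma))$. I would try to close this by truncation and a monotone limit, letting $k\to\infty$ so that the $\gamma^k$ term vanishes whenever $\hat{T}$ has some finite $V$-moment. If that cannot be guaranteed in general, I would state uniqueness within the class of ISEs with $\hat{T}V<\infty$, which is the natural class for the metric $d_\beta$.
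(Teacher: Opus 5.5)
Your existence argument and your contraction argument for uniqueness follow the paper: Theorem~\ref{disth} with Lemma~\ref{Lemma3.7}, then Lemma~\ref{anewlemma}. Your proof that the limit does not depend on $x$ settles a point the paper does not address. The finiteness issue you flag is a genuine gap, but it is a gap in the paper's proof, not in yours. The paper concludes $\rho_\beta(\tilde{T}_\infty,T_\infty)=0$ from $\rho_\beta(\tilde{T}_\infty,T_\infty)\le\bar{\alpha}\,\rho_\beta(\tilde{T}_\infty,T_\infty)$, which says nothing when the left side is $+\infty$. With $V$ unbounded, $\rho_\beta$ is only an extended metric on $\mathcal{M}$. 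Your estimates $d_\beta(\hat{T},T_\infty)\le 2+\beta\sup_N\hat{T}(V\wedge N)+\beta\sup_N T_\infty(V\wedge N)$ and $T_\infty(V\wedge N)\le K/(1-\gamma)$ are correct.

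Do not spend effort closing the truncation argument for an arbitrary invariant $\hat{T}$. It only gives $\hat{T}(V\wedge N)\le\gamma^k\hat{T}(V\wedge\gamma^{-k}N)+K/(1-\gamma)$, which is circular, and in fact uniqueness over all of $\mathcal{M}$ is false.

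Take the linear chain $(\bar{T}\varphi)(x)=E\varphi(x/2+\xi)$, $\xi\sim N(0,1)$, on $\mathbb{R}$ with $V(x)=x^2$.
Assumption~\ref{Ass1} holds with $\gamma=1/4$, $K=1$. Assumption~\ref{Ass2} holds with $n=1$ and $\nu$ uniform on $[-1,1]$, since $\bar{T}^k(x,\cdot)=N(2^{-k}x,\sigma_k^2)$ with $\sigma_k^2\in[1,4/3]$.

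Now set $\hat{T}\varphi:=\limsup_{R\to\infty}\frac{1}{\log R}\int_1^R\varphi(y)\frac{dy}{y}$. This is a sublinear expectation on $B_b(\mathbb{R})$. The map $x\mapsto x/2+z$ changes these logarithmic averages by $O\big(\|\varphi\|_\infty(1+|z|)/\log R\big)$, so averaging over $z=\xi$ gives $\hat{T}\bar{T}=\hat{T}$. Yet $\hat{T}\mathds{1}_{[-1,1]}=0$, whereas $T_\infty$ is the Gaussian law $N(0,4/3)$, which charges $[-1,1]$. Consistently, $\hat{T}(V\wedge N)=N$ and $\rho_\beta(\hat{T},T_\infty)=\infty$.

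So the theorem holds only in the form of your fallback. $T_\infty$ is the unique ISE among those $\hat{T}$ with $\sup_N\hat{T}(V\wedge N)<\infty$, equivalently $\rho_\beta(\hat{T},T_\infty)<\infty$. For that class your argument is complete.
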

\begin{proof}
We only need to prove the uniqueness.

Assume there exists another $\tilde{T}_{\infty}\in M$ such that $\tilde{T}_{\infty}=\tilde{T}_{\infty}\bar{T}$. Then we have
\begin{equation*}
  \begin{split}
    \rho_{\beta}(\tilde{T}_{\infty},T_{\infty})& = \rho_{\beta}(\tilde{T}_{\infty}\bar{T},T_{\infty}\bar{T}) \\
       & = \rho_{\beta}(\tilde{T}_{\infty}\bar{T}^k,T_{\infty}\bar{T}^k)\\
       & \leq \bar{\alpha}\rho_{\beta}(\tilde{T}_{\infty},T_{\infty}).
  \end{split}
\end{equation*}
Here $k \geq n$ as $n$ appearing in Proposition 3.3. Note that $0<\bar{\alpha}<1$, then \\
$\rho_{\beta}(\tilde{T}_{\infty},T_{\infty})= 0$.
\end{proof}

Now we turn to the continuous case. In this content,
we consider a family of SEs parameterized by $t \in \mathbb{R}^{+}$:
\begin{equation}
    T_t:B_b(\mathbb{R}^d) \to B_b(\mathbb{R}^d),t\geq 0.
\end{equation}
We assume that $T_t$ is an SMS that satisfies the property in Definition \ref{deftt}.


Then we can make similar assumptions on $T_a$ for a certain $a>0$ and obtain similar results on the convergence of the discrete sublinear semigroup $T_a^n$. In the following, we will deduct the convergence of $T_t$ as $t \to \infty$ from the convergence of $T_a^n$. 
\renewcommand{\theassumption}{$A^{\prime}$}
\begin{assumption}\label{Ass3}
  There exists a function $V: \mathbb{R}^d \rightarrow [0,+\infty]$ and constants $K \geq 0$, $\gamma \in (0,1)$, $a > 0$, such that for all $x\in \mathbb{R}^d$,
  \begin{equation}\label{A3e}
    (T_aV)(x) \leq \gamma V(x)+K.
  \end{equation}  
\end{assumption}
\renewcommand{\theassumption}{$A^{\prime\prime}$}
\begin{assumption}\label{Ass3.1}
  There exists a constant $\delta>0$ such that for the same function $V$ and the same constants $a>0$, $K \geq 0$, for all $x\in \mathbb{R}^d$,
  \begin{equation}\label{A3.1e}
    (T_tV)(x) \leq \delta V(x)+K,\ 0 \leq t \leq a.
  \end{equation}
  \end{assumption}
\renewcommand{\theassumption}{$B^{\prime}$}
\begin{assumption}\label{Ass4}
  There exists a constants $\alpha \in (0,1)$ and a probability measure $\nu$, such that for any $\varphi \in B_b(\mathbb{R}^d)$, $\varphi \geq 0$, we have,
  \begin{equation}\label{A4e}
    \underset{x\in C}{{\rm inf}}-(T_a(-\varphi))(x)\geq \alpha E_{\nu}\varphi,
  \end{equation}
  where $C = \{x \in \mathbb{R}^d: V(x)\leq R\}$ for some $R > 2K/(1-\gamma)$, $a$, $K$ and $\gamma$ are the constants in Assumption \ref{Ass3}.
\end{assumption}
\begin{remark}
    In Assumption \ref{Ass3}, the constant $\gamma$, $K$ may depend on the selected time $a$. Since '$a$' is a fixed constant, it will not affect the proof of the subsequent theorems. However, $\gamma$, $K$ must be independent of the initial value $x$. By Proposition \ref{procha3}, if Assumption \ref{Ass3} and \ref{Ass4} hold, then there exist constants $\bar{\alpha} \in (0,1)$, $\beta > 0$, such that
  \begin{equation}\label{cha3pro}
     \normmm{T_a \varphi}_{\beta}\leq \bar{\alpha} \normmm{\varphi}_{\beta}.
  \end{equation}
\end{remark}

\begin{theorem}\label{C3maintheorem}
 If Assumption \ref{Ass3} and \ref{Ass4} hold, then there exists $T_{\infty} \in \mathcal{M}$, such that
 \begin{equation} \label{limitan}
   \rho_{\beta}((T_a^{*})^nT_{\delta_x},T_{\infty})\rightarrow 0\ \ as\ n\rightarrow \infty.
 \end{equation}
Furthermore, if Assumption \ref{Ass3.1} also holds, we can obtain that
 \begin{equation}\label{limittinf}
      \rho_{\beta}((T_t^*T_{\delta_x},T_{\infty})\rightarrow 0\ \ as\ t\rightarrow \infty.
 \end{equation}
\end{theorem}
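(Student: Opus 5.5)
The plan has two parts: first reduce to the discrete result, then pass from the lattice $a\mathbb{N}$ to all $t$.

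\emph{Discrete part.} Set $\bar{T}:=T_a$. By the Chapman formula, $\bar{T}^k=T_{ka}$ is an SMS indexed by $k$. Assumption \ref{Ass3} is exactly Assumption \ref{Ass1} for this $\bar{T}$. Assumption \ref{Ass4} gives Assumption \ref{Ass2} with $k=1$. As recorded in the Remark after Assumption \ref{Ass4}, Proposition \ref{procha3} then yields the contraction $\normmm{T_a\varphi}_\beta\le\bar\alpha\normmm{\varphi}_\beta$, and iterating gives it for every power $T_a^k$, $k\ge1$. Lemma \ref{anewlemma} and the Cauchy-sequence argument of Theorem \ref{disth} therefore go through verbatim with $n=1$. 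We get $T_\infty\in\mathcal{M}$ with $\rho_\beta((T_a^*)^nT_{\delta_x},T_\infty)\to0$, using completeness of $\mathcal{M}$ under $\rho_\beta$. The independence-of-$k$ step is not even needed here, since we only want the sequence $T_{na}$.

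\emph{Continuous part.} For $t\ge0$ write $t=na+s$ with $s\in[0,a)$. Then $T_t^*T_{\delta_x}=T_{\delta_x}T_{na}T_s=(T_a^*)^n(T_s^*T_{\delta_x})$. Set $\mu_s:=T_{\delta_x}T_s\in\mathcal{M}$, i.e. $\mu_s\varphi=(T_s\varphi)(x)$. By Lemma \ref{anewlemma} iterated (equivalently, the contraction of $T_a^n$ in $\normmm{\cdot}_\beta$ together with $\rho_\beta=d_\beta$),
\begin{equation*}
\rho_\beta\big((T_a^*)^n\mu_s,(T_a^*)^nT_{\delta_x}\big)\le\bar\alpha^n\rho_\beta(\mu_s,T_{\delta_x}).
\end{equation*}
Exactly as in the proof of Theorem \ref{disth}, for $\|\varphi\|_\beta\le1$ we have
\begin{equation*}
|(T_s\varphi)(x)-\varphi(x)|\le 2+\beta V(x)+\beta(T_sV)(x).
\end{equation*}
By Assumption \ref{Ass3.1} this is bounded by $2+\beta K+\beta(1+\delta)V(x)$, uniformly in $s\in[0,a]$. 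The triangle inequality then gives
\begin{equation*}
\rho_\beta(T_t^*T_{\delta_x},T_\infty)\le\bar\alpha^n\big(2+\beta K+\beta(1+\delta)V(x)\big)+\rho_\beta\big((T_a^*)^nT_{\delta_x},T_\infty\big).
\end{equation*}
As $t\to\infty$ we have $n\to\infty$, so both terms vanish, proving (\ref{limittinf}).

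\emph{Main obstacle.} The delicate point is bounding $\rho_\beta(\mu_s,T_{\delta_x})$ uniformly in $s$. Without Assumption \ref{Ass3.1} one has no control of $T_sV$ for non-lattice times, and this is precisely why that assumption appears. One also needs $V(x)<\infty$ for the fixed $x$; this holds for $x$ where $V$ is finite, and the statement is understood for such $x$. A secondary check is that applying $\bar T=T_a$ to $V$ (possibly unbounded, not in $B_b$) makes sense. As in the discrete case, I will interpret $T_aV$ via the monotone extension implicit in the assumptions, and use only monotonicity and sublinearity there.
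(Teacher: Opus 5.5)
Your proposal is correct and follows essentially the same route as the paper: (\ref{limitan}) comes from the one-step contraction (\ref{cha3pro}) plus the Cauchy argument of Theorem \ref{disth}, and (\ref{limittinf}) from writing $t=na+s$, contracting $n$ times, and bounding $\rho_\beta(T_s^*T_{\delta_x},T_{\delta_x})$ uniformly in $s\in[0,a)$ via Assumption \ref{Ass3.1}. Your explicit bound $2+\beta K+\beta(1+\delta)V(x)$ makes concrete the paper's unspecified constant $2M(1+\beta V(x))$, and iterating the single-step contraction of $T_a$ is a slightly cleaner way to use Theorem \ref{disth} than appealing to a contraction for all $k\ge n$.
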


\begin{proof}
    The convergence (\ref{limitan}) follows from (\ref{cha3pro}) and Theorem \ref{disth}. 

    Similar to Lemma \ref{anewlemma}, we can prove that for any $0\leq t_1, t_2<a$,
\begin{equation*}
    \rho_{\beta}(T_a^*T_{t_1}^*T_{\delta_x},T_a^*T_{t_2}^*T_{\delta_x}) \leq \bar{\alpha}  \rho_{\beta}(T_{t_1}^*T_{\delta_x},T_{t_2}^*T_{\delta_x}).
\end{equation*}
Moreover, by Assumption \ref{Ass3.1}, we can prove that $\rho_{\beta}(T_{t_1}^*T_{\delta_x},T_{t_2}^*T_{\delta_x})$ is bounded for any $0\leq t_1, t_2<a$. For this, there exists a constant $M>0$, such that 
\begin{equation*}
   \begin{split}
       &\rho_{\beta}(T_{t_1}^*T_{\delta_x},T_{t_2}^*T_{\delta_x}) 
    = \underset{\varphi:\parallel\varphi\parallel_{\beta}\leq 1}{{\rm sup}}\vert (T_{t_1}\varphi)(x)-(T_{t_2}\varphi)(x)\vert 
    \leq  2M(1+\beta V(x)).
   \end{split}
\end{equation*}

Let $na \leq t < (n+1)a$, then
\begin{equation*}
    \begin{split}
     & \rho_{\beta}((T_t^*T_{\delta_x},T_{\infty}) 
     \leq  \rho_{\beta}((T_t^*T_{\delta_x},(T_a^{*})^nT_{\delta_x})+\rho_{\beta}((T_a^{*})^nT_{\delta_x},T_{\infty})\\
     \leq & \bar{\alpha}^n \rho_{\beta}((T_{t-na}^*T_{\delta_x},T_{\delta_x})+\rho_{\beta}((T_a^{*})^nT_{\delta_x},T_{\infty})\\
     \leq & \bar{\alpha}^n 2M(1+\beta V(x))+\rho_{\beta}((T_a^{*})^nT_{\delta_x},T_{\infty})
     \to 0,\ as\ n\to \infty. 
    \end{split}
\end{equation*}
 Thus, we proved the convergence of $T_t$, as $t \to \infty$.   
\end{proof}
\begin{lemma}\label{lemma 3.12}
  Assume $\rho_{\beta}((T_a^*)^nT_{\delta_x},T_{\infty})\rightarrow 0 \ as\  n\rightarrow\infty$, then $T_{\infty}T_a= T_{\infty}$.\\
  Assume $ \rho_{\beta}((T_t^*T_{\delta_x},T_{\infty})\rightarrow 0\ \ as\ t\rightarrow \infty$, then $T_{\infty}T_t= T_{\infty}$ for any fixed $0 \leq t < \infty$.
\end{lemma}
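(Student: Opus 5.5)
Both assertions will be proved the way Lemma \ref{Lemma3.7} was: compute a single quantity as the limit of one sequence in two different ways. Fix $x\in\mathbb{R}^d$ and $\varphi\in B_b(\mathbb{R}^d)$. Since $\varphi$ is bounded, $\parallel\varphi\parallel_{\beta}\le \sup|\varphi|<\infty$. Convergence in $\rho_{\beta}$ then gives pointwise convergence on $\varphi$: for any $S,\tilde S\in\mathcal{M}$ we have $|S\varphi-\tilde S\varphi|\le \parallel\varphi\parallel_{\beta}\,\rho_{\beta}(S,\tilde S)$, by the homogeneous representation of $\rho_{\beta}$ recorded before the first lemma of this section.

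For the first statement, take $\psi:=T_a\varphi$, which lies in $B_b(\mathbb{R}^d)$ by Definition \ref{deftt}. Apply the convergence hypothesis to $\varphi$ and to $\psi$:
\begin{equation*}
(T_a^{n+1}\varphi)(x)=((T_a^*)^{n+1}T_{\delta_x})\varphi\to T_{\infty}\varphi,
\end{equation*}
and, using the semigroup property,
\begin{equation*}
(T_a^{n+1}\varphi)(x)=(T_a^{n}(T_a\varphi))(x)=((T_a^*)^{n}T_{\delta_x})\psi\to T_{\infty}\psi=T_{\infty}T_a\varphi .
\end{equation*}
The two limits are limits of the same real sequence, so they agree: $T_{\infty}T_a\varphi=T_{\infty}\varphi$ for every $\varphi$. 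Hence $T_{\infty}T_a=T_{\infty}$.

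For the second statement, fix $t\ge0$ and set $\psi:=T_t\varphi\in B_b(\mathbb{R}^d)$. For $s\ge0$ the Chapman formula gives $(T_{s+t}\varphi)(x)=(T_s\psi)(x)$. As $s\to\infty$, the left side is $(T_{s+t}^*T_{\delta_x})\varphi$, which converges to $T_{\infty}\varphi$ because $s+t\to\infty$. The right side is $(T_s^*T_{\delta_x})\psi$, which converges to $T_{\infty}\psi=T_{\infty}T_t\varphi$. Equating the limits gives $T_{\infty}T_t=T_{\infty}$.

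The only point needing care is that $\rho_{\beta}$-convergence must be applied to $\psi=T_a\varphi$ (respectively $T_t\varphi$), not merely to $\varphi$. This requires $\psi$ to have finite $\beta$-norm, and it does, since $T_t$ maps $B_b$ into $B_b$ and bounded functions have $\parallel\cdot\parallel_{\beta}\le\sup|\cdot|$. If one also wanted to allow unbounded $\varphi$ with $\parallel\varphi\parallel_{\beta}<\infty$, one would instead invoke Assumption \ref{Ass3.1} to get $\parallel T_t\varphi\parallel_{\beta}<\infty$. That extension is not needed here, because ISEs are defined on $B_b(\mathbb{R}^d)$.
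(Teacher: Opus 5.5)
Your proof is correct and takes the same approach as the paper, which simply refers back to the proof of Lemma \ref{Lemma3.7}: evaluate $(T_a^{n+1}\varphi)(x)$ (resp.\ $(T_{s+t}\varphi)(x)$) once directly, and once as the approximating operators applied to $T_a\varphi$ (resp.\ $T_t\varphi$), then identify the two limits. Your explicit check that $\rho_\beta$-convergence controls evaluation at $\psi=T_a\varphi$, via $|S\psi-\tilde S\psi|\le\parallel\psi\parallel_\beta\,\rho_\beta(S,\tilde S)$ with $\parallel\psi\parallel_\beta\le\sup|\psi|$, makes precise a step the paper leaves implicit.
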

\begin{proof}
    The proof is similar as that of Lemma \ref{Lemma3.7}.
\end{proof}
\begin{theorem}\label{coninth}
  Under Assumption \ref{Ass3}, \ref{Ass3.1} and \ref{Ass4}, SMS $T_t$, $t\geq0$, has a unique ISE $T_{\infty} \in \mathcal{M}$, such that $T_{\infty}=T_{\infty}T_t$, for any $t \geq 0$.
\end{theorem}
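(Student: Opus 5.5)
Existence will come directly from Theorem \ref{C3maintheorem} combined with Lemma \ref{lemma 3.12}; uniqueness will use the contraction of $T_a^*$ in the same way as in the discrete uniqueness theorem.

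\emph{Existence.} Under Assumptions \ref{Ass3} and \ref{Ass4}, the remark following them (which invokes Proposition \ref{procha3}) gives $\beta>0$ and $\bar\alpha\in(0,1)$ such that $\normmm{T_a\varphi}_\beta\le\bar\alpha\normmm{\varphi}_\beta$. Theorem \ref{C3maintheorem} then produces $T_\infty\in\mathcal{M}$ with $\rho_\beta((T_a^*)^nT_{\delta_x},T_\infty)\to0$. With Assumption \ref{Ass3.1} added, the same theorem gives $\rho_\beta(T_t^*T_{\delta_x},T_\infty)\to0$ as $t\to\infty$. The second part of Lemma \ref{lemma 3.12} then gives $T_\infty T_t=T_\infty$ for every fixed $t\ge0$, so $T_\infty$ is an ISE.

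One point needs checking: $T_\infty$ must not depend on $x$. I would verify this as in the second half of the proof of Theorem \ref{disth}. For $x,y$, note that $\varphi\mapsto(T_a^n\varphi)(x)-(T_a^n\varphi)(y)$ is unchanged when a constant is added to $\varphi$. Hence it is bounded by $\normmm{T_a^n\varphi}_\beta\, d_\beta(x,y)\le\bar\alpha^n\normmm{\varphi}_\beta\, d_\beta(x,y)$. This gives $d_\beta((T_a^*)^nT_{\delta_x},(T_a^*)^nT_{\delta_y})\to0$, and since $\rho_\beta=d_\beta$, the two limits coincide.

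\emph{Uniqueness.} Suppose $\tilde T_\infty\in\mathcal{M}$ satisfies $\tilde T_\infty T_t=\tilde T_\infty$ for all $t\ge0$; in particular $\tilde T_\infty T_a=\tilde T_\infty$. Then
\begin{equation*}
\rho_\beta(\tilde T_\infty,T_\infty)=\rho_\beta(T_a^*\tilde T_\infty,T_a^*T_\infty)\le\bar\alpha\,\rho_\beta(\tilde T_\infty,T_\infty),
\end{equation*}
where the inequality is the analogue of Lemma \ref{anewlemma} for $T_a$, proved using (\ref{cha3pro}) in place of Proposition \ref{procha3}.

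The main obstacle is that this inequality yields $\rho_\beta(\tilde T_\infty,T_\infty)=0$ only if $\rho_\beta(\tilde T_\infty,T_\infty)<\infty$. That holds when the test functions with $\|\varphi\|_\beta\le1$ are bounded, for instance when $V$ is bounded or when $\rho_\beta$ is restricted to $B_b$ and compared through $d_\beta$. If necessary, I would first carry out the argument for $\varphi$ with $\normmm{\varphi}_\beta\le1$. For such $\varphi$, $|\tilde T_\infty\varphi-T_\infty\varphi|\le\sup\varphi-\inf\varphi<\infty$, since both operators are SEs applied to bounded functions. Iterating with $T_a^n$ then gives
\begin{equation*}
|\tilde T_\infty\varphi-T_\infty\varphi|=|\tilde T_\infty T_a^n\varphi-T_\infty T_a^n\varphi|\le \operatorname{osc}(T_a^n\varphi),
\end{equation*}
so it suffices to show this oscillation tends to $0$. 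I would obtain that from $\normmm{T_a^n\varphi}_\beta\le\bar\alpha^n$ on the sublevel sets of $V$, together with a truncation argument using Assumption \ref{Ass3}. Once $\tilde T_\infty=T_\infty$ on all of $B_b(\mathbb{R}^d)$, uniqueness follows.
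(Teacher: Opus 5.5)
Your existence part is the paper's proof (Theorem \ref{C3maintheorem} followed by Lemma \ref{lemma 3.12}). Your check that $T_\infty$ does not depend on $x$, via $d_\beta((T_a^*)^nT_{\delta_x},(T_a^*)^nT_{\delta_y})\le\bar\alpha^n d_\beta(x,y)$, is correct and fills in a step the paper leaves implicit.

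For uniqueness, your contraction inequality is exactly what the paper relies on, namely the argument of its discrete uniqueness theorem. You are right that it only closes when $\rho_\beta(\tilde T_\infty,T_\infty)<\infty$, which the paper never verifies. When $V$ is bounded, as on $S^1$, finiteness is automatic and your argument is complete. Restricting to $B_b$ or switching to $d_\beta$ does not help, since $\rho_\beta$ is already defined on $B_b$ and equals $d_\beta$.

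Your repair fails at the step ``it suffices to show $\mathrm{osc}(T_a^n\varphi)\to0$''. For unbounded $V$ the contraction only gives $|(T_a^n\varphi)(x)-(T_a^n\varphi)(y)|\le\bar\alpha^n\normmm{\varphi}_\beta\,(2+\beta V(x)+\beta V(y))$, which is not uniform in $x,y$. The oscillation need not decay at all. Take the Ornstein--Uhlenbeck semigroup of $dX_t=-X_t\,dt+dW_t$ on $\mathbb{R}$; it is linear, hence an SMS, and satisfies Assumptions \ref{Ass3}, \ref{Ass3.1}, \ref{Ass4} with $V(x)=x^2$. For $\varphi=\mathds{1}_{[-1,1]}$ one has $\inf_x(T_a^n\varphi)(x)=0$ for every $n$, while $(T_a^n\varphi)(0)\to\mu([-1,1])>0$, where $\mu=N(0,\tfrac12)$.

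No truncation can rescue this, because uniqueness over all of $\mathcal{M}$ is false. In the same example, put $y_N=e^{2N}$ and $\ell_N(\varphi)=\frac1N\int_0^N(T_t\varphi)(y_N)\,dt$. Then:
\begin{itemize}
\item $|\ell_N(T_s\varphi)-\ell_N(\varphi)|\le 2s\|\varphi\|_\infty/N$;
\item $\ell_N(\mathds{1}_{[-R,R]})\to0$ for every $R$, because $X_t^{y_N}\sim N(e^{-t}y_N,\tfrac{1-e^{-2t}}{2})$ has mean at least $e^N$ for $t\le N$.
\end{itemize}
By Tychonoff, $(\ell_N)$ has a pointwise cluster point $\ell$ on $B_b(\mathbb{R})$. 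It is a positive, normalized, linear functional, so $\ell\in\mathcal{M}$. It satisfies $\ell T_s=\ell$ for all $s\ge0$, and it vanishes on every bounded set. Hence $\ell\ne\mu=T_\infty$, and in fact $\rho_\beta(\ell,T_\infty)=\infty$ (test with $(1+\beta V)\wedge M$).

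What the contraction does prove is uniqueness among ISEs at finite $\rho_\beta$-distance from $T_\infty$, for instance those with $\sup_N\tilde T(V\wedge N)<\infty$. This class contains every ISE that is continuous from above. Indeed, invariance and Assumption \ref{Ass3} give $\tilde T(V\wedge N)\le\gamma^nM+\frac{K}{1-\gamma}+N\,\tilde T(\mathds{1}_{\{V>M\}})$ for all $n$ and $M$; letting $n\to\infty$ and then $M\to\infty$ bounds $\tilde T(V\wedge N)$ by $\frac{K}{1-\gamma}$. So the missing ingredient is not a sharper estimate but a restriction of the class in which uniqueness is asserted. Over all of $\mathcal{M}$, as stated, the uniqueness claim cannot be proved.
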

\begin{proof}
    The result is derived from Theorem \ref{C3maintheorem} and Lemma \ref{lemma 3.12}.
\end{proof}

\section{ Existence of ISEs of G-diffusions }
In this section, we will use the previous theorems to prove the existence of ISEs of some $G\text{-}$diffusions. In order to find a probability measure such that Assumption \ref{Ass4} can be satisfied, we will need to extend the result in Denis et al. \cite{DenishU}. In \cite{DenishU}, they extended the domain of $G\text{-}$expectation by constructing an upper expectation $\bar{E}[\cdot]$,
\begin{equation}
    \bar{E}[X]\coloneqq \underset{P \in \mathcal{P}_G}{{\rm sup}}E_P[X],\ X \in L^0(\Omega_T),
\end{equation}
where $\mathcal{P}_G$ is a weakly compact family of probability measures that depends on a parameter related to $G\text{-}$normal distribution. This upper expectation coincides with the $G\text{-}$expectation on $L_G^1(\Omega_T)$.
In some special case, the $G$-expectation has a more explicit expression described below.

Let $ (\Omega, \mathcal{F}, P)$ be a probability space and $(W_t)_{t\geq 0}$ a d-dimensional Brownian motion in this space, $\mathcal{F}_t$ the filtration generated by $W$ and $\mathbb{F} = \{\mathcal{F}_t\}_{t\geq 0}$. We also denote another $\sigma\text{-}$filed for fixed $s,\ t\geq 0$, $ \mathcal{F}^{t,s}\coloneqq \sigma\{W_{t+u}-W_t, 0 \leq u \leq s\},\ \mathbb{F}^t\coloneqq \{\mathcal{F}^{t,s} \}_{s\geq 0}.$

Let $\Theta$ be a given bounded and closed subset in $\mathbb{R}^{d \times d}$, $\mathcal{A}^{\Theta}_{t,T}$ be the collection of all $\Theta$-valued $\mathbb{F}$-adapted process on an interval $[t,T] \in [0, \infty)$. For each fixed $\theta \in \mathcal{A}^{\Theta}_{t,T}$, we define a process $ B^{t,\theta}_{T} \coloneqq \int^T_t \theta_s d W_s$.
For each $n=1,2,...$, $\varphi \in C_{b,lip}(\mathbb{R}^{d \times n})$ and \\$0 \leq t_1,..., \leq t_n < \infty$, the $G\text{-}$expectation can be equivalently defined by
\begin{equation}
    \mathbb{E}[\varphi(B_{t_{1}},B_{t_{2}}-B_{t_{1}},...,B_{t_{n}}-B_{t_{n-1}})]=\underset{\theta \in  \mathcal{A}^{\Theta}_{0,\infty}}{{\rm sup}} E_P[\varphi(B^{0,\theta}_{t_1},B^{t_1,\theta}_{t_2},...,B^{t_{n-1},\theta}_{t_n})].
\end{equation}
Because the upper expectation $\bar{E}$ coincides with the $G$-expectation $\mathbb{E}$ in $L_G^1$, we will use the symbol $\bar{E}$ commonly in the rest of the paper.

According to this result, we can greatly reduce the calculation difficulty by transforming the calculations of $G$-expectations into those of linear expectations. In the subsequent parts of this paper, we will extend this result in two aspects. One is that we do not need $\varphi$ to be Lipschitz continuous. We only assume that $\varphi$ is bounded and measurable. The other point is that we will extend $\varphi(B_t)$ to $\varphi(X_t)$, where $X_t$ is a solution of following $G\text{-}$SDE, $ X_t = x+\int_0^t b(X_s)ds+B_t$,
where $x \in \mathbb{R}^d$, $b$ is a Lipschitz function when $\varphi \in C_{b,lip}(\mathbb{R}^{d})$,  and $b\in C_b^2$ when $\varphi \in B_b(\mathbb{R}^d)$.

\begin{subsection}{Invariant distribution of $G$-Brownian motion on unit circle}

Firstly, we consider the $G\text{-}$Brownian motion on the unit circle $S^1 = [0,2\pi]$ defined by $X(t)= x+B_t$ mod $2 \pi$, where $B$ is a one-dimensional $G\text{-}$Brownian motion such that $B_1$ has normal distribution $N(0,[\underline{\sigma}^2, \bar{\sigma}^2])$. Here, $\bar{\sigma}^2 \geq \underline{\sigma}^2$ are constants. We omit the description of $G\text{-}$framework, the reader who is interested in more details is referred to \cite{DenishU}, \cite{PengG} and \cite{pengbook}. For $\varphi \in C_{b,lip}(S^1)$, set
\begin{equation}\label{1BMT}
  (T_t\varphi)(x)= u(t,x)= \bar{E}[\varphi(X(t))].
\end{equation}

Then $u$ is a viscosity solution of the following fully nonlinear PDE (\cite{pengbook}),
\begin{equation}\label{1BMPDE}
  \frac{\partial}{\partial t}u=\frac{1}{2}\bar{\sigma}^2u_{xx}^+ - \frac{1}{2}\underline{\sigma}^2u_{xx}^-,\ u|_{t=0}=\varphi,\  x\in S^1.
\end{equation}

{ Let $(\Omega,\mathcal{F},P)$ be a probability space and $(W_t)_{t\geq0}$ be a $1$-dimensional Brownian motion in this space. The filtration generated by $W$ is denoted by $\mathcal{F}_t \coloneqq \sigma\{W_{\mu}:0\leq \mu \leq t\}\vee \mathcal{N}$, $\mathcal{F}=\{\mathcal{F}_t\}_{t\geq0}$, where $\mathcal{N}$ is the collection of $P$-null subsets. Let  $\mathcal{A}^{[\underline{\sigma}^2,\bar{\sigma}^2]}_{s,t}$ be the set of all $\mathcal{F}-$adapted processes with values in $[\underline{\sigma}^2, \bar{\sigma}^2]$ on an interval $[s,t]\subset [0,\infty)$. From Denis et al. \cite{DenishU}, we know that for any function $\varphi \in C_{b,lip}(S^1)$,
\begin{equation}\label{1BMudef}
\begin{split}
   &u(t,x)=(T_t\varphi)(x)=\bar{E}[\varphi(X(t))]\\
   =&\underset{\theta^2\in\mathcal{A}_{0,t}^{[\underline{\sigma}^2,\bar{\sigma}^2]}}{{\rm sup}}E_P[\varphi(x+\int_{0}^{t}\theta_s dW_s\  mod \ 2\pi)]
\end{split}
\end{equation}
is the viscosity solution of (\ref{1BMPDE}).}

In the following, we will give a lemma establishing that equality (\ref{1BMudef}) is valid for all $\varphi \in B_b(S^1)$. This result also extends the result in \cite{krylov1986}, \cite{krylov1987}, and \cite{pengbook}, where the continuity condition of $\varphi$ was also required. The main idea here is to study the right-hand side of (\ref{1BMudefa}), still denoted by $u(t,x)$, without linking to the viscosity solution of (\ref{1BMPDE}) in the first place. Using a purely probabilistic and analytical method, we can prove that $u$ is continuous in $x$ and $t$ as long as $t > 0$. This, together with the semigroup property and the viscosity solution result of \cite{krylov1986}, \cite{krylov1987} and \cite{pengbook}, implies that $u(t,x)$ is a viscosity solution of (\ref{1BMPDE}), which completes the extension.

\begin{lemma}\label{mainleammapart1}
If the $G\text{-}$Brownian motion is non-degenerate, i.e. $\underline{\sigma}^2>0$, then for any $\varphi \in B_b(S^1)$, we have
 \begin{equation}\label{1BMudefa}
   u(t,x)= \underset{{\theta^2\in \mathcal{A}_{0,t}^{[\underline{\sigma}^2,\bar{\sigma}^2]}}}{{\rm sup}}E_P[\varphi(x+\int_{0}^{t}\theta_s dW_s\  mod \ 2\pi)]    
 \end{equation}
  is continuous in $x$ for any $x \in S^1$ and $\frac{1}{2}\text{-}$H$\ddot{o}$lder continuous in $t$ for any $t >0 $. That is to say (\ref{1BMudef}) can hold for any $\varphi \in B_b(S^1)$.
\end{lemma}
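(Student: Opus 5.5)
Fix $\varphi\in B_b(S^1)$ and write $M=\sup|\varphi|$. Set $Y^{\theta}_t=\int_0^t\theta_s\,dW_s$ and $v^\theta(t,x)=E_P[\varphi(x+Y^\theta_t \bmod 2\pi)]$, so that $u=\sup_\theta v^\theta$. The plan is to prove continuity estimates for each $v^\theta$ that do not depend on $\theta$; the supremum then inherits them. Bounds that are uniform in $\theta$ can only come from non-degeneracy, $\theta_s^2\ge\underline{\sigma}^2>0$. The cleanest route is a Bismut--Elworthy--Li type integration by parts, which puts the derivative in $x$ onto the Brownian noise instead of onto $\varphi$.

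\emph{Continuity in $x$.} For $\varphi$ smooth, lift to $\mathbb{R}$ ($\varphi$ $2\pi$-periodic). Since $\theta$ is adapted and $|\theta_s|\ge\underline{\sigma}$, the flow $x\mapsto x+Y^\theta_t$ has derivative $1$, and we write $1=\frac1t\int_0^t\theta_s\theta_s^{-1}ds$. Using Malliavin integration by parts, with $D_sY_t=\theta_s$ plus the term coming from the dependence of $\theta$ on $W$, we aim for
\begin{equation*}
\partial_x v^\theta(t,x)=\frac1t E_P\Big[\varphi(x+Y^\theta_t)\int_0^t\theta_s^{-1}\,dW_s\Big].
\end{equation*}
Note that $x$ shifts only the terminal value, so the control can be taken independent of $x$. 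If the Malliavin argument is unavailable because $\theta$ is not smooth in $W$, use a Girsanov argument instead: shift $W$ by $\epsilon\int_0^\cdot\theta_s^{-1}ds/t$, which changes $Y_t$ by exactly $\epsilon$ when $\theta$ is held fixed. Since $\theta$ is adapted it changes too, so I would first reduce to $\theta$ piecewise constant, or to feedback controls, which are dense and give the same supremum. Either way the result is the bound $|\partial_x v^\theta|\le M\,t^{-1}\big(E\int_0^t\theta_s^{-2}ds\big)^{1/2}\le M/(\underline{\sigma}\sqrt t)$, uniform in $\theta$. We then pass from smooth $\varphi$ to $B_b$ by monotone-class or $L^1$ approximation. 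For fixed $\theta$, the law of $Y^\theta_t$ has a density with respect to Lebesgue measure; this follows from non-degeneracy via Krylov's estimate $E\int|f(Y_s)|ds\le C\|f\|_{L^1}$. The Lipschitz bound, being uniform in $\varphi$ with $\sup|\varphi|\le M$, is preserved under bounded a.e. convergence. So $u(t,\cdot)$ is Lipschitz with constant $M/(\underline{\sigma}\sqrt t)$.

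\emph{Hölder continuity in $t$.} For $0<s<t$, condition on $\mathcal{F}_s$ and use the flow property:
\begin{equation*}
v^\theta(t,x)=E_P\big[\tilde v(s,x+Y^\theta_t-Y^\theta_s)\big],
\end{equation*}
where $\tilde v(s,\cdot)$ is the conditional version of $v$ for the control restricted after the shift. Better, reverse the roles and write $Y_t=Y_{t-s}+(Y_t-Y_{t-s})$, so that the first piece carries the smoothing over time $t-s\approx t$. By the uniform Lipschitz bound with time $t-s\ge t/2$, for $h=t-s$ small we get
\begin{equation*}
|v^\theta(t,x)-v^\theta(t-h,x)|\le \frac{2M}{\underline{\sigma}\sqrt{t}}\,E|Y_t-Y_{t-h}|\le C\frac{\bar\sigma}{\underline{\sigma}\sqrt t}\sqrt h.
\end{equation*}
Some care is needed because the conditional control after time $t-h$ depends on the past. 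To handle this I would use the regular conditional law, which yields again a control in $\mathcal{A}$ on a shifted Brownian motion, with the same uniform bound. Taking the supremum over $\theta$ gives $\frac12$-Hölder continuity on $[\epsilon,\infty)$.

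\emph{Identification and main obstacle.} Finally, both sides of (\ref{1BMudef}) are monotone, sublinear and satisfy the semigroup property, the latter by dynamic programming for $u$. They agree on $C_{b,lip}$, and $u(t,\cdot)$ is continuous for $t>0$. Writing $u(t)=T_{t-\epsilon}[u(\epsilon)]$ with $u(\epsilon)$ continuous, and approximating $\varphi$ from Krylov's density bound, shows that (\ref{1BMudef}) extends to $B_b(S^1)$. I expect the main obstacle to be the $\theta$-uniform derivative bound in the presence of a general adapted, non-smooth $\theta$. The integration-by-parts formula must be justified by reduction to simple or feedback controls plus a density argument, and one must check that these reductions do not change the supremum for merely measurable $\varphi$.
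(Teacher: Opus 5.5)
There is a genuine gap, and it sits exactly at the step you flag as the main obstacle. A bound $|\partial_x v^\theta(t,\cdot)|\le C\|\varphi\|_\infty/\sqrt{t}$ with $C$ independent of the adapted control $\theta$ is not merely hard to justify. It is false as soon as $\bar{\sigma}^2>\underline{\sigma}^2$, already for smooth $\varphi$ and piecewise-constant $\theta$.

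\emph{Counterexample.} Take $k\in\mathbb{N}$ with $\delta=k^{-2}<t$. Put $\theta_s=\underline{\sigma}$ on $[0,t-\delta)$, and on $[t-\delta,t]$ put $\theta_s=\underline{\sigma}$ if $\cos(kY_{t-\delta})>0$ and $\theta_s=\bar{\sigma}$ otherwise. Conditioning on $\mathcal{F}_{t-\delta}$ gives $E_P[\cos(kY_t)]=E_P[g(kY_{t-\delta})]$, where $g(z)=e^{-\underline{\sigma}^2/2}\cos z$ if $\cos z>0$ and $g(z)=e^{-\bar{\sigma}^2/2}\cos z$ otherwise. Over a period, $g$ has mean $m=\pi^{-1}(e^{-\underline{\sigma}^2/2}-e^{-\bar{\sigma}^2/2})>0$. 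Since $kY_{t-\delta}\sim N(0,k^2\underline{\sigma}^2(t-\delta))$ equidistributes modulo $2\pi$, we get $E_P[\cos(kY_t)]\to m$. With $\varphi=\sin(k\,\cdot)$, which has sup norm $1$, you get $\partial_xv^\theta(t,0)=kE_P[\cos(kY_t)]\approx mk\to\infty$.

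\emph{Consequences for your argument.} The Bismut--Elworthy--Li identity you aim for holds for deterministic $\theta$ but not for adapted ones; here the correction coming from the $W$-dependence of $\theta$ is of order $k$. No reduction to simple or feedback controls can rescue it, because the counterexample is already simple. Your time-continuity step uses the same bound for the conditional value functions, so it fails with it.

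\emph{The paper does not fill this gap.} Its proof aims at exactly this intermediate estimate by a different device: the Dambis--Dubins--Schwarz time change and the identity $E_P[\varphi(x+\int_0^t\theta_sdW_s)]=E_P[J(t,x+\tilde{W}_{\tilde{\theta}_t^2}-\tilde{W}_{\underline{\sigma}^2t})]$, where $J(t,\cdot)$ is the circle heat semigroup at time $\underline{\sigma}^2t$ applied to $\varphi$. That identity would imply the same $\theta$-uniform Lipschitz bound. In the example it fails, because $\tilde{\theta}_t^2$ depends on $\tilde{W}_{\underline{\sigma}^2(t-\delta)}$. So the increment $\tilde{W}_{\tilde{\theta}_t^2}-\tilde{W}_{\underline{\sigma}^2t}$ is neither measurable with respect to the increments of $\tilde{W}$ after time $\underline{\sigma}^2t$ nor independent of $\tilde{W}_{\underline{\sigma}^2t}$.

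\emph{What does work.} Use the supremum: compare a control started at $x_1$ with a \emph{different} admissible control started at $x_2$, rather than one control at two starting points. Lift to $\mathbb{R}$ with $|x_1-x_2|\le\pi$. Given $\theta$, let $D_s=x_1-x_2+2\int_0^s\theta_rdW_r$ and $\tau=\inf\{s:D_s=0\}$. Set $\theta'_s=-\theta_s$ for $s<\tau$ and $\theta'_s=\theta_s$ for $s\ge\tau$. Then $\theta'$ is adapted with $(\theta')^2=\theta^2$, hence admissible, since only $\theta^2$ is constrained. Moreover $x_1+\int_0^t\theta_sdW_s=x_2+\int_0^t\theta'_sdW_s$ on $\{\tau\le t\}$. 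Since $\langle D\rangle_s\ge4\underline{\sigma}^2s$, the time change and the reflection principle give $P(\tau>t)\le|x_1-x_2|/(\underline{\sigma}\sqrt{2\pi t})$. Therefore
$v^\theta(t,x_1)\le v^{\theta'}(t,x_2)+2\|\varphi\|_\infty P(\tau>t)\le u(t,x_2)+L_t|x_1-x_2|$, with $L_t=2\|\varphi\|_\infty/(\underline{\sigma}\sqrt{2\pi t})$. Taking the supremum over $\theta$ and exchanging $x_1,x_2$ gives the Lipschitz bound for $u(t,\cdot)$ directly, for every $\varphi\in B_b(S^1)$, with no smoothing or approximation of $\varphi$.

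Your appeal to Krylov's estimate would not have closed that approximation step anyway. It controls $E_P\int_0^Tf(Y_s)\,ds$, not the law of $Y_t$ at a fixed time, whereas what is needed is smallness of $\sup_\theta P(Y^\theta_t\in A)$ uniformly in $\theta$.

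For the $\frac12$-H\"older bound in $t$, argue in two directions:
\begin{itemize}
\item Condition on $\mathcal{F}_h$ to get $u(t+h,x)\le\sup_\theta E_P[u(t,x+\int_0^h\theta_sdW_s)]\le u(t,x)+L_t\bar{\sigma}\sqrt{h}$.
\item For the reverse inequality, run an $\varepsilon$-optimal control for $u(t,x)$ on the increments after time $h$, coupled as above to absorb the displacement $\int_0^h\theta_sdW_s$.
\end{itemize}
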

\begin{proof}
    Similarly to the method used in Feng and Zhao \cite{Zhaonon}, setting $\tilde{\theta}_t^2 = \int_{0}^{t}\theta_s^2ds$, there exists a standard Brownian motion $\tilde{W}$ such that $\int_{0}^{t} \theta_s dW_s = \tilde{W}_{\tilde{\theta}_t^2}$ (c.f. DDS Time-change for martingales Theorem \cite{Bwbook}), where $\tilde{\theta}_t^2$ is strictly increasing in $t$ and $\underline{\sigma}^2t \leq \tilde{\theta}_t^2 \leq \bar{\sigma}^2t$. For any $0 \leq s < \infty $, we set $ \tilde{\mathcal{F}}^{\underline{\sigma}^2t,s}\coloneqq \sigma \{ \tilde{W}_{\underline{\sigma}^2t+u}-\tilde{W}_{\underline{\sigma}^2t}, 0 \leq u \leq s\}$, and $\tilde{\mathbb{F}}^{\underline{\sigma}^2t } \coloneqq \{ \tilde{\mathcal{F}}^{\underline{\sigma}^2t,s} \}_{s\geq 0}.$ Note for any $\varphi \in B_b(S^1)$,
\begin{equation*}
\begin{split}
    & E_P[\varphi(x+\int_{0}^{t}\theta_s dW_s\  mod\  2\pi)]\\
  =&  E_P[E_P[\varphi(x+\tilde{W}_{\underline{\sigma}^2t}+(\tilde{W}_{\tilde{\theta}_t^2}-\tilde{W}_{\underline{\sigma}^2t}) \ mod\  2\pi)|\tilde{\mathbb{F}}^{\underline{\sigma}^2t }]]\\
  = & E_P[E_P[\varphi(x+z+\tilde{W}_{\underline{\sigma}^2t}\  mod\  2\pi)]|_{z=\tilde{W}_{\tilde{\theta}_t^2} - \tilde{W}_{\underline{\sigma}^2t}}]\\
  =&E_P[J(t,x+\tilde{W}_{\tilde{\theta}_t^2} - \tilde{W}_{\underline{\sigma}^2t} )].
\end{split}
\end{equation*}
Here 
\begin{equation*}
   \begin{split}
        J(t,y)=&E_P[\varphi(y+\tilde{W}_{\underline{\sigma}^2t}\  mod\  2\pi)]
    =\int_0^{2 \pi}\varphi(\eta)p(\underline{\sigma}^2t,y,\eta)d\eta,
   \end{split}
\end{equation*}
with
\begin{equation*}
   p(\underline{\sigma}^2t,y,\eta) =  \Sigma_{k \in \mathbb{Z}}\frac{1}{\sqrt{2\pi\underline{\sigma}^2t}}{\rm exp}(-\frac{(y-\eta\ -2k\pi)^2}{2\underline{\sigma}^2t}).
\end{equation*}
It is easy to see that
\begin{equation*}
   \vert \partial_yp(\underline{\sigma}^2t,y,\eta) \lvert \leq \frac{2}{\sqrt{\underline{\sigma}^2t}}\Sigma_{k \in \mathbb{Z}}\frac{1}{\sqrt{4\pi\underline{\sigma}^2t}}{\rm exp}(-\frac{(y-\eta-2k\pi)^2}{\underline{4\sigma}^2t}).
\end{equation*}
and
\begin{equation*}
    \vert\partial^2_yp(\underline{\sigma}^2t,y,\eta)\vert\leq \frac{c_2}{t}\Sigma_{k \in \mathbb{Z}}\frac{1}{\sqrt{4\pi\underline{\sigma}^2t}}{\rm exp}(-\frac{(y-\eta-2k\pi)^2}{4\underline{\sigma}^2t}). 
\end{equation*}
Then we have
\begin{equation}\label{J_1}
    \vert \partial_yJ(t,y)\vert \leq \frac{c_1}{\sqrt{t}}\parallel\varphi\parallel_{\infty},
\end{equation}

and
\begin{equation}\label{J2}
   \vert  \partial^2_yJ(t,x)\vert \leq \frac{c_2}{t}\parallel\varphi\parallel_{\infty},
\end{equation}
where $c_1$, $c_2$ are constants. We can also easily see that
\begin{equation}\label{J3}
    \partial_tJ(t,x)=\frac{1}{2}\underline{\sigma}^2\partial^2_yJ(t,x).
\end{equation}

Define 
{\begin{equation*}
  u(t,x)= \underset{\theta^2\in\mathcal{A}^{[\underline{\sigma}^2,\bar{\sigma}^2]}_{0,t}}{{\rm sup}}E_P[\varphi(x+\int_{0}^{t}\theta_s dW_s\  mod \ 2\pi)].      
\end{equation*}
Then for any $x_1,x_2 \in S^1$, 
\begin{equation*}
   \begin{split}
       & \vert u(t,x_1)-u(t,x_2)\vert \\
       \leq & \underset{\theta^2\in\mathcal{A}^{[\underline{\sigma}^2,\bar{\sigma}^2]}_{0,t}}{{\rm sup}}\vert E_P[\varphi(x_1+\int_{0}^{t}\theta_s dW_s\  mod\  2\pi)]- E_P[\varphi(x_2+\int_{0}^{t}\theta_s dW_s\  mod\  2\pi)]  \vert\\
        = &\underset{\theta^2\in\mathcal{A}^{[\underline{\sigma}^2,\bar{\sigma}^2]}_{0,t}}{{\rm sup}}\vert E_P[J(t,x_1+\tilde{W}_{\tilde{\theta}_t^2} - \tilde{W}_{\underline{\sigma}^2t})]-E_P[J(t,x_2+\tilde{W}_{\tilde{\theta}_t^2} - \tilde{W}_{\underline{\sigma}^2t} )]\vert\\
        \leq&\frac{c_1}{\sqrt{t}}\parallel\varphi\parallel_{\infty}\vert x_1-x_2\vert.
   \end{split}
\end{equation*}
The last estimate is due to an application of the mean value Theorem and the boundedness of $\partial_yJ(t,y)$, which is uniform in $y$ for any fixed $t>0$ as presented in (\ref{J_1}).
For any fixed $t_1\geq t_2>0$, similar to the above, and here, by (\ref{J_1}), (\ref{J2}) and (\ref{J3}), we have
\begin{equation*}
   \begin{split}
       & \vert u(t_1,x)-u(t_2,x)\vert \\
        \leq &\underset{\theta^2\in\mathcal{A}^{[\underline{\sigma}^2,\bar{\sigma}^2]}_{0,t_1}}{{\rm sup}}\vert E_P[J(t_1,x+\tilde{W}_{\tilde{\theta}_{t_1}^2 - \underline{\sigma}^2{t_1}} )]-E_P[J(t_2,x+\tilde{W}_{\tilde{\theta}_{t_2}^2 - \underline{\sigma}^2{t_2}} )]\vert\\
        \leq &\underset{\theta^2\in\mathcal{A}^{[\underline{\sigma}^2,\bar{\sigma}^2]}_{0,t_1}}{{\rm sup}}\{ \vert E_P[J(t_1,x+\tilde{W}_{\tilde{\theta}_{t_1}^2 - \underline{\sigma}^2{t_1}} )]-E_P[J(t_2,x+\tilde{W}_{\tilde{\theta}_{t_1}^2 - \underline{\sigma}^2{t_1}} )]\vert \\
        &+ \vert E_P[J(t_2,x+\tilde{W}_{\tilde{\theta}_{t_1}^2 - \underline{\sigma}^2{t_1}} )]-E_P[J(t_2,x+\tilde{W}_{\tilde{\theta}_{t_2}^2 - \underline{\sigma}^2{t_2}} )]\vert  \}\\
        \leq&\frac{c_2}{t_2}\parallel\varphi\parallel_{\infty}\vert t_1-t_2\vert +\frac{c_1}{\sqrt{t_2}}\parallel\varphi\parallel_{\infty}E_P[\vert \tilde{W}_{\tilde{\theta}_{t_1}^2 - \underline{\sigma}^2{t_1}}- \tilde{W}_{\tilde{\theta}_{t_2}^2 - \underline{\sigma}^2{t_2}} \vert]\\
        \leq & \frac{c_2}{t_2}\parallel\varphi\parallel_{\infty}\vert t_1-t_2\vert + \frac{c_3}{\sqrt{t_2}}\parallel\varphi\parallel_{\infty}\vert t_1-t_2\vert^{\frac{1}{2}}.
   \end{split}
    \end{equation*}
    }
where $c_3$ is a constant depending on $\bar{\sigma}^2$ and $\underline{\sigma}^2$.

Then we define $(T_t\varphi)(x)=u(t,x)$, and for any $s,t >0$, using the dynamic programming principle of $u(t,x)$ we have,
{     
\begin{equation*}
    \begin{split}
   &(T_{t+s}\varphi)(x)\\
        =&\underset{\theta^2\in\mathcal{A}^{[\underline{\sigma}^2,\bar{\sigma}^2]}_{0,t+s}}{{\rm sup}}E_P[E_P[\varphi(x+\int_{0}^{t}\theta_r dW_r+\int_{t}^{t+s}\theta_r dW_r\  mod \ 2\pi)]|\mathcal{F}_t]\\
=&\underset{\theta^2\in\mathcal{A}^{[\underline{\sigma}^2,\bar{\sigma}^2]}_{0,t}}{{\rm sup}}E_P[\underset{\theta^2\in\mathcal{A}^{[\underline{\sigma}^2,\bar{\sigma}^2]}_{t, t+s}}{{\rm sup}}E_P[\varphi(\bar{x}+\int_{t}^{t+s}\theta_r dW_r\  mod \ 2\pi)]|_{\bar{x}=x+\int_{0}^{t}\theta_r dW_r}]\\
=&\underset{\theta^2\in\mathcal{A}^{[\underline{\sigma}^2,\bar{\sigma}^2]}_{0,t}}{{\rm sup}}E_P[T_s\varphi(x+\int_{0}^{t}\theta_r dW_r\  mod \ 2\pi)]\\
        =&(T_tT_s\varphi)(x).
    \end{split}
\end{equation*}
}
Note that the interchange between the supremum and expectation on the third line is valid by virtue of Yan's commutation theorem (c.f. \cite{Yanjiaan}).

So for any $t>0$, we can find a constant $0<\delta\leq t$ such that $T_{\delta}\varphi$ is continuous in $x$ and $T_t\varphi$ can be regarded as $(T_t\varphi)(x)=(T_{t-\delta}(T_{\delta}\varphi))(x)$. So, according to the result of the viscosity solutions with continuous initial functions (\cite{krylov1986}, \cite{krylov1987}, \cite{pengbook}), $u(t,x)$ is a viscosity solution of (\ref{1BMPDE}) with continuous initial function $T_{\delta}\varphi$. This implies that for any $\varphi \in B_b(S^1)$, the viscosity solution of (\ref{1BMPDE}) exists and has representation (\ref{1BMudefa}). 
\end{proof}    

The following result was proved by Feng and Zhao (\cite{Zhaonon}). But their proof cannot be generalized to general cases immediately. We present a new proof here using Lemma 4.1. This proof inspires us to find a proof to a result in a much more general case in the later part of the paper.

\begin{theorem}
    Let $T_t$ be the SMS defined by (\ref{1BMT}) with the $G\text{-}$Brownian motion $(B_t)_{t\geq0}$ on the unit circle $S^1=[0,2\pi]$. The distribution of $B_1$ is $N(0,[\underline{\sigma}^2, \bar{\sigma}^2])$, where $\bar{\sigma}^2\geq \underline{\sigma}^2 > 0$ are constants. Then the SMS $T_t$ has a unique ISE.
\end{theorem}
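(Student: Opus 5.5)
The plan is to verify Assumptions \ref{Ass3}, \ref{Ass3.1} and \ref{Ass4} for the SMS $T_t$ on $S^1$ and then invoke Theorem \ref{coninth}. Since $S^1$ is compact, the Lyapunov part is trivial. Take $V\equiv 1$ (or $V\equiv 0$; with $V\equiv1$ we have $T_aV=1$). Choose $\gamma=1/2$ and $K=1/2$, so that $(T_aV)(x)=1\le \gamma V(x)+K$ for every $a>0$. Assumption \ref{Ass3.1} likewise holds with $\delta=1$, because $T_t$ preserves constants. Now pick any $R>2K/(1-\gamma)=2$. Then $C=\{V\le R\}=S^1$ is the whole circle, and the small-set condition must be checked uniformly over all $x\in S^1$. (The statement is phrased on $\mathbb{R}^d$, but the general theory of Section 3 carries over verbatim to the state space $S^1$, with $B_b(S^1)$ in place of $B_b(\mathbb{R}^d)$.)

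The main step is Assumption \ref{Ass4}. Fix $a=1$ and $\varphi\in B_b(S^1)$ with $\varphi\ge0$. By Lemma \ref{mainleammapart1}, the representation (\ref{1BMudefa}) holds for $-\varphi$, so
\begin{equation*}
-(T_1(-\varphi))(x)=\inf_{\theta^2\in\mathcal{A}^{[\underline{\sigma}^2,\bar{\sigma}^2]}_{0,1}}E_P\Big[\varphi\Big(x+\int_0^1\theta_sdW_s\ \mathrm{mod}\ 2\pi\Big)\Big].
\end{equation*}
For each $\theta$, use the same DDS time-change decomposition as in the proof of Lemma \ref{mainleammapart1}. This gives $E_P[\varphi(\cdot)]=E_P[J(1,x+Z_\theta)]$, where $Z_\theta=\tilde W_{\tilde\theta_1^2}-\tilde W_{\underline{\sigma}^2}$ and $J(1,y)=\int_0^{2\pi}\varphi(\eta)p(\underline{\sigma}^2,y,\eta)d\eta$. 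The periodized heat kernel is strictly positive and continuous on the compact set $S^1\times S^1$, so $p_0:=\min_{y,\eta}p(\underline{\sigma}^2,y,\eta)>0$. Consequently $J(1,y)\ge p_0\int_0^{2\pi}\varphi(\eta)d\eta$ for every $y$, uniformly in $\theta$. Let $\nu$ be the normalized Lebesgue measure on $S^1$ and $\alpha=\min(2\pi p_0,1/2)\in(0,1)$. Then $\inf_{x\in S^1}-(T_1(-\varphi))(x)\ge\alpha E_\nu\varphi$, which is exactly Assumption \ref{Ass4}. The key ingredient is the non-degeneracy $\underline{\sigma}^2>0$, which supplies the Gaussian part $\tilde W_{\underline{\sigma}^2}$ independent of the adapted remainder $Z_\theta$.

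The step I expect to be delicate is justifying the conditioning in this decomposition. We need $\tilde W_{\underline{\sigma}^2}$ to be the part that gets smoothed, independently of $Z_\theta=\tilde W_{\tilde\theta_1^2}-\tilde W_{\underline{\sigma}^2}$. In general $\tilde\theta_1^2$ is random and is not independent of the early increments of $\tilde W$. I would follow the conditioning argument already used in Lemma \ref{mainleammapart1}, taking it as established there. If that proves insufficient, I would instead condition on $\mathcal{F}$ at an intermediate level: write $\int_0^1\theta dW$ as a martingale whose quadratic variation exceeds $\underline{\sigma}^2$, and bound its transition density from below on $S^1$ by a comparison with the minimal-diffusion heat kernel. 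Any positive uniform lower bound on the density suffices.

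Once Assumptions \ref{Ass3}, \ref{Ass3.1} and \ref{Ass4} are verified, Proposition \ref{procha3} gives the contraction (\ref{cha3pro}). Theorem \ref{C3maintheorem} then gives convergence of $T_t^*T_{\delta_x}$ to some $T_\infty\in\mathcal{M}$, and Theorem \ref{coninth} yields that $T_\infty$ is the unique ISE with $T_\infty T_t=T_\infty$.
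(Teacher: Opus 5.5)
Your route is the same as the paper's. Compactness disposes of Assumptions~\ref{Ass3} and \ref{Ass3.1}, and Assumption~\ref{Ass4} comes from the DDS decomposition in the proof of Lemma~\ref{mainleammapart1} plus a uniform positive lower bound on the periodized heat kernel. Your bookkeeping ($V\equiv 1$, $\gamma=K=\tfrac12$, $R>2$, $C=S^1$, $\nu$ the normalized Lebesgue measure) is actually cleaner than the paper's truncated Gaussian $\nu_a$, which is not a probability measure.

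However, the step you flagged is a genuine gap. Deferring to Lemma~\ref{mainleammapart1} does not close it, because the paper makes exactly the same unjustified conditioning. The identity $E_P[\varphi(x+\tilde W_{\underline{\sigma}^2}+Z_\theta)]=E_P[J(1,x+Z_\theta)]$ needs $Z_\theta=\tilde W_{\tilde\theta_1^2}-\tilde W_{\underline{\sigma}^2}$ to be measurable with respect to the increments of $\tilde W$ after time $\underline{\sigma}^2$, or at least independent of $\tilde W_{\underline{\sigma}^2}$. But $\tilde\theta_1^2=\int_0^1\theta_s^2ds$ is a stopping time of the DDS filtration and may depend on $\tilde W$ before time $\underline{\sigma}^2$. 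For example, with $\bar\sigma>\underline{\sigma}$ and $c>0$, let $\theta=\underline{\sigma}$ on $[0,\tfrac12]$, and on $(\tfrac12,1]$ let $\theta=\bar\sigma$ if $|W_{1/2}|>c$ and $\theta=\underline{\sigma}$ otherwise. Then $Z_\theta=0$ exactly off the event $\{|\tilde W_{\underline{\sigma}^2/2}|>\underline{\sigma}c\}$. So $Z_\theta$ and $\tilde W_{\underline{\sigma}^2}$ are dependent, and the identity fails in general. Your sentence that $\underline{\sigma}^2>0$ ``supplies the Gaussian part independent of the adapted remainder'' holds for deterministic $\theta$, not for adapted ones.

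The failure is not a technicality. Your constant $2\pi p_0$, like the paper's $\alpha_a$, depends only on $\underline{\sigma}$. Now take an arc $A$ and the feedback control $\theta_s=h(x+M_s\bmod 2\pi)$ with $h=\bar\sigma\mathds{1}_A+\underline{\sigma}\mathds{1}_{A^c}$ and $M_s=\int_0^s\theta_rdW_r$. This gives a one-dimensional diffusion whose time-$1$ density is $2h(y)^{-2}$ times a transition density with respect to the speed measure. That transition density stays bounded uniformly in $\bar\sigma$, by a one-dimensional Nash/Sobolev argument, since the speed measure has total mass bounded above and below. Hence $P(x+M_1\in A)\le C|A|\bar\sigma^{-2}\to 0$ as $\bar\sigma\to\infty$, so no Doeblin constant depending on $\underline{\sigma}$ alone can exist.

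Your fallback does not repair this either. Comparison with the $\underline{\sigma}^2$-heat kernel goes the wrong way. For all $a\in\mathbb{R}$ and $s\in[\underline{\sigma}^2,\bar\sigma^2]$ one has $\tfrac12\underline{\sigma}^2a^+-\tfrac12\bar\sigma^2a^-\le\tfrac12 s\,a$. So every linear heat solution is a supersolution of $\partial_tw=\tfrac12\underline{\sigma}^2(w_{xx})^+-\tfrac12\bar\sigma^2(w_{xx})^-$, the equation satisfied by $w=-T_t(-\varphi)$, and comparison only yields upper bounds on $w$. A lower bound on the quadratic variation alone cannot suffice either, as the example shows: $\bar\sigma$ must enter the constant.

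What is needed is a genuinely nonlinear lower bound, uniform over adapted $\theta$. One example is a Krylov--Safonov-type measure estimate (weak Harnack inequality) for nonnegative solutions of that minimal Pucci equation on $S^1$, giving $\inf_x w(a,x)\ge\alpha E_\nu\varphi$ with $\alpha$ depending on both $\underline{\sigma}$ and $\bar\sigma$. With such a bound in hand, the rest of your argument via Proposition~\ref{procha3}, Theorem~\ref{C3maintheorem} and Theorem~\ref{coninth} goes through as written.
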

\begin{proof}
In order to prove this theorem, we only need to verify whether $T_t$ defined in (\ref{1BMT}) can satisfy Assumptions \ref{Ass3}, \ref{Ass3.1} and \ref{Ass4} 
and then apply Theorem \ref{C3maintheorem}. Verification of Assumptions \ref{Ass3}, \ref{Ass3.1} is obvious due to the boundedness of $S^1$. It remains to find the probability we need in Assumption \ref{Ass4}.

For $\varphi \in B_b(S^1)$, $\varphi \geq 0$, as mentioned above,
by the heat kernel formula of Brownian motion on $S^1$, we have
\begin{equation*}
\begin{split}
  &E_P[\varphi(x+z+\tilde{W}_{\underline{\sigma}^2t} \ mod \ 2\pi)]\\
  =& \Sigma_{k \in \mathbb{Z}}\int_{0}^{2\pi}\frac{1}{\sqrt{2\pi\underline{\sigma}^2t}}{\rm exp}(-\frac{(x+z\  mod \  2\pi-y-2k\pi)^2}{2\underline{\sigma}^2t})\varphi(y)dy . 
\end{split}
\end{equation*} 

So, using the inequality $-(a-b-c)^2 \geq -3a^2 -3b^2-3c^2$, $a,b >0 $, we have
\begin{equation*}
      {\rm exp}(-\frac{(x+z\  mod\  2\pi-y-2k\pi)^2}{2\underline{\sigma}^2t}) 
\geq  {\rm exp}(-\frac{3(2\pi)^2}{2\underline{\sigma}^2t}-\frac{3y^2}{2\underline{\sigma}^2t}-\frac{3(2k\pi)^2}{2\underline{\sigma}^2t}).
\end{equation*}

Then 
\begin{equation*}
\begin{split}
     & E_P[\varphi(x+z+\tilde{W}_{\underline{\sigma}^2t} \ mod\  2\pi)]\\
\geq & \underset{k \in \mathbb{Z}}{\Sigma}\int_{0}^{2\pi}\frac{1}{\sqrt{2\pi\underline{\sigma}^2t}}{\rm exp}(-\frac{3(2\pi)^2}{2\underline{\sigma}^2t}-\frac{3y^2}{2\underline{\sigma}^2t}-\frac{3(2k\pi)^2}{2\underline{\sigma}^2t})\varphi(y)dy \\
\geq & \int_{0}^{2\pi}\varphi(y)\frac{1}{\sqrt{2\pi\underline{\sigma}^2t}}e^{-\frac{3y^2}{2\underline{\sigma}^2t}}e^{-\frac{3(2\pi)^2}{2\underline{\sigma}^2t}}\underset{k \in \mathbb{Z}}{\Sigma}e^{-\frac{6\pi^2k^2}{\underline{\sigma}^2t}}dy.  \\
\end{split}
\end{equation*}

In the last line, we get a bound which is independent of $x$, $z$, and $\theta$. 

 
So, we can choose a time $a>0$ and 
{     
\begin{equation*}
  \begin{split}
  &-[T_a(-\varphi)](x)  = -\bar{E}[-\varphi(X(a))]\\
       & = \underset{\theta^2\in \mathcal{A}_{0,a}^{[\underline{\sigma}^2,\bar{\sigma}^2]}}{{\rm inf}}E_P[\varphi(x+\int_{0}^{a}\theta_s dW_s\  mod \ 2\pi)] \\
       & \geq \int_{0}^{2\pi}\varphi(y)\frac{1}{\sqrt{2\pi\frac{\underline{\sigma}^2a}{3}}}e^{-\frac{3y^2}{2\underline{\sigma}^2a}}dy\frac{1}{\sqrt{3}}e^{-\frac{3(2\pi)^2}{2\underline{\sigma}^2a}}\underset{k \in \mathbb{Z}}{\Sigma}e^{-\frac{6\pi^2k^2}{\underline{\sigma}^2a}}.
  \end{split}
\end{equation*}
}

The chosen $a>0$ can make $\alpha_a = \frac{1}{\sqrt{3}}e^{-\frac{3(2\pi)^2}{2\underline{\sigma}^2a}}\underset{k \in \mathbb{Z}}{\Sigma}e^{-\frac{6\pi^2k^2}{\underline{\sigma}^2a}} >0$ which is also bounded. 
At the same time, we can find a Gaussian measure $\nu_a$ with a density $\nu_a(y):y\ \mapsto\ \frac{1}{\sqrt{2\pi\frac{\underline{\sigma}^2a}{3}}}e^{-\frac{3y^2}{2\underline{\sigma}^2a}} $. So, we have $\underset{x\in S^1}{{\rm inf}}-(T_a(-\varphi))(x)\geq \alpha_aE_{\nu_a}[\varphi]$.

We have proved that $G-$Brownian motion on the unit circle $S^1$ satisfies Assumptions \ref{Ass4}. So, it follows from Theorem \ref{coninth} that there exists an ISE $\tilde{T}$ of $T_t$, $t\geq 0$, i.e. $\tilde{T}=\tilde{T}T_t $ for all $t\geq0$. 
\end{proof}



\end{subsection}

\begin{subsection}{Invariant distribution of $G\text{-}$SDEs}
    In this subsection, we consider a class of d-dimensional $G\text{-}$SDEs,
\begin{equation}\label{GSDE}
    X_{t}=x+\int_0^t b(X_s)ds+B_t, \ 0\leq t<\infty,
\end{equation}
where $x \in \mathbb{R}^d$ is the initial value, $b:\mathbb{R}^d \mapsto \mathbb{R}^d$, 
$B_t$ is the d-dimensional $G\text{-}$Brownian motion. However, for some reason, we will consider the process $X_T^{t,x}$ first,


\begin{equation}\label{barxTtnew}
  X_T^{t,x}=x+\int_t^Tb(X_s^{t,x})ds+B_T-B_t,\ 0\leq T <
\infty, 0 \leq t <T.
\end{equation} 
Then the corresponding result of (\ref{GSDE}) can be derived from that of (\ref{barxTtnew}). This approach is adopted to facilitate the application of the backward Kolmogorov equation in Lemma \ref{part2}. Moreover, this choice of time interval is also consistent with the proof of Lemma \ref{mianlemma}, which will be detailed in Section 5.

Usually, in the $G\text{-}$framework, SMS is typically defined as follows ( \cite{Peng2005}, \cite{pengbook}),
\begin{equation}\label{barTv}
    \bar{T}_{T,t}\varphi(x)=\bar{\nu}(t,x)=\bar{E}[\varphi(X_T^{t,x})],\ \varphi \in C_{b,lip}(\mathbb{R}^d).
\end{equation}
Then $\bar{\nu}(t,x)$ is a viscosity solution of (\ref{PDE}) in Section 5. For the reader's convenience, we restate it as follows,
  \begin{equation}\label{mianpdeinap}
      \left\{\begin{aligned}
          & \partial_t\bar{\nu}+F(D^2\bar{\nu},D\bar{\nu},\bar{\nu},x)=0,\ 0\leq t\leq T,\\
          &\bar{\nu}(T,x)=\varphi(x), \ \varphi \in C_{b,lip}(\mathbb{R}^d),\\
      \end{aligned} \right. 
    \end{equation}
where $  F(D^2\bar{\nu},D\bar{\nu},\bar{\nu},x)=G(D^2\bar{\nu})+\left\langle D\bar{\nu},b(x)\right\rangle.$


Similarly, as in the case of $G$-Brownian motions on the unit circle,  we intend to represent $\bar{\nu}(t,x)$ as the supremum of a family of linear expectations of SDEs driven by linear Brownian motions. This extends the result presented in Section 3 of Denis et al. \cite{DenishU}. Recall that $\Theta$ is a given bounded and closed subset in $\mathbb{R}^{d \times d}$, $\mathcal{A}^{\Theta}_{t,T}$ is the collection of all $\Theta$-valued $\mathbb{F}$-adapted process on an interval $[t,T]$. Here $\mathbb{F}$ can be similarly defined as in Section 4.1 for dimension $d$.

\begin{lemma}\label{mianlemma}
Let $X_T^{t,x}$ be defined by (\ref{barxTtnew}), where $b$ is a Lipschitz continuous function with Lipschitz constant $l_b$. Then for each $\varphi \in C_{b,lip}(\mathbb{R}^{d })$, for any fixed $0\leq T <
\infty, 0 \leq t \leq T $,  the $G\text{-}$expectation of $\varphi(X_T^{t,x})$, can be equivalently defined by
\begin{equation}\label{lemma4.5result}
    \bar{E}[\varphi(X_T^{t,x})]=\underset{\theta \in  \mathcal{A}^{\Theta}_{t,T}}{{\rm sup}} E_P[\varphi(x+\int^T_t b(X_s^{\theta})ds+\int_t^T\theta_sdW_s)],
\end{equation}
where $ X_t^{\theta}=x + \int_0^t b(X_s^{\theta})ds+\int_0^t\theta_sdW_s$.
\end{lemma}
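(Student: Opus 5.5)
The plan is to reduce the statement to the Denis--Hu--Peng representation for cylindrical functionals of $G$-Brownian increments, recalled in the display preceding Section 4.1, by approximating the solution map of (\ref{barxTtnew}) by Euler schemes. The key observation is that the noise in (\ref{barxTtnew}) is additive. Hence $X_T^{t,x}$ is a deterministic functional of the path $(B_s-B_t)_{s\in[t,T]}$, namely $X^{t,x}_T=\Phi(B_\cdot-B_t)$, where $\Phi(\omega)=y_T$ and $y$ solves the integral equation $y_s=x+\int_t^s b(y_r)dr+\omega_s$. Since $b$ is Lipschitz with constant $l_b$, Gronwall gives
\[
\sup_{s\in[t,T]}|\Phi(\omega)_s-\Phi(\omega')_s|\le e^{l_b(T-t)}\sup_{s\in[t,T]}|\omega_s-\omega'_s| .
\]
Thus $\Phi$ is Lipschitz in the uniform norm, and the same map applied to $\int_t^\cdot\theta_r dW_r$ gives exactly the process $X^\theta$ on the right-hand side of (\ref{lemma4.5result}), started from $x$ at time $t$.

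For a partition $\pi_N:t=s_0<s_1<\dots<s_N=T$ with mesh $h=(T-t)/N$, I would define the Euler map $\Phi_N$. It depends only on the increments $\Delta_k\omega=\omega_{s_{k}}-\omega_{s_{k-1}}$, via $y_{k}=y_{k-1}+b(y_{k-1})h+\Delta_k\omega$. Then $\varphi(\Phi_N(\omega))=\psi_N(\Delta_1\omega,\dots,\Delta_N\omega)$ with $\psi_N\in C_{b,lip}(\mathbb{R}^{d\times N})$. Applying the recalled representation gives, for every $N$,
\[
\bar E[\psi_N(B_{s_1}-B_{s_0},\dots,B_{s_N}-B_{s_{N-1}})]=\sup_{\theta\in\mathcal A^\Theta_{t,T}}E_P[\psi_N(B^{s_0,\theta}_{s_1},\dots,B^{s_{N-1},\theta}_{s_N})].
\]
The recalled formula is stated from time $0$. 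The shift to start at $t$ uses the stationarity of $G$-Brownian increments on the left, and on the right the fact that $\mathcal A^\Theta_{t,T}$ plays the same role for $W$ restarted at $t$. In particular, $\varphi(X_T^{t,x})\in L^1_G(\Omega_T)$, so $\bar E$ coincides with the $G$-expectation.

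The remaining step is to pass $N\to\infty$ on both sides uniformly in $\theta$. The error is
\[
|\varphi(\Phi(\omega))-\varphi(\Phi_N(\omega))|\le L_\varphi C\Bigl(h\sup_s|b(\Phi(\omega)_s)|+l_b e^{l_b(T-t)}\,\mathrm{osc}_h(\omega)\Bigr)\wedge 2\|\varphi\|_\infty,
\]
where $\mathrm{osc}_h$ is the modulus of continuity of $\omega$ at scale $h$. This follows from the standard Euler estimate, with $\sup_s|b(\Phi_s)|\le|b(x)|+l_b(|x|+C+2\sup|\omega|)$ after Gronwall. It therefore suffices to show
\[
\sup_\theta E_P\Bigl[\bigl(h(1+\sup_{s}|B^{t,\theta}_s|)+\mathrm{osc}_h(B^{t,\theta})\bigr)\wedge 1\Bigr]\to0 ,
\]
and the analogous statement under $\bar E$ for $B$. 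For the $P$-side, Burkholder--Davis--Gundy with $\Theta$ bounded gives $E_P[\sup|B^{t,\theta}|^2]\le C$ and $E_P[\mathrm{osc}_h(B^{t,\theta})]\le C\sqrt{h\log(1/h)}$ uniformly in $\theta$, for instance via Kolmogorov/Garsia--Rodemich--Rumsey with fourth moments $E|B^{\theta}_s-B^{\theta}_r|^4\le C|s-r|^2$. On the $G$-side I would use the same moment bounds, which hold for $G$-Brownian motion, together with the upper-expectation form $\bar E=\sup_{P\in\mathcal P_G}E_P$ and the fact that the truncated errors lie in $L^1_G$ (they are continuous functionals of the path).

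The main obstacle is this last uniform convergence. In particular, one must check that $\varphi(X^{t,x}_T)$, defined through the $G$-SDE, coincides quasi-surely with $\varphi(\Phi(B))$. This follows from pathwise uniqueness of the additive-noise equation. One also needs the uniform-in-$\theta$ modulus-of-continuity estimate, for which I would use the fourth-moment Kolmogorov bound. Once both sides converge uniformly, taking suprema and limits gives (\ref{lemma4.5result}).
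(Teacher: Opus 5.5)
Your argument is correct, but it takes a genuinely different route from the paper.

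\textbf{The paper's route.} Following Section 3 of Denis--Hu--Peng, the paper works with value functions. It introduces the essential supremum $\Lambda_{t,T}[\xi]$ and shows it is deterministic for deterministic $\xi$. It then proves the dynamic programming principle $\nu(t,x)=\sup_{\theta}E_P[\nu(t+h,X^{t,x,\theta}_{t+h})]$, together with Lipschitz regularity of $\nu$ in $x$ and $\frac12$-H\"older regularity in $t$. It checks by It\^o's formula that $\nu$ is a viscosity solution of $\partial_t\nu+G(D^2\nu)+\langle D\nu,b\rangle=0$, $\nu(T,\cdot)=\varphi$. Finally it concludes from Peng's nonlinear Feynman--Kac formula and uniqueness of viscosity solutions.

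\textbf{Your route.} You stay at the level of path functionals. Additive noise gives $X^{t,x}_T=\Phi(B_\cdot-B_t)$ with $\Phi$ Lipschitz in the sup norm. The Euler maps put $\varphi(\Phi_N(B_\cdot-B_t))$ into $Lip(\Omega_T)$, where the recalled cylindrical formula applies verbatim. Moment and modulus-of-continuity bounds, uniform in $\theta$ and over $\mathcal P_G$, then let you pass to the limit on both sides.

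\textbf{Comparison.} Your route is more elementary: it needs no DPP and no comparison principle, and it shows along the way that $\varphi(X^{t,x}_T)\in L^1_G$. The paper's route does not rely on the solution being a continuous functional of the driving path. It therefore fits the general Feynman--Kac setting recalled in Section 5, and it supplies the DPP machinery reused in Section 4 (e.g.\ Theorem \ref{revth}). If you allow yourself the full Denis--Hu--Peng theorem ($\bar E=\sup_{P\in\mathcal P_G}E_P$ with $\mathcal P_G$ the weak closure of the laws of $B^{0,\theta}$), you can even skip the Euler step. The reason is that $\omega\mapsto\varphi(\Phi(\omega_\cdot-\omega_t))$ is already bounded and continuous on $\Omega$.

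Two details need tidying.
\begin{itemize}
\item \emph{Time shift.} You use stationarity on the left plus a ``restarted $W$'' on the right. The latter tacitly requires showing that $\mathcal F_t$-dependence of $\theta\in\mathcal A^\Theta_{t,T}$ does not increase the supremum. Instead, apply the recalled formula at times $s_0=t<s_1<\dots<s_N=T$ with a test function that ignores the first argument $B_{s_0}$. The supremum over $\mathcal A^\Theta_{0,\infty}$ then depends only on $\theta|_{[t,T]}$ and is exactly the supremum over $\mathcal A^\Theta_{t,T}$.
\item \emph{Modulus-of-continuity rate.} Garsia--Rodemich--Rumsey with fourth moments gives $E_P[\mathrm{osc}_h]\le Ch^{\alpha}$ only for $\alpha<\frac14$, not $C\sqrt{h\log(1/h)}$. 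Since any vanishing rate suffices, the argument is unaffected.
\end{itemize}
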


The proof of this lemma is long, so we postpone it to Section 5 in order not to interrupt the main flow of this section.

\begin{remark}\label{remark}
    From the proof of Lemma \ref{mianlemma}, we can know the relationship between the set $\Theta$ and $G\text{-}$Brownian motion, that is,  $\underset{\gamma \in \Theta}{{\rm sup}}\ tr(\gamma \gamma^*)$ and $\underset{\gamma \in \Theta}{{\rm inf}}\ tr(\gamma \gamma^*)$ are the bounds of variance of the $G\text{-}$Brownian motion, i.e.
    \begin{equation*}
      \underset{\gamma \in \Theta}{{\rm inf}}\ tr(\gamma \gamma^*)= -\bar{E}[-B_1^2]\coloneqq \underline{\sigma}^2,\ \underset{\gamma \in \Theta}{{\rm sup}}\ tr(\gamma \gamma^*)=\bar{E}[B_1^2]\coloneqq\bar{\sigma}^2.  
    \end{equation*}

\end{remark}

Now we extend both the definition of SMS $\bar{T}_{t,T}$ and the result in Lemma \ref{mianlemma} to any $\varphi \in B_{b}(\mathbb{R}^d)$. For that, we give three assumptions,\\
\textbf{(H1)} For  any element $\gamma$ in the given bounded set $\Theta$, for any $\xi \in \mathbb{R}^d$, there exists a positive constant $\kappa_0\geq 1$, such that $\kappa_0^{-1}\vert \xi \vert^2\leq \langle \gamma\gamma^*\xi,\xi \rangle\leq \kappa_0\vert  \xi \vert^2$.\\
\textbf{(H2)} $b \in C_b^2$.\\
\textbf{(H3)} There exist two positive constants $\lambda$ and $C$, such that for any $x \in \mathbb{R}^d$, $\langle x,b(x)\rangle \leq -\lambda\vert x\vert^2+C$.\\

\begin{remark}
  Under \textbf{(H1)}, we can deduce that for any $\gamma \in \Theta$, $\vert \gamma \vert>0$, no row of $\gamma$ can be a zero vector. It follows that for any $1 \leq i \leq d$, $\gamma^i (\gamma^i)^{T}>0$, where $\gamma^i$ is the $ith$ row of matrix $\gamma$, and then we have $-\bar{E}[-(B_1^i)^2]=\underline{\sigma_i}^2 =\underset{\gamma \in \Theta}{{\rm inf}} \gamma^i (\gamma^i)^T >0$, where $B^i$ is the $ith$ component of $B$. That is to say, \textbf{(H1)} implies that any component of $G\text{-}$Brownian motion $B$ is non-degenerate. The variance bounds of the $G\text{-}$Brownian motion $\underline{\sigma}^2$, $\bar{\sigma}^2$ can be determined by $\kappa_0$ in \textbf{(H1)}.
\end{remark}

For any fixed $0\leq T <
\infty$, $0 \leq t <T$, any $\varphi \in B_b(\mathbb{R}^d)$, we define a new function $\nu(t,x)$,
\begin{equation}\label{nuty}
    \nu(t,x) = (T_{T,t}\varphi)(x)\coloneqq \underset{\theta \in  \mathcal{A}^{\Theta}_{t,T}}{{\rm sup}} E_P[\varphi(x+\int^T_t b(X_s^{t,x,\theta})ds+\int_t^T\theta_sdW_s],
\end{equation}
where $X_s^{t,x,\theta}=x+\int^s_t b(X_r^{t,x,\theta})dr+\int_t^s\theta_rdW_r$ for any $t\leq s\leq T$ and $T_{T,t}$ are sublinear expectation operators.


The proof will be provided by several lemmas in the following. Prior to this, we will present a lemma to address the stochasticity of $\theta \in \mathcal{A}^{\Theta}_{t,T}$.


\begin{lemma}\label{Lsdeter}
   Let $(P_{s,t}\varphi)(x)=E_P[\varphi( X_s^{t,x,\theta})]$. Then there exists a deterministic function  $\bar{\theta}_r$ and a diffusion process on $(\Omega, \mathcal{F}, P)$,
    \begin{equation}\label{Bary}
  \bar{X}_s^{t,x,\bar{\theta}}=x+\int^s_t b(\bar{X}_r^{t,x,\bar{\theta}})dr+\int^s_t\bar{\theta}_rdW_r,  
\end{equation}
such that for any $\varphi \in  B_b(\mathbb{R}^d)$, $P_{s,t}\varphi$ has the following representation, $(P_{s,t}\varphi)(x)=E_P[\varphi(\bar{X}_s^{t,x,\bar{\theta}})]$.
\end{lemma}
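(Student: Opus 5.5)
The plan is to reduce the adapted control $\theta$ to a deterministic one in three stages: discretise $\theta$ in time, freeze its randomness interval by interval, and pass to the limit using the uniform non-degeneracy from \textbf{(H1)}.

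\textbf{Step 1 (simple controls).} First approximate $\theta\in\mathcal{A}^{\Theta}_{t,T}$ by simple adapted processes
\begin{equation*}
\theta^n_r=\sum_{i}\theta_{t_i}\mathbf{1}_{[t_i,t_{i+1})}(r),
\end{equation*}
with $\theta_{t_i}$ being $\mathcal{F}_{t_i}$-measurable and $\Theta$-valued. Since $b\in C_b^2$ by \textbf{(H2)}, standard $L^2$ estimates give $X^{t,x,\theta^n}_s\to X^{t,x,\theta}_s$ in $L^2(P)$. This yields $P_{s,t}\varphi$ for $\varphi\in C_{b,lip}(\mathbb{R}^d)$ as a limit. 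To pass to $\varphi\in B_b(\mathbb{R}^d)$, I would use non-degeneracy: under \textbf{(H1)} the laws of $X^{t,x,\theta^n}_s$ have densities with Gaussian-type bounds that are uniform in $n$, so weak convergence upgrades to convergence in total variation.

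\textbf{Step 2 (freezing the randomness on each interval).} On $[t_i,t_{i+1})$, condition on $\mathcal{F}_{t_i}$ and use a regular conditional probability. Given $\mathcal{F}_{t_i}$, the increment $\int_{t_i}^{\cdot}\theta_{t_i}dW_r$ is a Brownian motion with constant covariance $\theta_{t_i}\theta_{t_i}^*$, independent of the past. I would then construct $\bar\theta$ backwards in $i$, in the spirit of the DDS time change used in Lemma~\ref{mainleammapart1}.
\begin{itemize}
\item On the last interval, replace the random matrix $\theta_{t_i}$ by a deterministic $\bar\theta_{t_i}\in\Theta$ chosen so that the one-step transition kernel $\int(\cdot)\,p^{\bar\theta}(y,dz)$, integrated against the law of $X_{t_i}$, gives the same value of $E_P[\varphi(X_s)]$.
\item The existence of such a value is argued by continuity and connectedness. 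The map $\gamma\mapsto E_P[\,p^{\gamma}\varphi(X_{t_i})]$ is continuous on $\Theta$, and the conditional mixture lies between its infimum and supremum. A measurable-selection argument (Filippov/Kuratowski--Ryll-Nardzewski) then produces the selection.
\item Iterating over $i$ gives a deterministic step function $\bar\theta^n$ with $E_P[\varphi(X^{\theta^n}_s)]=E_P[\varphi(\bar X^{\bar\theta^n}_s)]$.
\end{itemize}

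\textbf{Step 3 (limit).} The family $\{\bar\theta^n\}$ is bounded in $L^\infty$, so extract a subsequence converging weakly-$*$, or converging in the Young-measure sense after relaxing to $\bar\theta\bar\theta^*$ in the convex hull. Stability of the SDE \eqref{Bary} in its (deterministic) diffusion coefficient, together with the uniform density bounds, then gives the claimed representation for every $\varphi\in B_b(\mathbb{R}^d)$.

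\textbf{Main obstacle.} The hard part is Step 2. The intermediate-value argument fixes $\bar\theta$ for a single test function $\varphi$, whereas the lemma requires one $\bar\theta$ that works for all $\varphi$ simultaneously. That is, the law of $X^{\theta}_s$, which is a mixture over the random $\theta$, must coincide with the law driven by one deterministic control.

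I would first try to match the full transition law using the Gaussian structure of the frozen increments. When $b$ is affine this works: the laws are Gaussian, and matching the covariance $\int\bar\theta\bar\theta^*dr=\int E_P[\theta\theta^*]dr$ (using convexity of $\Theta\Theta^*$) suffices. For general $b$ I would then attempt a perturbation argument in $b$, using \textbf{(H2)}.

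If this fails, the statement can only be salvaged for a fixed $\varphi$, possibly with $\bar\theta$ depending on $(s,x,\varphi)$, which is all that the later sup-representation (\ref{nuty}) actually needs.
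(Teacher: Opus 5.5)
The gap you flag in Step~2 is genuine. No perturbation or measurable-selection argument can close it, because the lemma is false as stated, already for $d=1$ and $b\equiv 0$ (which satisfies \textbf{(H1)}--\textbf{(H2)}). Take $\Theta=[\underline\sigma,\bar\sigma]$ with $0<\underline\sigma<\bar\sigma$, $t=0$, $x=0$, and the adapted control $\theta_r=\underline\sigma$ for $r\in[0,1)$, $\theta_r=\bar\sigma\mathbf{1}_{\{W_1<0\}}+\underline\sigma\mathbf{1}_{\{W_1\geq 0\}}$ for $r\in[1,2]$. Then $X_2=\underline\sigma W_1+\theta_1(W_2-W_1)$, with $W_2-W_1$ independent of $(W_1,\theta_1)$, so
\begin{equation*}
E_P[X_2^3]=3\underline\sigma\,E_P[W_1\theta_1^2]=\frac{3\underline\sigma(\underline\sigma^2-\bar\sigma^2)}{\sqrt{2\pi}}<0.
\end{equation*}
By contrast, for every deterministic $\bar\theta$ the variable $\bar X_2=\int_0^2\bar\theta_r\,dW_r$ is a centred Gaussian. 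Take the bounded odd function $\varphi_N(y)=((y\wedge N)\vee(-N))^3$ with $N$ large; dominated convergence gives $E_P[\varphi_N(X_2)]<0=E_P[\varphi_N(\bar X_2)]$ for all deterministic $\bar\theta$ simultaneously.

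This breaks each point where your proposal needs the statement to be true.
(i) The intermediate-value step fails already on the last interval. There $\gamma\mapsto E_P[(p^\gamma\varphi_N)(X_1)]$ is identically $0$, while the target value is negative. Because $\theta_{t_i}$ may depend on $X_{t_i}$, the mixture $E_P[(p^{\theta_{t_i}}\varphi)(X_{t_i})]$ is only trapped between $E_P[\inf_\gamma(p^\gamma\varphi)(X_{t_i})]$ and $E_P[\sup_\gamma(p^\gamma\varphi)(X_{t_i})]$. It is not trapped between $\inf_\gamma E_P[(p^\gamma\varphi)(X_{t_i})]$ and $\sup_\gamma E_P[(p^\gamma\varphi)(X_{t_i})]$.
(ii) For affine (even zero) $b$, the law of $X^{\theta}_s$ need not be Gaussian once $\theta$ is random and adapted. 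Matching $\int E_P[\theta_r\theta_r^*]\,dr$ therefore only matches second moments.
(iii) Your fallback, with $\bar\theta$ allowed to depend on $(s,x,\varphi)$, fails for $\varphi=\varphi_N$.
Separately, the Gaussian density bounds ``uniform in $n$'' in Step~1 are known for deterministic or Markovian coefficients, not for adapted random $\theta^n$. In the paper they are obtained precisely \emph{through} this lemma, so invoking them here is circular.

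The paper does not discretise at all. It asserts the Chapman--Kolmogorov identity $P_{t,r}P_{s,t}=P_{s,r}$. It then computes the generator at a deterministic starting point by It\^o's formula, which produces $\frac12\,\mathrm{tr}\big(E_P[\theta_t\theta_t^*]D^2\varphi\big)$. Finally it uses the backward Kolmogorov equation to identify $P_{s,t}$ with the semigroup of \eqref{Bary} for $\bar\theta_t=(E_P[\theta_t\theta_t^*])^{1/2}$. This is your covariance matching, carried out for general $b$.

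The step that breaks is Chapman--Kolmogorov: $X^{t,x,\theta}$ is not Markov when $\theta$ depends on the past. In the example above, $P_{1,0}P_{2,1}\varphi_N(0)=0\neq P_{2,0}\varphi_N(0)$. The correct one-dimensional-marginal mimicking result (Gy\"ongy's theorem) gives the diffusion matrix $E_P[\theta_r\theta_r^*\mid X_r=y]$. This depends on $y$ and is in general not of the form $\bar\theta_r\bar\theta_r^*$.

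So neither your route nor the paper's can establish the lemma. What the later sections need is $\theta$-uniform density and gradient bounds, and continuity of the value function $\nu$. These must be proved directly for It\^o processes with uniformly elliptic adapted diffusion coefficients, or extracted from regularity theory for the uniformly parabolic fully nonlinear equation satisfied by $\nu$, rather than through a deterministic $\bar\theta$.
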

\begin{proof}
For the process $X_s^{t,x,\theta}$ and its related Markovian semigroup defined above, it is easy to prove that for any $0\leq r\leq t\leq s< \infty$, $P_{t,r}P_{s,t}\varphi = P_{s,r}\varphi$. Then the infinitesimal generator $L_t$ is,
    \begin{equation}\label{eq:38a}
        (L_t\varphi)(x)=\underset{\delta \downarrow 0}{lim}\frac{(P_{t,t-\delta}\varphi)(x)-\varphi(x)}{-\delta},
    \end{equation}
    if the limit exists.
    We apply It\^{o}'s formula to $\varphi \in C^2$ to have,
    \begin{equation}\label{eq:38b}
        \begin{split}
            \varphi(X_t^{t-\delta,x,\theta})=&\varphi(x)+\sum_{i=1}^d\int_{t-\delta}^{t}D_i\varphi(X_r^{t-\delta,x,\theta})b_i(X_r^{t-\delta,x,\theta})dr\\
            +&\frac{1}{2}\sum_{i=1}^d\sum_{j=1}^d\int_{t-\delta}^{t}a_{ij}(r)D_{ij}\varphi(X_r^{t-\delta,x,\theta})dr\\
            +&\sum_{i=1}^d\sum_{j=1}^d\int_{t-\delta}^{t}D_i\varphi(X_r^{t-\delta,x,\theta})\theta_{ij}(r)dW^j_r,
        \end{split}
    \end{equation}
    where $(a_{ij})_{1\leq i, j\leq d}=\theta\theta^*$, the subscript $i$ of a vector represents the $i$th component of the vector, the superscript $j$ of $W^j$ is the $j$th component of the d-dimensional Brownian motion $W$, the subscript $ij$ of a matrix represents the element located at $(i,j)$ of the matrix.

    Denote $\bar{a}_{ij}(t)=E_p[a_{ij}(t)]$. From \eqref{eq:38b} and the definition of $L_t$ \eqref{eq:38a} , we have
    \begin{equation}\label{lsdefine}
        (L_t\varphi)(x)=\sum_{i=1}^dD_i\varphi(x)b_i(x)
            +\frac{1}{2}\sum_{i=1}^d\sum_{j=1}^d\bar{a}_{ij}(t)D_{ij}\varphi(x).
    \end{equation}
    
    Then for any $0\leq t\leq s \leq \infty$,
    \begin{equation*}
        \begin{split}
            \frac{\partial}{\partial t}(P_{s,t}\varphi)(x)&=\underset{\delta \downarrow 0}{lim}\frac{(P_{s,t}\varphi)(x)-(P_{s,t-\delta}\varphi)(x)}{\delta}\\
           & =\underset{\delta \downarrow 0}{lim}\frac{(P_{s,t}\varphi)(x)-(P_{t,t-\delta}(P_{s,t}\varphi))(x)}{\delta}\\
           & =-L_t(P_{s,t}\varphi)(x).
        \end{split}
    \end{equation*}
   Note that the generator of semigroup $P_{s,t}$ is a second order operator $L_t$ defined by (\ref{lsdefine}) with the diffusion coefficient given by the expectation of $a=(a_{ij})_{1\leq i,j\leq d}$ rather than $a=(a_{ij})_{1\leq i,j\leq d}$ itself. This will help to simplify significantly our argument below, especially in applying Elworthy and Li's technique on the derivative of the Markovian semigroup.
   
    Then it follows immediately that $P_{s,t}\varphi$ satisfies the backward Kolmogorov equation $ \frac{\partial}{\partial t}P_{s,t}\varphi+L_t(P_{s,t}\varphi)=0$.
For the expectation of a symmetric matrix $ (a_{ij})_{1\leq i, j\leq d}$, we know that there exists a deterministic function of a matrix $\bar{\theta}_t$, $0\leq t < \infty$, such that $\bar{a}(t)=\bar{\theta}_t\bar{\theta_t}^*$. Then we consider another diffusion process $\bar{X}_s^{t,x,\bar{\theta}}$ in the same probability space, defined by (\ref{Bary}).
It is easy to see that $P_{s,t}\varphi$ has the representation $(P_{s,t}\varphi)(x)=E_P[\varphi(\bar{X}_s^{t,x,\bar{\theta}})]$.

For any $\varphi \in B_b(\mathbb{R}^d)$, we can find a sequence of Lipschitz functions that converges to $\varphi$ (the construction follows directly from the linear case of Lemma \ref{approximate}, so we omit it here). Furthermore, each Lipschitz function in this sequence can be approximated by smooth functions. By combining these two approximation steps, we obtain the desired result.
\end{proof}

Consider the case where $\Theta$ satisfies the assumption \textbf{(H1)}. For the process $X_T^{t,x,\theta}$, as in Lemma \ref{Lsdeter}, we can define a Markovian semigroup, $(P_{T,t}\varphi)(x)=E_P[\varphi(X_T^{t,x,\theta})]$, $\varphi \in B_b(\mathbb{R}^d)$,
and there exist a deterministic function $\bar{\theta}$ and a diffusion process $\bar{X}_T^{t,x,\bar{\theta}}$ defined as in (\ref{Bary}) on the same probability space,
such that $P_{T,t}\varphi$ has the following representation, $(P_{T,t}\varphi)(x)=E_P[\varphi(\bar{X}_T^{t,x,\bar{\theta}})]$.

It is well known that $\bar{X}_T^{t,x,\bar{\theta}}$ admits a density $p(t,x,T,\xi)$, such that for all $A\in \mathcal{B}(\mathbb{R}^d) $,
 \begin{equation*}
     P[\bar{X}_T^{t,x,\bar{\theta}}\in A]=\int_A p(t,x,T,\xi)d\xi.
 \end{equation*}
For the function $b(\cdot)$, we use the mollifier $\rho$ to make it smooth. We define 
\begin{equation*}
    b^{(1)}(x) \coloneqq b*\rho=\int_{\mathbb{R}^d}b(\xi)\rho(x-\xi)d\xi,
\end{equation*}
and denote by $Q_{t,s}^{(1)}(\xi)$ the deterministic flow solving
\begin{equation*}
    \frac{dQ_{r,t}^{(1)}(\xi)}{dr}=b^{(1)}(Q_{r,t}^{(1)}(\xi)),\  Q_{t,t}^{(1)}(\xi) = \xi,\ 0\leq t <r.
    \end{equation*}
The proof of Theorem 1.2 in \cite{pbound} suggests that there exist constants $\lambda_0,K_0 >0$, only depending on the fixed time $t$, $\kappa_0$ in \textbf{(H1)}, the Lipschitz constant of $b$, $l_b$ and the dimensionality $d$ such that for any $0\leq t \leq r <\infty $, $\xi,\eta \in \mathbb{R}^d$,
\begin{equation*}
   p(t,\xi,r,\eta)\leq K_0g_{\lambda_0}(r-t,Q_{t,r}^{(1)}(\xi)-\eta), 
\end{equation*}
 where $g_{\lambda}(t,x)\coloneqq t^{-\frac{d}{2}}{\rm exp}(-\frac{\lambda\vert x \vert^2}{t})$.
 Note that $K_0$, $\lambda_0$ do not depend on the choice of $\theta$.
 
 If $\Gamma_n \in \mathbb{R}^d$ is a sequence of Borel sets such that $\Gamma_n \downarrow \emptyset$, then for any $t\leq T$, 
 \begin{equation}\label{regularity}
\begin{split}
     (T_{T,t}\mathds{1}_{\Gamma_n})(x)=& \underset{\theta \in  \mathcal{A}^{\Theta}_{t,T}}{{\rm sup}}\int_{\Gamma_n} p(t,\xi,T,\eta)d\eta\\
     \leq & K_0\int_{\Gamma_n} g_{\lambda_0}(T-t,Q_{t,T}^{(1)}(\xi)-\eta)d\eta\to 0, \ as\ n \to \infty.\\
\end{split}
 \end{equation}

We then present a lemma concerning the approximation of bounded functions by Lipschitz functions. 

\begin{lemma}\label{approximate}
    Let $\Theta$ satisfy assumption \textbf{(H1)}. Then, for any $\varphi \in B_{b}(\mathbb{R}^d)$, $0\leq T < \infty$, $0\leq t \leq T$, there exists a sequence of Lipschitz functions $\varphi_n$, such that $\underset{n\to \infty}{lim}\vert (T_{T,t}\varphi_n) (x)-(T_{T,t}\varphi)(x)\vert =0 $. 
\end{lemma}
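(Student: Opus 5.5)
Fix $x$, $t<T$ and $\varphi\in B_b(\mathbb{R}^d)$ with $\vert\varphi\vert\le M$. By (\ref{nuty}), $T_{T,t}$ is a supremum over $\theta\in\mathcal{A}^\Theta_{t,T}$ of the linear operators $P^\theta_{T,t}$. Each $P^\theta_{T,t}$ has the density $p^\theta(t,x,T,\cdot)$, and by the Gaussian upper bound quoted before the lemma this density is dominated by $K_0 g_{\lambda_0}(T-t,Q^{(1)}_{t,T}(x)-\cdot)$, with $K_0,\lambda_0$ independent of $\theta$. Set $\mu(d\eta)\coloneqq K_0 g_{\lambda_0}(T-t,Q^{(1)}_{t,T}(x)-\eta)\,d\eta$. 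This is a finite Borel measure on $\mathbb{R}^d$ that does not depend on $\theta$, and for all $\theta$ and all bounded $f$ we have $\vert P^\theta_{T,t}f(x)\vert\le\int\vert f\vert\,d\mu$. By sub-additivity of the sublinear operator, $\vert T_{T,t}\varphi_n(x)-T_{T,t}\varphi(x)\vert\le \sup_\theta P^\theta_{T,t}\vert\varphi_n-\varphi\vert(x)\le\int\vert\varphi_n-\varphi\vert\,d\mu$. The problem therefore reduces to approximating $\varphi$ in $L^1(\mu)$ by bounded Lipschitz functions, which is a purely linear problem, exactly the "linear case" alluded to in Lemma \ref{Lsdeter}.

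For the approximation I would use standard measure theory. Since $\mu$ is a finite Borel measure on a metric space, it is regular. Hence $C_{b,lip}(\mathbb{R}^d)$ is dense in $L^1(\mu)$, and the approximants can be truncated to satisfy $\vert\varphi_n\vert\le M$. The argument runs in three steps. First approximate $\varphi$ uniformly by simple functions $\sum c_i\mathds{1}_{A_i}$. Next, by regularity, pick closed $F_i\subset A_i\subset U_i$ with $U_i$ open and $\mu(U_i\setminus F_i)<\varepsilon$. Finally replace each $\mathds{1}_{A_i}$ by the Lipschitz Urysohn function $\psi_i(y)=\bigl(1-k\,\mathrm{dist}(y,F_i)\bigr)^+$, taking $k$ large and using $\mathrm{dist}(F_i,U_i^c)$ where needed; alternatively, let $k\to\infty$ and apply dominated convergence, since $\psi_i\downarrow\mathds{1}_{F_i}$. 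This yields $\int\vert\varphi_n-\varphi\vert\,d\mu\to0$, and the displayed inequality then gives the claim.

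An alternative route, closer to the paper's (\ref{regularity}), is to view $\Gamma\mapsto T_{T,t}\mathds{1}_\Gamma(x)$ as a capacity that is continuous from above on decreasing sets and to run the regularity argument with this capacity. Dominating by the single measure $\mu$ is cleaner, so I would use that.

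The main point to check is that the density bound really holds uniformly in $\theta$ for a stochastic $\theta$. Lemma \ref{Lsdeter} reduces each $P^\theta_{T,t}$ to a deterministic $\bar\theta$ with $\bar\theta\bar\theta^*=E_P[\theta\theta^*]$, and this still satisfies \textbf{(H1)} with the same $\kappa_0$ by convexity of the ellipticity bounds. So the bound from \cite{pbound} applies with constants depending only on $\kappa_0$, $l_b$, $d$ and the time horizon, as the paper asserts. One should also note that the flow $Q^{(1)}$ does not depend on $\theta$, so $\mu$ does not either. Given these facts, the rest is routine.
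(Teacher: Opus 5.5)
The step you single out as \emph{the main point to check} is where your argument breaks, and it cannot be repaired within your strategy. Lemma~\ref{Lsdeter} does not hold for adapted $\theta$. Its proof relies on the Chapman--Kolmogorov relation $P_{t,r}P_{s,t}=P_{s,r}$, which fails when $\theta$ is not Markovian. Here is a counterexample. Take $d=1$, $b=0$, $x=0$ and $\Theta=[\underline\sigma,\bar\sigma]$ with $0<\underline\sigma<\bar\sigma$, so that \textbf{(H1)}--\textbf{(H2)} hold. Let $\theta=\underline\sigma$ on $[0,\frac T2]$ and $\theta=\bar\sigma\mathds{1}_{\{W_{T/2}>0\}}+\underline\sigma\mathds{1}_{\{W_{T/2}\le 0\}}$ on $(\frac T2,T]$. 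Then $E_P[(X^{0,0,\theta}_T)^3]=\frac32\underline\sigma T(\bar\sigma^2-\underline\sigma^2)E_P[W_{T/2}^+]>0$, whereas $\bar X^{0,0,\bar\theta}_T$ is a centred Gaussian. So the two laws differ, and the Gaussian bound of \cite{pbound} cannot be transferred to all $\theta\in\mathcal{A}^\Theta_{t,T}$ this way.

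Worse, in this setting no finite measure dominates the laws of $X^{0,x,\theta}_1$, $\theta\in\mathcal{A}^\Theta_{0,1}$. Put $u^\epsilon(s,z)\coloneqq\sup_\theta P(\vert z+\int_0^s\theta_r\,dW_r\vert\le\epsilon)$. Suppose $P(X^{0,x,\theta}_1\in\Gamma)\le\mu(\Gamma)$ for all $\theta$ and $\Gamma$. Integrating $u^\epsilon(1,x-y)\le\mu([y-\epsilon,y+\epsilon])$ over $y$ and using Brownian scaling, $u^\epsilon(1,z)=u^1(\epsilon^{-2},z/\epsilon)$, gives $\int u^1(s,z)\,dz\le 2\mu(\mathbb{R})$ for all $s\ge 1$. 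But $u^1$ solves $\partial_s u=\frac12(\bar\sigma^2(\partial_{zz}u)^+-\underline\sigma^2(\partial_{zz}u)^-)$. Hence $\frac{d}{ds}\int u^1\,dz=\frac12(\bar\sigma^2-\underline\sigma^2)\int(\partial_{zz}u^1)^+\,dz\ge\frac12(\bar\sigma^2-\underline\sigma^2)\sup_z\vert\partial_z u^1(s,z)\vert$. Compare the lower bound $u^1(s,0)\ge P(\underline\sigma\vert W_s\vert\le 1)\gtrsim s^{-1/2}$ with the tail bound $u^1(s,z)\le 2e^{-(\vert z\vert-1)^2/(2\bar\sigma^2 s)}$. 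This shows the derivative is $\gtrsim 1/(s\sqrt{\log s})$ for large $s$, so $\int u^1(s,z)\,dz\to\infty$, a contradiction. Thus the upper envelope $\Gamma\mapsto T_{T,t}\mathbb{I}_\Gamma(x)$ is genuinely non-dominated. Reducing the sublinear problem to density of Lipschitz functions in a single $L^1(\mu)$ is not available.

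This defect is shared with the paper. The only non-monotone step in its proof, making $T_{T,t}\mathbb{I}_{A_i\bigtriangleup\hat B_i}(x)$ small, rests on (\ref{regularity}), i.e.\ on the same $\theta$-uniform Gaussian domination.

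Granting that domination, your route would be the cleaner one. Your single inequality $\vert T_{T,t}\varphi_n(x)-T_{T,t}\varphi(x)\vert\le\int\vert\varphi_n-\varphi\vert\,d\mu$ replaces the paper's three approximation stages. The finiteness of $\mu$ also avoids the paper's tacit requirement $m(A_i)<\infty$ in the rectangle approximation. The paper's Lebesgue-measure bound, in turn, is uniform in $x$, whereas your $\varphi_n$ depend on $x$ through $\mu$. That is curable by mollifying, so that $\varphi_n\to\varphi$ Lebesgue-a.e., and using dominated convergence under each $\mu_x$.

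Finally, for a fixed $x$ the statement as literally written is trivial, including the case $t=T$ that you exclude: $\varphi_n\equiv(T_{T,t}\varphi)(x)$ works by constant preservation. What the paper needs afterwards, to identify $\bar E[\varphi(X_T^{t,x})]$ with $\nu(t,x)$, is an $x$-independent approximation $\varphi_n\to\varphi$ that passes to the limit under both $T_{T,t}$ and $\bar E$. That is exactly what a dominating measure was supposed to supply, and neither your argument nor the paper's establishes it.
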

\begin{proof}
 First, for $\varphi\in B_b(\mathbb{R}^d)$, there exists an increasing sequence of simple functions $\varphi_n^{(1)} \uparrow \varphi$. According to the monotone increasing convergence theorem for subinear expectation, we have that for the process $X_t$, $T_{T,t}\varphi_n ^{(1)}\uparrow T_{T,t}\varphi_n$.

We set $ \varphi_n^{(1)}=\sum^{2^n}_{i=1}y_i\mathbb{I}_{A_i}$, 
    where $A_i$ is a Borel set on $\mathbb{R}^d$. By the result in Taylor \cite{Taylor} (Theorem 4.4), we know that there exist sets $B_i$, which can be expressed as a finite union of half-open disjoint rectangles, such that for any $\epsilon > 0$, the Lebesgue measure $ m(A_i \bigtriangleup B_i ) < \epsilon$. 
    We set $B_i=\bigcup_{j=1}^K B_i^j$,
    where $(B_i^j)_{1\leq j\leq K}$ is a sequence of half-open disjoint rectangles. Let $\hat{B}_i^j$ be the set of $B_i^j$ without its boundary. Let $\hat{B}_i=\bigcup_{j=1}^K\hat{B}_i^j$,
    then $\hat{B}_i$ is an open set, and $m(A_i \bigtriangleup \hat{B}_i )$ can also be sufficiently small.
    
    Define $\varphi^{(2)}_n=\sum^{2^n}_{i=1}y_i\mathbb{I}_{\hat{B}_i}$, then we have
    \begin{equation*}
        \vert  (T_{T,t}\varphi_n^{(2)})(x)-(T_{T,t}\varphi_n^{(1)})(x) \vert \leq \sum^{2^n}_{i=1}y_i(T_{T,t}\mathbb{I}_{A_i \bigtriangleup \hat{B}_i})(x).
    \end{equation*}
    $T_{T,t}\mathbb{I}_{A_i \bigtriangleup \hat{B}_i}$ can be sufficiently small because $m(A_i \bigtriangleup \hat{B}_i )$ can be sufficiently small by (\ref{regularity}).
    
    Now we notice that $\mathbb{I}_{\hat{B}_i}$ is lower continuous. By Proposition 4.2.2 in Heinonen et al. \cite{Sobolev}, we know that there exists an increasing sequence of Lipschitz continuous functions approximating $\mathbb{I}_{\hat{B}_i}$. So for any $n\in\mathbb{N}_+$, we can find a sequence of Lipschitz continuous functions $\varphi^{(3)}_{mn}$ such that $\varphi^{(3)}_{mn} \uparrow \varphi^{(2)}_n$, as $m \to \infty$ and then $(T_{T,t}\varphi_{mn}^{(3)})(x)\uparrow (T_{T,t}\varphi_n^{(2)})(x)$.

    In summary, there exists a sequence of Lipschitz continuous functions $\varphi_n$ with the desired property.   
    \end{proof}

 For the $\bar{T}_{T,t}$ defined by (\ref{barTv}), $\varphi_n\in C_{b,lip}(\mathbb{R}^d)$ defined in Lemma \ref{approximate}, we define $\bar{\nu}_n(t,x)=(\bar{T}_{T,t}\varphi_n)(x)$ and $\nu_n(t,x)=(T_{T,t}\varphi_n)(x)$. By Lemma \ref{mianlemma}, we know that $\bar{\nu}_n(t,x)=\nu_n(t,x)$. It follows from Lemma \ref{approximate} that $\bar{\nu}_n(t,x)=\nu_n(t,x)=(T_{T,t}\varphi_n)(x) \to (T_{T,t}\varphi)(x)=\nu(t,x)$. Set $\bar{\nu}(t,x)\coloneqq (T_{T,t}\varphi)(x) = \nu(t,x)$. The semigroup $\bar{T}_{T,t}$ is then extended to $B_b(\mathbb{R}^d)$ by
 \begin{equation*}
     (\bar{T}_{T,t}\varphi)(x)=\bar{\nu}(t,x)=\bar{E}[\varphi(X_T^{t,x})].
 \end{equation*}
 It is easy to see that $\bar{E}$ is a sublinear expectation and $\bar{\nu}(0,x)=\varphi(x)$. We will see later that $\nu$ is the viscosity solution of (\ref{mianpdeinap}) when the initial function $\varphi \in B_b(\mathbb{R}^d)$.

Let $\mathcal{A}^{\Theta}_{t,T}$ be defined as before with $\Theta$ satisfying assumption \textbf{(H1)}. We have the following result.

\begin{lemma}\label{part1}
    Under the assumptions (\textbf{H1}) and \textbf{(H2)}, for any $\varphi \in B_b(\mathbb{R}^d)$, we have  
    \begin{equation}\label{devtxinlemma}
   \nu(t,x) \coloneqq \underset{\theta \in  \mathcal{A}^{\Theta}_{t,T}}{{\rm sup}} E_P[\varphi(x+\int^T_t b(X_s^{t,x,\theta})ds+\int_t^T\theta_sdW_s)]  
 \end{equation}
  is continuous in $x$, for any $x \in \mathbb{R}^d
  $, $0 \leq t < T <\infty$.   
\end{lemma}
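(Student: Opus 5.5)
The plan is to obtain a gradient bound for the linear semigroups that is uniform in $\theta$ and in $\varphi$ (with $\|\varphi\|_\infty$ fixed), and then pass it through the supremum, as in Lemma \ref{mainleammapart1}. For each fixed $\theta\in\mathcal{A}^{\Theta}_{t,T}$, set $(P_{T,t}\varphi)(x)=E_P[\varphi(X_T^{t,x,\theta})]$. By Lemma \ref{Lsdeter}, $(P_{T,t}\varphi)(x)=E_P[\varphi(\bar X_T^{t,x,\bar\theta})]$, where $\bar\theta$ is deterministic and $\bar\theta_r\bar\theta_r^*=E_P[\theta_r\theta_r^*]$. Since $\Theta$ satisfies \textbf{(H1)} and that condition is preserved under taking convex combinations of $\gamma\gamma^*$, we get $\kappa_0^{-1}|\xi|^2\le\langle\bar\theta_r\bar\theta_r^*\xi,\xi\rangle\le\kappa_0|\xi|^2$ for all $r$. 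So $\bar\theta_r$ is invertible with $|\bar\theta_r^{-1}|\le\kappa_0^{1/2}$, and the bound does not depend on $\theta$. This reduction to deterministic, additive and uniformly non-degenerate noise is what makes the Bismut--Elworthy--Li formula available.

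The first step is the derivative flow. Under \textbf{(H2)}, the map $x\mapsto\bar X_s^{t,x,\bar\theta}$ is differentiable. Its Jacobian $J_s=\nabla_x\bar X_s^{t,x,\bar\theta}$ solves the random ODE $\dot J_s=\nabla b(\bar X_s)J_s$ with $J_t=I$, so $|J_s|\le e^{\|\nabla b\|_\infty(s-t)}$ pathwise. This bound is again independent of $\theta$ and $x$.

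The second step applies the Bismut--Elworthy--Li formula. For $\varphi\in C_b^1$ and a direction $h\in\mathbb{R}^d$,
\begin{equation*}
\nabla_h(P_{T,t}\varphi)(x)=\frac{1}{T-t}E_P\Big[\varphi(\bar X_T^{t,x,\bar\theta})\int_t^T\langle\bar\theta_r^{-1}J_rh,dW_r\rangle\Big].
\end{equation*}
The It\^o isometry together with the two bounds above gives
\begin{equation*}
|\nabla(P_{T,t}\varphi)(x)|\le \frac{C(\kappa_0,\|\nabla b\|_\infty,T)}{\sqrt{T-t}}\|\varphi\|_\infty .
\end{equation*}
The right-hand side involves $\varphi$ only through $\|\varphi\|_\infty$. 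I would therefore extend the bound from smooth $\varphi$ to all $\varphi\in B_b(\mathbb{R}^d)$ by approximation. The smooth-then-Lipschitz approximation used at the end of Lemma \ref{Lsdeter} and in Lemma \ref{approximate} works here, because the approximating sequence converges for the linear semigroup (the transition law has a density, which gives the absolute continuity (\ref{regularity})) and $\|\varphi_n\|_\infty$ stays bounded. Alternatively, one can derive the formula directly for bounded measurable $\varphi$, since its right-hand side makes sense for such $\varphi$ and the density of $\bar X_T$ is smooth in $x$. Either route yields $|(P_{T,t}\varphi)(x_1)-(P_{T,t}\varphi)(x_2)|\le C(T-t)^{-1/2}\|\varphi\|_\infty|x_1-x_2|$, uniformly in $\theta$.

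The last step is the same as in Lemma \ref{mainleammapart1}:
\begin{equation*}
|\nu(t,x_1)-\nu(t,x_2)|\le\sup_{\theta\in\mathcal{A}^{\Theta}_{t,T}}|E_P[\varphi(X_T^{t,x_1,\theta})]-E_P[\varphi(X_T^{t,x_2,\theta})]|\le \frac{C}{\sqrt{T-t}}\|\varphi\|_\infty|x_1-x_2|,
\end{equation*}
which proves continuity in $x$, indeed Lipschitz continuity, for every $t<T$. One subtlety must be handled in this step: the deterministic $\bar\theta$ from Lemma \ref{Lsdeter} is built for one $\theta$ and does not depend on the starting point, because $\bar a(r)=E_P[\theta_r\theta_r^*]$. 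So the same $\bar\theta$ serves both $x_1$ and $x_2$, and the comparison is legitimate.

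I expect the main obstacle to be justifying the Bismut--Elworthy--Li step rigorously for merely bounded measurable $\varphi$, with a constant free of $\theta$. In particular, the representation in Lemma \ref{Lsdeter} is obtained through the generator, and I need the time-inhomogeneous, deterministic-coefficient version of the formula with only the lower bound from \textbf{(H1)} on $\bar\theta\bar\theta^*$. I would first prove the bound for $\varphi\in C_b^2$, where It\^o's formula and the Kolmogorov equation are available. I would then pass to $B_b$ using the density bound $p\le K_0g_{\lambda_0}$, which holds uniformly in $\theta$, and dominated convergence.
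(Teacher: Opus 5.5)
Your proposal follows the paper's proof of Lemma \ref{part1} step for step. You replace each $\theta$ by the deterministic $\bar\theta$ of Lemma \ref{Lsdeter}, use the Elworthy--Li formula to get a gradient bound that sees $\varphi$ only through $\|\varphi\|_\infty$, extend it to $B_b(\mathbb{R}^d)$, and pass the $\theta$-uniform bound through the supremum. Two of your details are tidier than the paper's: bounding $|\bar\theta_r^{-1}|\le\kappa_0^{1/2}$ via \textbf{(H1)} plus convexity, and the rate $(T-t)^{-1/2}$. The real obstacle, however, is not the one you anticipate: Bismut--Elworthy--Li for measurable $\varphi$ is routine once the noise coefficient is deterministic. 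It is the reduction itself. You take Lemma \ref{Lsdeter} for granted, exactly as the paper does, and it is false for general adapted $\theta$. So the argument, yours and the paper's, only covers deterministic controls.

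When $\theta$ depends on the past of $W$, $X^{t,x,\theta}$ is not Markov. The identity $P_{t,r}P_{s,t}=P_{s,r}$ asserted in the proof of Lemma \ref{Lsdeter} then fails, and so does the backward Kolmogorov equation built on it.

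Here is a concrete counterexample. Take $d=1$, $b\equiv 0$, $\Theta=[\underline{\sigma},\bar{\sigma}]$ with $0<\underline{\sigma}<\bar{\sigma}$, and $x=0$. Let $\theta_r=\underline{\sigma}$ on $[0,1]$ and $\theta_r=\eta:=\bar{\sigma}\mathbf{1}_{\{W_1>0\}}+\underline{\sigma}\mathbf{1}_{\{W_1\le 0\}}$ on $(1,2]$. Then $X_2=\underline{\sigma}W_1+\eta(W_2-W_1)$ is centred, but $E_P[X_2^3]=3\underline{\sigma}E_P[W_1\eta^2]=3\underline{\sigma}(\bar{\sigma}^2-\underline{\sigma}^2)/\sqrt{2\pi}\neq 0$. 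By contrast, $\int_0^2\bar\theta_r\,dW_r$ is centred Gaussian for every deterministic $\bar\theta$, so no $\bar\theta$ reproduces the law of $X_2$.

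Without the reduction, the identity $E_P[\varphi(X_T)\,|\,\mathcal{F}_s]=(P_{T,s}\varphi)(X_s)$ is unavailable. Both the martingale representation and the Elworthy--Li identity rest on it. Consequently your $\theta$-uniform estimate $|E_P[\varphi(X_T^{t,x_1,\theta})]-E_P[\varphi(X_T^{t,x_2,\theta})]|\le C(T-t)^{-1/2}\|\varphi\|_\infty|x_1-x_2|$ has no proof. The same goes for the uniform density bound (\ref{regularity}) you invoke, which the paper also obtains through Lemma \ref{Lsdeter}.

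A route that needs no Markov structure is to compare suprema rather than individual controls, using Girsanov. Set $Y_s=X_s^{t,x_2,\theta}+\frac{T-s}{T-t}(x_1-x_2)$, so that $Y_t=x_1$ and $Y_T=X_T^{t,x_2,\theta}$. Then $dY_s=b(Y_s)\,ds+\theta_s\,d\tilde W_s$, where $\tilde W_s=W_s+\int_t^s\beta_r\,dr$ and $\beta_r=\theta_r^{-1}\big(b(X_r^{t,x_2,\theta})-b(Y_r)-\frac{x_1-x_2}{T-t}\big)$.

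By \textbf{(H1)}, $|\beta_r|\le\kappa_0^{1/2}(l_b+(T-t)^{-1})|x_1-x_2|$. Let $Z$ be the density of the measure $Q$ under which $\tilde W$ is a Brownian motion. For small $|x_1-x_2|$ this gives $E_P|Z-1|\le C(T-t)^{-1/2}|x_1-x_2|$, hence $E_P[\varphi(X_T^{t,x_2,\theta})]\le E_Q[\varphi(Y_T)]+\|\varphi\|_\infty E_P|Z-1|$.

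One issue remains. Under $Q$ the control $\theta$ is adapted to the filtration of $W$, not necessarily to that of $\tilde W$. So $E_Q[\varphi(Y_T)]$ is only bounded by the weak-formulation value at $x_1$. You would need either to work with weak controls throughout, or to show that the weak and strong suprema agree for $\varphi\in B_b(\mathbb{R}^d)$.
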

\begin{proof}
    First, we consider the function $\varphi \in C_b^2(\mathbb{R}^d)$. 
  
According to Lemma \ref{Lsdeter}, we know that for any fixed $0\leq T<\infty$, $0\leq t \leq T $, there exists a process $\bar{X}_T^{t,x,\bar{\theta}}$, such that the Markovian semigroup $(P_{T,t}\varphi)(x)=E_P[\varphi( X_T^{t,x,\theta})]$ has representation $(P_{T,t}\varphi)(x)=E_P[\varphi( \bar{X}_T^{t,x,\bar{\theta}})]$. 

We apply It\^{o}'s formula on $(P_{T,s}\varphi)(\bar{X}_{s}^{t,x,\bar{\theta}})$,
\begin{equation*}
  \begin{split}
      &(P_{T,T}\varphi)(\bar{X}_{T}^{t,x,\bar{\theta}})-(P_{T,t}\varphi)(x) \\
      =&\int_t^T\frac{\partial}{\partial s}(P_{T,s}\varphi)(\bar{X}_{s}^{t,x,\bar{\theta}})ds+\int_{t}^T L_s(P_{T,s}\varphi)(\bar{X}_{s}^{t,x,\bar{\theta}})ds + \int_t^T d(P_{T,s}\varphi)_{\bar{X}_{s}^{t,x,\bar{\theta}}} (\bar{\theta}_sdW_s),
  \end{split}
\end{equation*}
where $d(P_{T,s}\varphi)_{\bar{X}_{s}^{t,x,\bar{\theta}}}$ is the derivative of $P_{T,s}\varphi$ at $\bar{X}_{s}^{t,x,\bar{\theta}}$. 
By the backward Kolmogorov equation, we conclude that $ \varphi(\bar{X}_{T}^{t,x,\bar{\theta}})= (P_{T,t}\varphi)(x)+ \int_t^T d(P_{T,s}\varphi)_{\bar{X}_{s}^{t,x,\bar{\theta}}} (\bar{\theta}_sdW_s)$.

Given a direction of derivative $\nu_t$ in $\mathbb{R}^d$, we denote $\nu_s$ by $\nu_s=D_x\bar{X}_{s}^{t,x,\bar{\theta}}(\nu_t)$ and define $\delta P_{s,t}(d\varphi): \mathbb{R}^d \rightarrow L(\mathbb{R}^d,R)$ by $(\delta P_{s,t}(d\varphi))_{x}(\nu_t)=E_P[(d\varphi)_{\bar{X}_{s}^{t,x,\bar{\theta}}}(\nu_s)]$. And formal differentiation under expectation suggests that $d(P_{T,t}\varphi)_{x}(\nu_t)=(\delta P_{s,t}(d\varphi))_{x}(\nu_t)$.

Then employing the idea of Elworthy and Li \cite{derivatives}, we have
\begin{equation*}
    \begin{split}
       & E_P[ \varphi(\bar{X}_{T}^{t,x,\bar{\theta}})\int_t^T\bar{\theta}_s^{-1}\nu_sdW_s]\\
       =&E_P[(P_{T,t}\varphi)(x)\int_t^T\bar{\theta}_s^{-1}\nu_sdW_s]+E_P[ \int_t^T d(P_{T,s}\varphi)_{\bar{X}_{s}^{t,x,\bar{\theta}}} (\bar{\theta}_sdW_s)\int_t^T\bar{\theta}_s^{-1}\nu_sdW_s]\\
       =&E_P[ \int_t^T (\delta P_{T,s})(d\varphi)_{\bar{X}_{s}^{t,x,\bar{\theta}}}(\nu_s) ds]\\
       =&\int_t^T \delta P_{s,t} (\delta P_{T,s})(d\varphi)_{x}(\nu_t) ds\\
       =&\int_t^T (\delta P_{T,t})(d\varphi)_{x}(\nu_t)ds\\
       =&(T-t)(\delta P_{T,t})(d\varphi)_{x}(\nu_t).\\
    \end{split}
\end{equation*}
Then we choose $t$ such that $t$ is strictly smaller than $T$ and we can conclude that
\begin{equation}\label{dvareq}
    \begin{split}
        (\delta P_{T,t})(d\varphi)_{x}(\nu_t)&=\frac{1}{T-t}E_p[ \varphi(\bar{X}_{T}^{t,x,\bar{\theta}})\int_t^T\bar{\theta}_s^{-1}\nu_sdW_s]\\
        &\leq \frac{\parallel \varphi \parallel_{\infty}}{T-t}(E_p[\int_t^T \vert v_s\vert^2 tr(\bar{\theta}_s^{-1}(\bar{\theta}_s^{-1})^{*})ds])^{\frac{1}{2}}.\\
    \end{split}
\end{equation}
Because $\nu_s$ satisfies the function $dv_s = Db(\bar{X}^{t,x,\bar{\theta}}_s)(v_s)ds$, by Gronwall's inequality, we have $\vert \nu_s \vert^2 \leq 2\vert v_t\vert^2 {\rm exp}
(2l_b^2(s-t))$, where $l_b$ is the Lipschitz constant of $b$. From Remark \ref{remark}, we have $tr(\bar{\theta}^{-1}_s(\bar{\theta}^{-1}_s)^*) \leq \underline{\sigma}^{-2}$.
So we have
\begin{equation*}
    \begin{split}
      (\delta P_{T,t})(d\varphi)_{x}(\nu_t) 
     \leq & \frac{\parallel \varphi \parallel_{\infty}}{T-t}(\int_t^T \underline{\sigma}^{-2}2\vert v_t\vert^2{\rm exp}(2l_b^2(s-t))ds)^{\frac{1}{2}}\\
     \leq & \frac{\parallel \varphi \parallel_{\infty}\vert v_t\vert}{l_b\underline{\sigma}(T-t)}({\rm exp}(l_b^2T)-1).
    \end{split}
\end{equation*}
And then we can find a constant $C$, which depends on $\underline{\sigma}$, $l_b$ and is uniform for all $\Theta \in \mathcal{A}^{\Theta}_{t,T}$ such that
\begin{equation}\label{dptvarbound}
    \parallel d(P_{T,t}\varphi)\parallel \leq  \frac{C\parallel \varphi \parallel_{\infty}}{T-t}{\rm exp}(CT).
\end{equation}
That is to say, if $\varphi \in C_b^1(\mathbb{R}^d)$, then we have $P_{T,t}\varphi \in C_b^1(\mathbb{R}^d)$ and $P_{T,t}\varphi$ is Lipschitz continuous in $x$ for any $0\leq t < T$. Similarly as in \cite{derivatives}, estimate (\ref{dptvarbound}) can be extended to any bounded measurable function $\varphi$ since the final estimate only involves the $\parallel\cdot \parallel_{\infty}$ norm.

For any given $\varphi \in B_b(\mathbb{R}^d)$, 
any $x_1,x_2 \in\mathbb{R}^d$, fixed $0 < T < \infty$, and any $0 \leq t < T$, we have for any $\theta \in \mathcal{A}^{\Theta}_{t,T}$,
\begin{equation}\label{finiales1}
    \begin{split}
        &\vert \nu(t,x_1)-\nu(t,x_2)\vert\\
         \leq & \underset{\theta \in  \mathcal{A}^{\Theta}_{t,T}}{{\rm sup}}\vert E_P[\varphi(x_1+\int_t^Tb(X_s^{t,x,\theta})ds+\int_t^T\theta_sdW_s)]\\
         &-E_P[\varphi(x_2+\int_t^Tb(X_s^{t,x,\theta})ds+\int_t^T\theta_sdW_s)] \vert.\\ 
         \leq &  \frac{C\parallel \varphi \parallel_{\infty}}{T-t}{\rm exp}(CT)\vert x_1-x_2\vert.
    \end{split}
\end{equation}

Note that the finial estimate on (\ref{finiales1}) is uniform in $\theta \in \mathcal{A}^{\Theta}_{t,T}$. So for any $x_1,x_2 \in\mathbb{R}^d$, $\varphi \in B_b(\mathbb{R}^d)$, any fixed $0 < T < \infty$, $0\leq t< T$, we can find a constant $C$, which depends on $t$ and $T$, $\parallel \varphi \parallel_{\infty}$, $l_b$ and $\underline{\sigma}^2$, such that $\vert \nu(t,x_1)-\nu(t,x_2)\vert\leq C \vert x_1-x_2\vert$.
\end{proof}

\begin{lemma}\label{part2}
 Under the assumptions \textbf{(H1)} and \textbf{(H2)}, for any $\varphi \in B_b(\mathbb{R}^d)$,
  \begin{equation*}
   \nu(t,x) \coloneqq \underset{\theta \in  \mathcal{A}^{\Theta}_{t,T}}{{\rm sup}} E_P[\varphi(x+\int^T_t b(X_s^{t,x,\theta})ds+\int_t^T\theta_sdW_s)]  
 \end{equation*}
  is 
  continuous in $t$ for any fixed $0\leq T<\infty$, $0\leq t < T $, $x \in \bar{O}$, where $\bar{O}$ is a bounded subset of $\mathbb{R}^d$. 
\end{lemma}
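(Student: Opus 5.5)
We prove continuity in $t$ by combining the uniform spatial Lipschitz bound from Lemma \ref{part1} with a dynamic programming (semigroup) identity in the backward time variable, mirroring the time-continuity argument of Lemma \ref{mainleammapart1}. Fix $T$, a bounded set $\bar O$, and $0\le t_2<t_1<T$. The first step is to establish, as in the proof of Lemma \ref{mainleammapart1} (conditioning on $\mathcal{F}_{t_1}$ and interchanging supremum and expectation via Yan's commutation theorem), the identity
\begin{equation*}
\nu(t_2,x)=\underset{\theta \in \mathcal{A}^{\Theta}_{t_2,t_1}}{{\rm sup}}E_P\big[\nu(t_1,X_{t_1}^{t_2,x,\theta})\big],
\end{equation*}
that is, $T_{T,t_2}=T_{t_1,t_2}T_{T,t_1}$. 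This uses that $X^{t_2,x,\theta}$ is a strong solution flow, so that restarted at $t_1$ it solves the same equation with initial value $X_{t_1}^{t_2,x,\theta}$, with controls on $[t_1,T]$ independent of $\mathcal{F}_{t_1}$ after conditioning.

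Next we estimate
\begin{equation*}
\vert \nu(t_2,x)-\nu(t_1,x)\vert\le \underset{\theta}{{\rm sup}}\,E_P\big[\vert \nu(t_1,X_{t_1}^{t_2,x,\theta})-\nu(t_1,x)\vert\big].
\end{equation*}
By Lemma \ref{part1} (estimate (\ref{finiales1})), $\nu(t_1,\cdot)$ is Lipschitz with constant $L=C\Vert\varphi\Vert_\infty e^{CT}/(T-t_1)$, uniformly in $\theta$. Hence the right side is bounded by
\begin{equation*}
L\,\underset{\theta}{{\rm sup}}\,E_P\vert X_{t_1}^{t_2,x,\theta}-x\vert \le L\Big(\Vert b\Vert_\infty (t_1-t_2)+\big(\kappa_0 d\,(t_1-t_2)\big)^{1/2}\Big),
\end{equation*}
using \textbf{(H2)} (boundedness of $b$) and the bound on $\Theta$ from \textbf{(H1)} together with the It\^o isometry. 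This gives $\frac12$-H\"older continuity in $t$ on any interval $[0,T-\epsilon]$, uniformly for $x\in\bar O$; boundedness of $\bar O$ matters only if one uses the Lipschitz bound on $b$ instead of $\Vert b\Vert_\infty$, since then the moment of $X-x$ depends on $\vert x\vert$. The case $t_1<t_2$ is symmetric.

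The main obstacle is rigorously justifying the dynamic programming identity for merely bounded measurable $\varphi$. Measurability of $\omega\mapsto \nu(t_1,X_{t_1})$ is harmless because $\nu(t_1,\cdot)$ is continuous by Lemma \ref{part1}. The harder point is showing that the supremum over adapted $\theta$ on $[t_2,T]$ splits into the two stages, i.e.\ that $\epsilon$-optimal controls can be pasted measurably in $x$. To handle this, we first prove the identity for $\varphi\in C_{b,lip}$, where it follows from Lemma \ref{mianlemma} and the known flow property of $G$-SDEs in \cite{pengbook}. We then pass to $\varphi\in B_b$ using the Lipschitz approximations of Lemma \ref{approximate} and monotone convergence. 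Some care is needed here, since Lemma \ref{approximate} gives only pointwise convergence; but the uniform Gaussian upper bound (\ref{regularity}) makes the convergence dominated uniformly in $\theta$, which suffices to pass the limit through $T_{t_1,t_2}$.
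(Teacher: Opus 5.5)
Your argument is correct, granting (as the paper itself does) Lemma \ref{part1}, Lemma \ref{approximate} and the $\theta$-uniform Gaussian upper bound behind (\ref{regularity}), but it follows a different route from the paper's proof of this lemma.

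\textbf{The paper's route.} The paper uses no dynamic programming here. For each fixed $\theta$ it passes to the deterministic $\bar\theta$ of Lemma \ref{Lsdeter} and derives the second-order Bismut--Elworthy--Li bound (\ref{twicebound}) on $D^2P_{T,t}\varphi$. It then uses the backward Kolmogorov equation $\partial_tP_{T,t}\varphi=-L_tP_{T,t}\varphi$ to show each linear expectation is Lipschitz in $t$, uniformly in $\theta$, with a constant of order $(T-t)^{-3}$ at the later of the two times. Only then does it take the supremum.

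\textbf{Your route.} You work with the sublinear operator itself. You combine the identity $T_{T,t_2}=T_{t_1,t_2}T_{T,t_1}$, the first-order smoothing estimate (\ref{finiales1}) for $\nu(t_1,\cdot)$, and an elementary moment bound. This is really the paper's Section 5 argument for Lipschitz $\varphi$ (the $\frac12$-H\"older lemma after Proposition \ref{ProA.8}), with the Lipschitz constant of $\varphi$ replaced by that of $\nu(t_1,\cdot)$. It is not the heat-kernel argument of Lemma \ref{mainleammapart1}. Proposition \ref{ProA.8} also gives you the Lipschitz-case identity directly.

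\textbf{Comparison.}
\begin{itemize}
\item Your route needs no second derivatives of the flow and has only a $(T-t)^{-1}$ blow-up.
\item It never differentiates $t\mapsto E_P[\varphi(X_T^{t,x,\theta})]$ along a fixed adapted control. That is the delicate step which forces the paper through the deterministic reduction and the Kolmogorov equation for $P_{T,t}$.
\item The price is a $\frac12$-H\"older modulus instead of a Lipschitz one, which is harmless for Theorem \ref{revth}.
\item You also need the semigroup identity for bounded measurable data. The paper simply asserts this in the proof of Theorem \ref{revth}, so your extension step does genuine work.
\end{itemize}

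\textbf{Making the extension step precise.}
\begin{itemize}
\item Take $\varphi_n=\varphi*\rho_{1/n}$, which is bounded and Lipschitz with $\varphi_n\to\varphi$ Lebesgue-a.e.
\item Use $\vert T\varphi_n-T\varphi\vert\le T\vert\varphi_n-\varphi\vert$, valid for any sublinear $T$.
\item Apply the density bound $p\le K_0g_{\lambda_0}$ twice with dominated convergence. This gives $\sup_\theta E_P[(T_{T,t_1}\vert\varphi_n-\varphi\vert)(X^{t_2,x,\theta}_{t_1})]\to0$ and $(T_{T,t_2}\vert\varphi_n-\varphi\vert)(x)\to0$.
\end{itemize}
These two limits pass both sides of the identity to the limit.
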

\begin{remark}
    In this theorem, we treat $x$ as the variable of the function $\nu$ and assume that $ x \in  \bar{O}$, a bounded subset of $\mathbb{R}^d $. This assumption is made to satisfy the sufficient condition in the proof. However, in practical applications, $x$ is usually taken as the initial value of some SDEs or $G\text{-}$SDEs, which is a fixed value in $\mathbb{R}^d $. And when we consider Assumption \ref{Ass4} in the next theorem, we will consider $x$ in a close set $C$. Therefore, the assumption that $x \in \bar{O}$ does not affect the subsequent proof, so we still can assume the initial value $x \in \mathbb{R}^d$ in the following content.
\end{remark}
\begin{proof}
Similarly, we will use the result in Lemma \ref{Lsdeter}  and for any fixed $0\leq T<\infty$, $0\leq t < T $, there exists a process $\bar{X}_T^{t,x,\bar{\theta}}$ defined as in (\ref{Bary}). We use the same symbol as in Lemma \ref{part1}. First, we assume that $\varphi \in C^2_b$.

In this part, we still need to use the idea in Elworthy and Li \cite{derivatives}. For that we need $\bar{X}_T^{t,x,\bar{\theta}}$ satisfies the following two conditions,\\
1. Let $t >0$ be the initial time and $T>t$. For each given $x, u_t \in \mathbb{R}^d$, 
\begin{equation*}
    \int_t^T E_{P}\vert D_x\bar{X}_s^{t,x,\bar{\theta}}(u_t)  \vert^2ds\leq c\vert u_t \vert^2.
\end{equation*}
2. For each $T>t$, each given $u_t, v_t\in \mathbb{R}^d$,
\begin{equation*}
    \underset{t < s\leq T}{{\rm sup}}\ \underset{x\in\bar{O}}{{\rm sup}}(E_{P}[\vert D^2_x\bar{X}_s^{t,x,\bar{\theta}}](u_t,v_t)\vert) \leq c\vert u_t\vert\vert v_t \vert,
\end{equation*}
and
\begin{equation*}
   \underset{t< s\leq T}{{\rm sup}}\ \underset{x\in\bar{O}}{{\rm sup}}(E_{P}[\vert D_x\bar{X}_s^{t,x,\bar{\theta}}\vert]) \leq c,
\end{equation*}
where $c$ is a constant. Because we assume that $b \in C_b^2$ and $\bar{\theta}_t$ is a deterministic bounded function of time, it can be easily verified that the above two conditions hold true. We differentiate $P_{T,t}\varphi$ for any $t < s < T$,
\begin{equation}\label{d2pt}
    \begin{split}
        D^2(P_{T,t}\varphi)(\bar{X}_t^{t,x,\bar{\theta}})(u_t,v_t)=&E_P[D^2(P_{T,s}\varphi)(\bar{X}_s^{t,x,\bar{\theta}})(D_x\bar{X}_s^{t,x,\bar{\theta}}(u_t),D_x\bar{X}_s^{t,x,\bar{\theta}}(v_t))]\\
        &+E_P[D(P_{T,s}\varphi)(\bar{X}_s^{t,x,\bar{\theta}})(D^2_x\bar{X}_s^{t,x,\bar{\theta}}(u_t, v_t))].
    \end{split}
\end{equation}

For fixed $0 \leq T< \infty$, $0 \leq t < T$, we apply It\^{o}'s formula again on $d(P_{T,s}\varphi)$, $t < s \leq T$, we have (\cite{FlowsonRiemannian}, \cite{derivatives}),
\begin{equation*}
    d(\varphi)_{\bar{X}_T^{t,x,\bar{\theta}}}(v_T)= d(P_{T,t}\varphi)_{\bar{X}_t^{t,x,\bar{\theta}}}(v_t)+\int_t^TD^2(P_{T,s}\varphi)(\bar{X}_s^{t,x,\bar{\theta}})(\bar{\theta}_sdW_s)(v_s).
\end{equation*}

Using the conditions we mentioned above, we have

\begin{equation*}
    E_P[(d\varphi)_{\bar{X}_T^{t,x,\bar{\theta}}}(v_T)\int_t^T \bar{\theta}^{-1}_su_sdW_s]=E_P[\int_t^TD^2(P_{T,s}\varphi)(\bar{X}_s^{t,x,\bar{\theta}})(u_s,v_s)ds].
\end{equation*}

By (\ref{d2pt}), we have
\begin{equation*}
 \begin{split}
     (T-t)D^2(P_{T,t}\varphi)(x)(u_t,v_t)=&E_P[(d\varphi)_{\bar{X}_T^{t,x,\bar{\theta}}}(v_T)\int_t^T \bar{\theta}^{-1}_su_sdW_s]\\
     &+E_P[\int_t^T D(P_{T,s}\varphi)(\bar{X}_s^{t,x,\bar{\theta}})(D^2_x\bar{X}_s^{t,x,\bar{\theta}}(u_t, v_t))  ds].
 \end{split}
\end{equation*}

Then we divide the process into two parts. First, we transit the dynamics from the initial state $t$ to step $\frac{T+t}{2}$,

\begin{equation}
    \begin{split}
      \frac{T-t}{2}D^2(P_{\frac{T+t}{2},t}\varphi)(\bar{X}_t^{t,x,\bar{\theta}})(u_t,v_t)&=E_P[(d\varphi)_{\bar{X}_\frac{T+t}{2}^{t,x,\bar{\theta}}}(v_\frac{T+t}{2})\int_t^\frac{T+t}{2} \bar{\theta}^{-1}_su_sdW_s]\\
     +&E_P[\int_t^\frac{T+t}{2} D(P_{\frac{T+t}{2},s}\varphi)(\bar{X}_s^{t,x,\bar{\theta}})(D^2_x\bar{X}_s^{t,x,\bar{\theta}}(u_t, v_t))  ds].  
    \end{split}
\end{equation}

Secondly, we replace $\varphi$ by $P_{T,\frac{T+t}{2}}\varphi$ and we have,
\begin{equation*}
     \begin{split}
         \frac{T-t}{2}D^2(P_{T,t}\varphi)(\bar{X}^{t,x,\bar{\theta}}_{t})(u_{t},v_{t})=& E_P[D(P_{T,\frac{T+t}{2}}\varphi)(\bar{X}^{t,x,\bar{\theta}}_{\frac{T+t}{2}})(v_{\frac{T+t}{2}})\int_t^{\frac{T+t}{2}} \bar{\theta}^{-1}_su_sdW_s]\\
         &+E_P[\int_t^{\frac{T+t}{2}} D(P_{T,s}\varphi)(\bar{X}^{t,x,\bar{\theta}}_{t})D^2_x\bar{X}^{t,x,\bar{\theta}}_{s}(u_t,v_t)ds].
     \end{split}
 \end{equation*}

By (\ref{dvareq}) in Lemma \ref{part1}, we have
\begin{equation*}
 D(P_{T,\frac{T+t}{2}}\varphi)(\bar{X}^{t,x,\bar{\theta}}_{\frac{T+t}{2}})(v_{\frac{T+t}{2}})=\frac{2}{T-t}E_P[\varphi(\bar{X}_{T}^{t,x,\bar{\theta}})\int_{\frac{T+t}{2}}^T \bar{\theta}^{-1}_sv_sdW_s].   
\end{equation*}

So,
\begin{equation*}
    \begin{split}
   & D^2(P_{T,t}\varphi)(x)(u_{t },v_{t})\\
   =&\frac{4}{(T-t)^2}E_P[\varphi(\bar{X}_{T}^{t,x,\bar{\theta}})\int_{\frac{T+t}{2}}^T \bar{\theta}^{-1}_sv_sdW_s\int_t^{\frac{T+t}{2}} \bar{\theta}^{-1}_su_sdW_s]\\
   &+\frac{2}{T-t}E_P[\int_t^{\frac{T+t}{2}} D(P_{T,s}\varphi)(\bar{X}^{t,x,\bar{\theta}}_{s})D^2_x\bar{X}^{t,x,\bar{\theta}}_{s}(u_t,v_t)ds].
    \end{split}
\end{equation*}
For the right-hand side of the equation, 
\begin{equation*}
    \begin{split}
     &\frac{4}{(T-t)^2}E_{P}[\varphi(\bar{X}_T^{t,x,\bar{\theta}})\int_{\frac{T+t}{2}}^{T}\bar{\theta}_s^{-1}v_sdW_s\int_t^{\frac{T+t}{2}}\bar{\theta}_s^{-1}u_sdW_s]    \\
     \leq & \frac{4\parallel\varphi\parallel_{\infty}}{(T-t)^2}(E_{P}[\int_{\frac{T+t}{2}}^{T}\bar{\theta}_s^{-2}\vert v_s\vert^2 ds\int_t^{\frac{T+t}{2}}\bar{\theta}_s^{-2}\vert u_s\vert ^2ds])^{\frac{1}{2}}\\
     \leq &  \frac{4\parallel\varphi\parallel_{\infty}}{\underline{\sigma}^2(T-t)^2}\vert u_t\vert \vert v_t\vert e^{C_bT},\\
    \end{split}
\end{equation*}
where $C_b$ is a constant dependent on the function $b$.
By (\ref{dptvarbound}) in Lemma \ref{part1},
\begin{equation*}
     \frac{2}{T-t}E_P[\int_t^{\frac{T+t}{2}} D(P_{T,s}\varphi)(\bar{X}^{t,x,\bar{\theta}}_{s})D^2_x\bar{X}^{t,x,\bar{\theta}}_{s}(u_t,v_t)ds] 
    \leq  \frac{C^{\prime}\parallel\varphi\parallel_{\infty}}{(T-t))^3}\vert u_t \vert \vert v_t\vert {\rm exp}(C^{\prime}T),
\end{equation*}
where $C^{\prime}$ is a constant.
In summary, there exists a constant $C$, such that,
\begin{equation}\label{twicebound}
  \vert D^2(P_{T,t}\varphi)(x) \vert \leq C\parallel\varphi\parallel_{\infty}\vert u_t \vert \vert v_t\vert(\frac{1}{(T-t)^3}+\frac{1}{(T-t)^2}). 
\end{equation}
Note this constant $C$ depends on $\underline{\sigma}^2$, uniformly for all $\Theta \in \mathcal{A}^{\Theta}_{0,T}$. 
Then we focus on our original problem. Similarly to the case of $d(P_{T,t}\varphi)$, (\ref{twicebound}) holds for any $\varphi \in B_b(\mathbb{R}^d)$.

Based on the above discussion and the backward Kolmogorov equation, for any $\varphi \in B_b(\mathbb{R}^d)$, any $x \in \bar{O}$, any fixed $0< T \leq \infty$, $0<t_1\leq t_2 < T$, we have
\begin{equation*}
\begin{split}
  & \vert E_P[\varphi(x+\int^T_{t_1}b(X_s^{t,x,\theta})ds+\int^T_{t_1}\theta_sdW_s)]\\
  &-E_P[\varphi(x+\int^T_{t_2}b(X_s^{t,x,\theta})ds+\int^T_{t_2}\theta_sdW_s)] \vert \\
  \leq & \underset{t\in[t_1,t_2]}{{\rm sup}}\vert  \frac{\partial }{\partial t}(P_{T,t}\varphi)\vert\vert t_1 -t_2 \vert\\
  = & \underset{t\in[t_1,t_2]}{{\rm sup}} \vert L_t(P_{T,t}\varphi) \vert\vert t_1 -t_2 \vert\\
  \leq &\bar{C}\parallel\varphi\parallel_{\infty}(\frac{1}{(T-t_2)^3}+\frac{1}{(T-t_2)^2}+\frac{1}{(T-t_2)})\vert t_1 -t_2 \vert,
    \end{split}
\end{equation*}
where $\bar{C}$ is a constant depending on $\underline{\sigma}^2$, the function $b$, $T$ and the norm of given directions $u_t$, $v_t$.

So we have
\begin{equation*}
    \begin{split}
        &\vert \nu(t_1,x)-\nu(t_2,x)\vert\\
         \leq & \underset{\theta \in  \mathcal{A}^{\Theta}_{0,T}}{{\rm sup}}\vert E_P[\varphi(x+\int^T_{t_1}b(X_s^{t,x,\theta})ds+\int^T_{t_1}\theta_sdW_s)]\\
         &-E_P[\varphi(x+\int^T_{t_2}b(X_s^{t,x,\theta})ds+\int^T_{t_2}\theta_sdW_s)] \vert\\ 
        \leq & \bar{C}\parallel\varphi\parallel_{\infty}(\frac{1}{(T-t_2)^3}+\frac{1}{(T-t_2)^2}+\frac{1}{(T-t_2)})\vert t_1 -t_2 \vert.
    \end{split}
\end{equation*}

That is to say $\nu(t,x)$ is continuous in $t$ for any $0\leq t
 < T$. 
 \end{proof}

\begin{remark}
    In the proofs of Lemma \ref{mainleammapart1} and Lemma \ref{part2}, we obtained a different modulus of continuity of $\nu(t,\cdot)$ with respect to $t$ from the one in Lemma \ref{mainleammapart1}. They are different for technical reasons. If we push the latter method in the proof of Lemma \ref{mainleammapart1}, we should be able to obtain the Lipshitz continuity with respect to $t$ as well. But they both are adequate for their perposes.

\end{remark}

\begin{theorem}\label{revth}
For any fixed $0\leq T<\infty$, $0\leq t < T$, let 
\begin{equation}\label{yTt}
    X_T^{t,x}=x+\int_t^Tb(X_s^{t,x})ds+B_T-B_t,
\end{equation}
where $X_t^{t,x}=x $ and $x\in \mathbb{R}^d$. Then under assumptions \textbf{(H1)} and \textbf{(H2)}, for any $\varphi \in B_b(\mathbb{R}^d)$, we have,
 \begin{equation*}
  \bar{\nu}(t,x)=\bar{E}[\varphi(X_T^{t,x})]=\underset{\theta \in  \mathcal{A}^{\Theta}_{t,T}}{{\rm sup}} E_P[\varphi(x+\int^T_t b(X_s^{t,x,\theta})ds+\int_t^T\theta_sdW_s)],
 \end{equation*}
is the viscosity solution of (\ref{mianpdeinap}).
\end{theorem}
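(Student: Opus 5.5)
The plan mirrors the argument at the end of Lemma \ref{mainleammapart1}: reduce to a continuous terminal datum by running the dynamics for a short time, then invoke the known viscosity theory for continuous data.

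First, for $\varphi\in C_{b,lip}(\mathbb{R}^d)$ the identity $\bar{E}[\varphi(X_T^{t,x})]=\nu(t,x)$ is Lemma \ref{mianlemma}. The viscosity property of $\bar{\nu}$ for (\ref{mianpdeinap}) is the standard result of Peng \cite{pengbook} for $G$-SDEs with Lipschitz coefficients. For general $\varphi\in B_b(\mathbb{R}^d)$, $\bar{\nu}(t,x)$ has been \emph{defined} as $\nu(t,x)=(T_{T,t}\varphi)(x)$. This definition is consistent by Lemma \ref{approximate}: take Lipschitz $\varphi_n$ with $T_{T,t}\varphi_n\to T_{T,t}\varphi$, so that $\bar{E}[\varphi_n(X_T^{t,x})]\to\nu(t,x)$. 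Consequently the representation formula holds, and the only thing left to prove is the viscosity property.

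Second, I will establish the dynamic programming principle (two-parameter semigroup property) for $T_{T,t}$. For $t<s<T$ the claim is
\begin{equation*}
\nu(t,x)=\sup_{\theta\in\mathcal{A}^{\Theta}_{t,s}}E_P\big[\nu(s,X_s^{t,x,\theta})\big]=(T_{s,t}T_{T,s}\varphi)(x).
\end{equation*}
The argument copies the conditioning computation in Lemma \ref{mainleammapart1}. Condition on $\mathcal{F}_s$, use the flow property of $X^{t,x,\theta}$, and interchange the supremum with the conditional expectation via Yan's commutation theorem \cite{Yanjiaan}. This is legitimate because the family of conditional expectations over controls on $[s,T]$ is upward directed: controls can be pasted on $\mathcal{F}_s$-measurable sets.

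Third, fix $t<T$ and choose $s\in(t,T)$. By Lemma \ref{part1}, $\psi:=\nu(s,\cdot)=T_{T,s}\varphi$ is bounded and Lipschitz, with Lipschitz constant depending on $T-s$. On $[t',s]$ with $t'\leq s$, the DPP together with Lemma \ref{mianlemma} gives $\nu(t',x)=\bar{E}[\psi(X_s^{t',x})]$. By Peng's theory this is the viscosity solution of $\partial_t u+F(D^2u,Du,u,x)=0$ on $[0,s]$ with terminal value $\psi$. Since the viscosity property is local and $s<T$ was arbitrary, $\nu$ is a viscosity solution on $[0,T)\times\mathbb{R}^d$. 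Lemmas \ref{part1} and \ref{part2} provide the joint continuity of $\nu$ on $[0,T)\times\bar{O}$ that the definition of viscosity solution requires. The terminal condition is understood in the sense $\nu(T,\cdot)=\varphi$; since $\varphi$ is merely measurable, it cannot be attained continuously.

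The step I expect to be the main obstacle is the DPP for measurable $\varphi$. There are two problems: $\nu(s,\cdot)$ must be measurable in order to compose it with $X_s^{t,x,\theta}$, and the interchange of supremum and expectation must be justified. Lemma \ref{part1} resolves the measurability issue, since $\nu(s,\cdot)$ is continuous. For the interchange, I would first prove the DPP for Lipschitz $\varphi_n$, where it is classical. I would then pass to the limit using the approximating sequence of Lemma \ref{approximate}, with the convergence uniform in $\theta$ thanks to the heat-kernel bound (\ref{regularity}). This gives $T_{s,t}T_{T,s}\varphi_n\to T_{s,t}T_{T,s}\varphi$. Here I would use monotone convergence of the sublinear operator along the increasing approximations, which, through the upper kernel bound, also controls the symmetric-difference errors.
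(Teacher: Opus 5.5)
Your proposal is correct and follows essentially the same route as the paper. It derives continuity of $\nu$ from Lemmas \ref{part1} and \ref{part2}, gets the semigroup property $T_{T,t}=T_{s,t}T_{T,s}$ from the dynamic programming principle and Yan's commutation theorem, and then applies the known viscosity theory to the continuous (indeed Lipschitz) terminal datum $T_{T,s}\varphi$ at an intermediate time $s<T$, using locality; the only difference is that you justify the DPP for merely measurable $\varphi$ (Lipschitz approximation plus the uniform-in-$\theta$ kernel bound (\ref{regularity})), which the paper simply asserts.
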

\begin{proof}
    From the two lemmas above, we know that $\nu(t,x)$ is continuous in both time and space for any fixed $0 \leq T< \infty$, $0 \leq t<T  $, $x \in \mathbb{R}^d$, $\varphi \in B_b(\mathbb{R}^d)$.
    
    Let $({T}_{T,t}\varphi)(x)= \nu(t,x)= \underset{\theta \in  \mathcal{A}^{\Theta}_{t,T}}{{\rm sup}} E_P[\varphi(x+\int^T_t b(X_s^{t,x,\theta})ds+\int^T_t\theta_sdW_s)]$.
    Similarly, according to the dynamic programming principle and Yan's commutation theorem, for any $0 \leq t<T  $, there exists a constant $\delta>0$ such that the semigroup property holds: $T_{T,t}=T_{T-\delta,t}T_{T,T-\delta}$.
    It follows from the results in \cite{krylov1986}, \cite{krylov1987} and \cite{pengbook} that $\nu(t,x)$ is a viscosity solution of (\ref{PDE}) or (\ref{mianpdeinap}).
\end{proof}

Now we turn to the consideration of $X_t$, $t\geq 0$, with assumptions \textbf{(H1)}, \textbf{(H2)} and \textbf{(H3)}. The relevant SMS is defined as follows 
\begin{equation}\label{bbbarTv}
    \bar{T}_{t}\varphi(x)=\tilde{\nu}(t,x)=\bar{E}[\varphi(X_{t})],\ \varphi \in B_b(\mathbb{R}^d).
\end{equation} 

\begin{theorem}\label{xiajie} Suppose the process $X_t$ defined in (\ref{GSDE}) satisfies \textbf{(H1)}, \textbf{(H2)} and \textbf{(H3)}, 
 then the ISE of the SMS $\bar{T_t}$, defined in (\ref{bbbarTv}), exists uniquely.
\end{theorem}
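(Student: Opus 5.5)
We will apply Theorem \ref{coninth}, so the task reduces to verifying Assumptions \ref{Ass3}, \ref{Ass3.1} and \ref{Ass4} for $\bar{T}_t$ defined in (\ref{bbbarTv}). Throughout we use Theorem \ref{revth} (with time-homogeneity, $\bar{T}_t\varphi(x)=\bar{E}[\varphi(X_t^{0,x})]$) to write
\begin{equation*}
(\bar{T}_t\varphi)(x)=\underset{\theta\in\mathcal{A}^{\Theta}_{0,t}}{{\rm sup}}E_P[\varphi(X^{0,x,\theta}_t)],\qquad -(\bar{T}_t(-\varphi))(x)=\underset{\theta\in\mathcal{A}^{\Theta}_{0,t}}{{\rm inf}}E_P[\varphi(X^{0,x,\theta}_t)],
\end{equation*}
for every $\varphi\in B_b(\mathbb{R}^d)$. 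For the unbounded Lyapunov function $V(x)=|x|^2$ we extend this by monotone convergence applied to the truncations $V\wedge N$.

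\emph{Lyapunov condition.} With $V(x)=|x|^2$, It\^o's formula applied to $X^{0,x,\theta}$ gives
\begin{equation*}
d|X_s|^2=\bigl(2\langle X_s,b(X_s)\rangle+tr(\theta_s\theta_s^*)\bigr)ds+2\langle X_s,\theta_sdW_s\rangle .
\end{equation*}
By \textbf{(H3)} and Remark \ref{remark}, the drift term is at most $-2\lambda|X_s|^2+2C+\bar{\sigma}^2$. Taking expectations, after localizing the martingale by a stopping time, and applying Gronwall, we get
\begin{equation*}
E_P|X^{0,x,\theta}_t|^2\leq e^{-2\lambda t}|x|^2+\frac{2C+\bar{\sigma}^2}{2\lambda},
\end{equation*}
uniformly in $\theta$. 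Taking the supremum over $\theta$ yields Assumption \ref{Ass3} for any $a>0$, with $\gamma=e^{-2\lambda a}$ and $K=(2C+\bar{\sigma}^2)/(2\lambda)$. The same bound for $t\le a$ gives Assumption \ref{Ass3.1} with $\delta=1$.

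\emph{Local Doeblin condition on $C=\{|x|^2\le R\}$.} This is the main step. By Lemma \ref{Lsdeter}, for each fixed $\theta$ we have $E_P[\varphi(X^{0,x,\theta}_a)]=E_P[\varphi(\bar X^{0,x,\bar\theta}_a)]$, where $\bar\theta$ is deterministic and, by \textbf{(H1)} (since $\bar a=E[\theta\theta^*]$ is a convex combination), satisfies $\kappa_0^{-1}I\le\bar\theta\bar\theta^*\le\kappa_0 I$. We therefore need a Gaussian-type \emph{lower} bound on the density $p(0,x,a,\eta)$ of this nondegenerate diffusion with drift $b\in C_b^2$, uniform over all such deterministic $\bar\theta$ and over $x\in C$. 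The bound we aim for is
\begin{equation*}
p(0,x,a,\eta)\ge K_1\,g_{\lambda_1}\bigl(a,Q_{0,a}^{(1)}(x)-\eta\bigr),
\end{equation*}
the two-sided counterpart of the upper bound from \cite{pbound} used in (\ref{regularity}), with constants depending only on $\kappa_0$, $\|b\|_{C^2_b}$, $d$ and $a$.

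We would first try to derive this lower bound from the Girsanov theorem. Since $b$ is bounded, the law of $\bar X$ is equivalent to that of the Gaussian process $x+\int_0^\cdot\bar\theta\,dW$, whose density at time $a$ is Gaussian with covariance between $\kappa_0^{-1}aI$ and $\kappa_0 aI$. The Girsanov density involves $\bar\theta^{-1}b$, which is bounded uniformly. A Cauchy--Schwarz or Jensen argument on the exponential martingale then bounds $P(\bar X_a\in A)$ from below by a power of the Gaussian measure of $A$. Because this only gives a power rather than a linear lower bound, it is safer to appeal directly to the two-sided Aronson-type estimates of \cite{pbound}. Since $x$ ranges over the bounded set $C$ and $b$ is bounded, $Q^{(1)}_{0,a}(x)$ stays in a bounded set, so
\begin{equation*}
\inf_{x\in C}g_{\lambda_1}\bigl(a,Q^{(1)}_{0,a}(x)-\eta\bigr)\ge c\,e^{-2\lambda_1|\eta|^2/a}.
\end{equation*}
Normalizing the right-hand side gives a Gaussian probability measure $\nu$ and a constant $\alpha\in(0,1)$, both independent of $x$ and of $\theta$. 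Taking the infimum over $\theta$ yields Assumption \ref{Ass4}, and $R>2K/(1-\gamma)$ can be chosen freely.

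With all three assumptions verified, Theorem \ref{coninth} gives existence and uniqueness of the ISE of $\bar T_t$. We expect the obstacle to be the uniform heat-kernel lower bound. If the citation to \cite{pbound} only provides the upper bound, we would instead use the chaining argument: combine a Gaussian lower bound on short time intervals, derived from the parametrix, with the Chapman--Kolmogorov equation for the deterministic-coefficient diffusion $\bar X$.
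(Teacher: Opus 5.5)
\textbf{Relation to the paper.} Your route is the paper's route. The Lyapunov estimate uses $V(x)=|x|^2$. You apply the classical It\^o formula to each $X^{0,x,\theta}$ and take the supremum after truncating $V$; this is legitimate, since suprema commute with increasing limits. The paper applies $G$-It\^o directly, and the constants $\gamma=e^{-2\lambda a}$, $K=(2C+\bar\sigma^2)/(2\lambda)$ coincide. The Doeblin bound comes from Theorem \ref{revth}, Lemma \ref{Lsdeter} and the lower Gaussian estimate of Theorem 1.2 in \cite{pbound}. The paper invokes that theorem for exactly this lower bound, so your hedge about the citation is unnecessary. Your infimum over $x\in C$, which makes $\nu$ independent of $x$, is genuinely needed for Assumption \ref{Ass4}. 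It improves on the paper, whose $\nu$ is built from $Q^{(1)}_{0,t}(x)$ and therefore depends on $x$.

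\textbf{The gap.} The real gap, which you inherit from the paper, is the step ``by Lemma \ref{Lsdeter}, $E_P[\varphi(X^{0,x,\theta}_a)]=E_P[\varphi(\bar X^{0,x,\bar\theta}_a)]$ with $\bar\theta$ deterministic''. For adapted random $\theta$ this is false. Take $d=1$, $b\equiv 0$, $\theta_s=1$ on $[0,1]$ and $\theta_s=\zeta:=1+\mathds{1}_{\{W_1\le 0\}}$ on $(1,2]$. Then $X_2=W_1+\zeta(W_2-W_1)$ has mean $0$, but $E_P[X_2^3]=3E_P[W_1\zeta^2]=-9/\sqrt{2\pi}\neq 0$. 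So $X_2$ is not distributed like any centred Gaussian $\int_0^2\bar\theta_s\,dW_s$, and a linear dissipative drift does not change this.

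The lemma's proof fails because $X^{t,x,\theta}$ is not Markov. Hence $P_{s,t}$ has no semigroup property, and $E_P[\theta\theta^*]$ read off at the initial time does not govern the later evolution. Consequently the estimate of \cite{pbound}, which concerns diffusions with deterministic coefficients, does not apply to the law of $X^{0,x,\theta}_a$.

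Meanwhile $-(\bar T_a(-\varphi))(x)$ is an infimum over \emph{adapted} controls, and in general it cannot be reduced to an infimum over deterministic ones. So the minorisation uniform over $\mathcal{A}^{\Theta}_{0,a}$ that Assumption \ref{Ass4} requires is unproved. Neither your Girsanov remark nor your chaining fallback reaches it, since both concern a single diffusion with a deterministic diffusion coefficient. What is missing is a lower bound on the time-$a$ law of $X^{0,x,\theta}$ that is uniform over all adapted $\Theta$-valued controls. That is a statement of Krylov--Safonov type, not an Aronson estimate for one diffusion.

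\textbf{A minor point.} A bounded $b$ cannot satisfy \textbf{(H3)}, because $\langle x,b(x)\rangle\ge-\Vert b\Vert_\infty|x|$. So \textbf{(H2)} has to be read as $Db$, $D^2b$ bounded. The boundedness of $Q^{(1)}_{0,a}(C)$ should then be justified by the Lipschitz property of $b$ and Gronwall, not by $\Vert b\Vert_\infty$.
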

\begin{proof}
   To establish the existence of ISE, it is essential to demonstrate whether the SMS $\bar{T}_t$ defined above satisfies Assumptions \ref{Ass3}, \ref{Ass3.1} and \ref{Ass4}. 
   We set $V(x)=\vert x \vert^2$ and apply $G\text{-}$It\^{o}'s formula to $e^{2\lambda t}V(X_t)$, where $\lambda$ is the constant in \textbf{(H3)}. Then we have,
   \begin{equation*}
      \begin{split}
           & e^{2\lambda T}\vert X_T\vert^2-\vert x \vert^2\\
           =&\int_0^T2\lambda e^{2\lambda s}\vert X_s \vert^2ds + \int_0^T 2e^{2\lambda s}\langle X_s, b(X_s)\rangle ds+ \int_0^T 2e^{2\lambda s} \langle X_ s, dB_s\rangle + \int_0^T e^{2\lambda s}d\langle B \rangle_s\\
           \leq & \int_0^T2\lambda e^{2\lambda s}\vert X_s \vert^2ds + \int_0^T 2e^{2\lambda s}(-\lambda \vert X_s \vert^2+C) ds\\
           &+ \int_0^T 2e^{2\lambda s} \langle X_ s, dB_s\rangle + \int_0^T e^{2\lambda s}d\langle B \rangle_s\\
           =&  \int_0^T 2Ce^{2\lambda s}ds+ \int_0^T 2e^{2\lambda s} \langle X_ s, dB_s\rangle + \int_0^T e^{2\lambda s}d\langle B \rangle_s.\\
      \end{split}
   \end{equation*}
   And $\int_0^T 2e^{2\lambda s} \langle X_ s, dB_s\rangle$ is a $G\text{-}$martingale, so we have $\bar{E}[\int_0^T 2e^{2\lambda s} \langle X_ s, dB_s\rangle]=0$.
By the proposition of the quadratic variation process $\langle B \rangle$, we have 
   \begin{equation*}
           \bar{E}[\int_0^T e^{2\lambda s}d\langle B \rangle_s]\leq\bar{\sigma}^2\bar{E}[\int_0^Te^{2\lambda s}ds ]
    =\frac{\bar{\sigma}^2}{2\lambda}(e^{2\lambda T}-1)  .
   \end{equation*}
So $\bar{E}[e^{2\lambda T}\vert X_T\vert^2]\leq \vert x \vert^2 + (\frac{C}{\lambda}+\frac{\bar{\sigma}^2}{2\lambda})(e^{2\lambda T}-1)$. 
That is $(\bar{T}_TV)(x)=\bar{E}[\vert X_T\vert^2]\leq r\vert x \vert^2 + K $,
where $r=e^{-2\lambda T}$ and $r\in (0,1)$, $K= \frac{2C+\bar{\sigma}^2}{2\lambda}$ and $K>0$.

The above formula shows that $\bar{T}_t$ satisfies Assumptions \ref{Ass3} and \ref{Ass3.1}. Next, we verify Assumption \ref{Ass4}.

By Theorem \ref{revth}, we know that for any $\varphi\in B_b(\mathbb{R}^d)$, $\varphi \geq 0$, for any fixed $0\leq t <\infty$, we have,
\begin{equation*}
   \begin{split}
       & -(\bar{T}_t(-\varphi))(x)=-\bar{E}[-\varphi(X_t)]
       = \underset{\theta \in  \mathcal{A}^{\Theta}_{0,t}}{{\rm inf}} E_P[\varphi(x+\int^t_0 b(X_s^{\theta,x})ds+\int^t_0\theta_sdW_s)],
   \end{split}
\end{equation*} 
where $X_t^{\theta,x}=x+\int^t_0 b(X_s^{\theta})ds+\int^t_0\theta_sdW_s$.

Similarly, as in Lemma \ref{Lsdeter}, we can define a Markovian semigroup, $(P_{t,0}\varphi)(x)=E_P[\varphi(X_t^{\theta,x})]$,
and there exist a deterministic function $\bar{\theta}$ and a diffusion process $\bar{X}_t^{\bar{\theta},x}$ on the same probability space,
\begin{equation}\label{barxbartheta}
   \bar{X}_t^{\bar{\theta},x}=x+\int^t_0 b(\bar{X}_s^{\theta,x})ds+\int^t_0\bar{\theta}_sdW_s,
\end{equation}
 such that $P_{t,0}\varphi$ has the following representation, $(P_{t,0}\varphi)(x)=E_P[\varphi(\bar{X}_t^{\bar{\theta},x})]$.
 By Lemma \ref{Lsdeter}, the deterministic function $\bar{\theta}$ still satisfies \textbf{(H1)}.
 
 Then we focus on (\ref{barxbartheta}) first. It is well-known that the solution $\bar{X}_t^{\bar{\theta},x}$ for (\ref{barxbartheta}), admits a density $p(0,x,t,\xi)$, such that for all $A\in \mathcal{B}(\mathbb{R}^d) $,
 \begin{equation*}
     P[X_t^{\theta}\in A]=\int_A p(0,x,t,\xi)d\xi.
 \end{equation*}

 Then by Theorem 1.2 in \cite{pbound}, there exist constants $\lambda_0\in (0,1]$, $K_0 \geq 1$ only depending on the fixed time $t$, $\kappa_0$, the Lipschitz constant of $b$, $l_b$ and the dimensionality $d$, such that for any $0\leq s < t< \infty$ and $\xi,\eta \in \mathbb{R}^d$,
 \begin{equation*}
     p(s,\xi,t,\eta)\geq K_0^{-1}g_{\lambda_0^{-1}}(t-s,Q_{s,t}^{(1)}(\xi)-\eta ),
 \end{equation*}
  where $g_{\lambda}$, $ Q_{s,t}^{(1)}$ defined as before. 
  
So we have 
\begin{equation*}
   \begin{split}
        E_P[\varphi(\bar{X}_t^{\bar{\theta},x})] &= \int_{\mathbb{R}^d} p(0,x,t,\eta)\varphi(\eta)d\eta\\
        & \geq \int_{\mathbb{R}^d} K_0^{-1} t^{-\frac{d}{2}}{\rm exp}(-\frac{\vert Q_{0,t}^{(1)}(x)-\eta \vert^2}{\lambda_0t}) \varphi(\eta)d\eta.
   \end{split}
\end{equation*}
By Lemma 1.1 in \cite{pbound} and the Gronwall inequality, we know that $\vert Q_{0,t}^{(1)}(x)-\eta \vert^2$ is bounded, $\lambda_0>0$ is a constant. In fact, $\lambda_0$  depends on $\kappa_0$, so we can find $\lambda_0$ such that $\lambda_0 > \epsilon > 0$.

Then we can appropriately adjust $K_0$ and find a constant $K_1 > 1$, which only depends on $t$, $\kappa_0$, $l_b$ and the dimensionality $d$, such that
\begin{equation*}
  K_1 \int_{\mathbb{R}^d} K_0^{-1} t^{-\frac{d}{2}}{\rm exp}(-\frac{\vert Q_{0,t}^{(1)}(x)-\eta \vert^2}{\lambda_0t})d\eta  = 1.
\end{equation*}
Let
\begin{equation*}
    E_{\nu}\varphi=K_1 \int_{\mathbb{R}^d} K_0^{-1} t^{-\frac{d}{2}}{\rm exp}(-\frac{\vert Q_{0,t}^{(1)}(x)-\eta \vert^2}{\lambda_0t}) \varphi(\eta) d\eta ,
\end{equation*}
Then $E_P[\varphi(\bar{X}_t^{\bar{\theta},x})]\geq K_1^{-1} E_{\nu}\varphi, \ K_1^{-1} \in (0,1)$.
Subsequently, we can derive \\$ E_P[\varphi(X_t^{\theta,x})]\geq K_1^{-1} E_{\nu}\varphi, \ K_1^{-1} \in (0,1)$. 

 So we have,
\begin{equation*}
    \begin{split}
        & -(\bar{T}_t(-\varphi))(x)=-\bar{E}[-\varphi(X_t)]\\
        =&\underset{\theta \in  \mathcal{A}^{\Theta}_{0,t}}{{\rm inf}} E_P[\varphi(x+\int^t_0 b(X_s^{\theta,x})ds+\int^t_0\theta_sdW_s)]\\
        \geq & K_1^{-1} E_{\nu}\varphi.
    \end{split}
\end{equation*}
The last line holds because the constants $\lambda_0$, $K_0$, $K_1$ depend only
on time $t$, $\kappa_o$, $l_b$ and $d$. They do not depend on the specific choice of $\theta \in  \mathcal{A}^{\Theta}_{0,t}$.


So far, we have verified that the semigroup $(\bar{T}_t)_{t\geq 0}$ satisfies both Assumptions \ref{Ass3}, \ref{Ass3.1} and \ref{Ass4}. So, the ISE of SMS $\bar{T}_t$ exist uniquely by applying Theorem \ref{coninth}.
\end{proof}

\end{subsection}

\section{The proof of Lemma \ref{mianlemma}}

We will give the proof of Lemma \ref{mianlemma}, which is based on Section 3 in Denis et al. \cite{DenishU}. We use a similar framework to extend their result by the nonlinear Feynman-Kac formula. For convenience, we use the Einstein notation.
First, we will give some results on the nonlinear Feynman-Kac formula in Chapter 5, Peng \cite{pengbook}.

Let $\Omega=C_0^d(\mathbb{R}^+)$ be the space of all $\mathbb{R}^d$-valued continuous paths $(\omega_t)_{t\geq 0}$, with $\omega_0=0$, $\Omega_T=\{\omega_{.\wedge T}:\omega\in\Omega \}$. $(B_s)_{s\geq0 }$ is the d-dimensional $G\text{-}$Brownian motion.

Consider the following $G\text{-}$SDE, for fixed $0 \leq t \leq T < \infty$, $s \in [t,T]$,
\begin{equation}
       d X^{t,\xi}_s = b(X^{t,\xi}_s)ds+h_{ij}(X^{t,\xi}_s)d \left \langle B \right \rangle^{ij}_s+ \sigma _j(X^{t,\xi}_s)dB^j_s,\ \ 
       X^{t,\xi}_t=\xi,\\
\end{equation}
where $\xi \in L^2_G(\Omega_t; \mathbb{R}^d)$ and $b$, $h_{ij}$, $\sigma_j$: $\mathbb{R}^d \mapsto \mathbb{R}^d$ are given Lipschitz functions.

Then consider the associated $G\text{-}$BSDE,
\begin{equation}
    Y_s^{t,\xi}=\bar{E}[\Phi(X^{t,\xi}_T)+\int_s^T f(X^{t,\xi}_r,Y_r^{t,\xi})dr + \int_s^T g_{ij}(X^{t,\xi}_r,Y_r^{t,\xi})d \left \langle B^i,B^j \right \rangle_r |\Omega_s],
\end{equation}
where $\Phi : \mathbb{R}^d \to \mathbb{R}$, $f$, $g_{ij}: \mathbb{R}^d \times \mathbb{R} \mapsto \mathbb{R} $ are given Lipschitz functions.
\begin{proposition}(\cite{pengbook})
    For each $\xi,\xi^{\prime} \in L_G^2(\Omega_t;\mathbb{R}^d)$, we have, for each $s \in [t,T]$,
    \begin{equation}
        \bar{E}[\vert X^{t,\xi}_s - X^{t,\xi^{\prime}}_s\vert ^2 | \Omega_t]\leq C\vert \xi-\xi^{\prime}\vert^2
    \end{equation}
and 
\begin{equation}
    \bar{E}[\vert X^{t,\xi}_s\vert ^2 | \Omega_t]\leq C(1+ \vert \xi\vert^2),
\end{equation}
where the constant $C$ depends only on the Lipschitz constant.
\end{proposition}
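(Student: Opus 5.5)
The plan is to deduce both estimates from the $G$-It\^o formula combined with a Gronwall argument carried out under the conditional $G$-expectation $\bar{E}[\cdot|\Omega_t]$. Set $\hat X_s = X^{t,\xi}_s - X^{t,\xi'}_s$. Subtracting the two equations gives
\begin{equation*}
\hat X_s = (\xi-\xi') + \int_t^s \hat b_r\,dr + \int_t^s \hat h_{ij,r}\, d\langle B\rangle^{ij}_r + \int_t^s \hat\sigma_{j,r}\, dB^j_r ,
\end{equation*}
where $\hat b_r = b(X^{t,\xi}_r)-b(X^{t,\xi'}_r)$, and $\hat h_{ij,r}$, $\hat\sigma_{j,r}$ are defined in the same way. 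By the Lipschitz assumption, each of these is bounded in absolute value by $L|\hat X_r|$.

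First I apply the $G$-It\^o formula to $|\hat X_s|^2$. This produces three kinds of terms. The drift term is $2\langle \hat X_r,\hat b_r\rangle\,dr$. The quadratic-variation terms are $2\langle \hat X_r,\hat h_{ij,r}\rangle\, d\langle B\rangle^{ij}_r$ and $\langle \hat\sigma_{i,r},\hat\sigma_{j,r}\rangle\, d\langle B\rangle^{ij}_r$. The last term is a stochastic integral $2\langle \hat X_r,\hat\sigma_{j,r}\rangle\, dB^j_r$.

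Next I take $\bar{E}[\cdot|\Omega_t]$ and handle the terms one at a time. The $dB$-integral is a symmetric $G$-martingale, so its conditional expectation vanishes. For the $d\langle B\rangle$ terms, I use the fact that $d\langle B\rangle_r \le \bar\sigma^2 I\,dr$ in the matrix sense; more precisely, $|\int \eta_r\, d\langle B\rangle^{ij}_r| \le c\int|\eta_r|\,dr$ quasi-surely. This bounds those terms by $cL\int_t^s|\hat X_r|^2\,dr$. Together with the drift bound, and using sub-additivity and monotonicity of $\bar{E}[\cdot|\Omega_t]$, I get
\begin{equation*}
\bar{E}[|\hat X_s|^2|\Omega_t] \le |\xi-\xi'|^2 + C\int_t^s \bar{E}[|\hat X_r|^2|\Omega_t]\,dr .
\end{equation*}
Here I use that $\xi-\xi'$ is $\Omega_t$-measurable, so it comes out of the conditional expectation. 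I also use that $\bar{E}[\int \cdot\,dr|\Omega_t] \le \int \bar{E}[\cdot|\Omega_t]\,dr$, which follows from sub-additivity applied to Riemann sums, with a limit argument in $L^1_G$.

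Gronwall's inequality, applied pointwise in $\omega$ to the function $s\mapsto \bar{E}[|\hat X_s|^2|\Omega_t]$, then gives the first estimate with $C$ replaced by $e^{C(T-t)}$.

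The second estimate follows the same route applied to $|X^{t,\xi}_s|^2$. The difference is that the coefficients now have only linear growth: for example $|b(x)| \le |b(0)| + L|x|$. This produces terms of the form $2|x||b(x)| \le C(1+|x|^2)$, so Gronwall yields $C(1+|\xi|^2)$.

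I expect the main obstacle to be technical rather than conceptual. One needs the $G$-martingale term to have zero conditional expectation, which requires its integrand to lie in $M^2_G$. That in turn requires a priori square integrability of the solution; I take this from the existence theory for $G$-SDEs in \cite{pengbook}. One also needs the exchange of $\bar{E}[\cdot|\Omega_t]$ with the time integral to be legitimate. Finally, the Gronwall step uses the inequality only as an inequality between random variables holding quasi-surely for every $s$, which is fine because both sides are defined for each $s$. The constant then depends only on the Lipschitz constants, on $\bar\sigma^2$ and on $T$.
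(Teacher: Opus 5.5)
The paper gives no proof of this proposition and simply quotes \cite{pengbook}. Your architecture matches the one there: an integral inequality for $u(s)=\bar{E}[|\hat X_s|^2|\Omega_t]$, closed by Gronwall. The gap is in how you produce the integral inequality, at exactly the step you flagged. After the $G$-It\^o formula the $dB$-integrand is $2\langle \hat X_r,\hat\sigma_{j,r}\rangle$, which is \emph{quadratic} in $\hat X$. For its conditional expectation to vanish it must lie in $M^2_G$, i.e.\ essentially $\bar{E}[\int_t^s|\hat X_r|^4\,dr]<\infty$. Square integrability of the solution does not give this, and it is all the $M^2_G$ existence theory provides for $\xi,\xi'\in L^2_G(\Omega_t)$. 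For example, take $d=1$, $b=h=0$, $\sigma(x)=x$ and $\xi'=0$. Then $X_r=\xi\exp\bigl(B_r-B_t-\frac12(\langle B\rangle_r-\langle B\rangle_t)\bigr)$ and the integrand is $2X_r^2$. As soon as $\bar{E}[\xi^4]=\infty$ we get $\bar{E}[\int_t^sX_r^4\,dr]=\infty$, so $\int 2X_r^2\,dB_r$ is not even defined in the $M^2_G$ theory, let alone a symmetric $G$-martingale. Your second estimate has the same problem (integrand $2\langle X_r,\sigma_j(X_r)\rangle$). Also, the $G$-It\^o formula in \cite{pengbook} is proved for bounded integrands, so with unbounded Lipschitz coefficients you need the Li--Peng extension in any case.

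There are two ways to repair this.

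(i) Avoid It\^o's formula and square the integral equation:
\begin{equation*}
|\hat X_s|^2\le 4\Bigl(|\xi-\xi'|^2+\Bigl|\int_t^s\hat b_r\,dr\Bigr|^2+\Bigl|\int_t^s\hat h_{ij,r}\,d\langle B\rangle^{ij}_r\Bigr|^2+\Bigl|\int_t^s\hat\sigma_{j,r}\,dB^j_r\Bigr|^2\Bigr).
\end{equation*}
Apply Cauchy--Schwarz to the $dr$ and $d\langle B\rangle$ terms, using $|d\langle B\rangle^{ij}_r|\le c\,dr$ q.s. For the last term use the conditional isometry $\bar{E}[|\int_t^s\hat\sigma_{j,r}\,dB^j_r|^2\,|\,\Omega_t]\le C\,\bar{E}[\int_t^s|\hat\sigma_r|^2\,dr\,|\,\Omega_t]$. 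It holds for every $\hat\sigma\in M^2_G$ by density from simple integrands, since both sides are continuous from $M^2_G$ into $L^1_G$ and $\bar{E}[\cdot|\Omega_t]$ is $L^1_G$-continuous. This needs only $\hat X\in M^2_G$, and it is the same estimate that drives the Picard existence proof in \cite{pengbook}.

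(ii) If you want to keep It\^o, apply it to $\phi_N(|\hat X_s|^2)$ with $\phi_N(x)=Nx/(N+x)$.
\begin{itemize}
\item The $dB$-integrand $2\phi_N'(|\hat X_r|^2)\langle\hat X_r,\hat\sigma_{j,r}\rangle$ is bounded by $2LN$, so its conditional expectation vanishes.
\item The $\phi_N''$-part of the second-order term is $\le 0$, because $\phi_N''\le 0$ and $d\langle B\rangle$ is positive semidefinite.
\item Since $x\phi_N'(x)\le\phi_N(x)$, every remaining term is bounded by $C\phi_N(|\hat X_r|^2)\,dr$.
\end{itemize}
Gronwall then gives $\bar{E}[\phi_N(|\hat X_s|^2)|\Omega_t]\le e^{C(T-t)}|\xi-\xi'|^2$. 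Let $N\to\infty$ in $L^1_G$, which is allowed because $|\hat X_s|^2\in L^1_G$. The second estimate is handled the same way, using the linear-growth bounds you already wrote.
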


\begin{corollary}(\cite{pengbook})\label{coA.2}
    For any $\xi \in L_G^2(\Omega_t;\mathbb{R}^d)$, we have 
    \begin{equation}
        \bar{E}[\vert X^{t,\xi}_{t+\delta}-\xi\vert ^2 | \Omega_t] \leq C(1+\vert \xi \vert^2)\delta\ for \ \delta\in[0,T-t],
    \end{equation}
    where the constant $C$ depends only on the Lipschitz constants of $b$, $h$ and $\sigma$. 
\end{corollary}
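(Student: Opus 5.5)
The plan is to derive the corollary from the preceding Proposition via the Markov/flow property and standard $G$-Itô isometry estimates, applied to the increment rather than to two solutions. Write
\begin{equation*}
X^{t,\xi}_{t+\delta}-\xi=\int_t^{t+\delta} b(X^{t,\xi}_r)\,dr+\int_t^{t+\delta} h_{ij}(X^{t,\xi}_r)\,d\langle B\rangle^{ij}_r+\int_t^{t+\delta}\sigma_j(X^{t,\xi}_r)\,dB^j_r,
\end{equation*}
and bound the conditional second moment of each of the three terms separately, using $(a+b+c)^2\le 3(a^2+b^2+c^2)$ and the sub-additivity and monotonicity of $\bar{E}[\cdot\,|\,\Omega_t]$.

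For the drift term, I will use Cauchy--Schwarz, $\big|\int_t^{t+\delta} b(X_r)dr\big|^2\le \delta\int_t^{t+\delta}|b(X_r)|^2dr$, together with the linear growth $|b(x)|^2\le C(1+|x|^2)$ that follows from the Lipschitz property. Then
\begin{equation*}
\bar{E}\Big[\Big|\int_t^{t+\delta} b\,dr\Big|^2\Big|\Omega_t\Big]\le C\delta\int_t^{t+\delta}\bar{E}[1+|X_r|^2|\Omega_t]\,dr\le C\delta^2(1+|\xi|^2),
\end{equation*}
where the last step uses the second estimate of the Proposition. The interchange of $\bar E[\cdot|\Omega_t]$ and $\int dr$ is justified by sub-additivity, since the conditional expectation of an integral is at most the integral of the conditional expectations. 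The $d\langle B\rangle$ term is handled in the same way, because $d\langle B\rangle^{ij}_r$ is bounded by $c\,dr$ for a constant $c$ depending on $\bar\sigma^2$, i.e.\ on $G$.

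For the $dB$ term, I will invoke the conditional $G$-Itô isometry inequality, $\bar{E}[|\int_t^{t+\delta}\eta_r dB_r|^2|\Omega_t]\le \bar\sigma^2\bar{E}[\int_t^{t+\delta}|\eta_r|^2dr|\Omega_t]$, and then bound as before to get $C\delta(1+|\xi|^2)$. Since $\delta\le T-t$, the $\delta^2$ terms are absorbed into $C\delta$ at the cost of a factor depending on $T$. This $T$-dependence is present implicitly in the constant of the Proposition as well.

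The only delicate point is the conditional form of the estimates when $\xi$ is random in $L^2_G(\Omega_t)$. I would first prove everything for deterministic $\xi=x$, where conditioning reduces to ordinary $G$-expectation by independence of increments. Then I would pass to $\xi$ that is simple, or of the form $\sum x_k\mathbf 1_{A_k}$, by the standard "freeze the initial value" argument, $\bar E[\psi(X^{t,\xi}_{t+\delta},\xi)|\Omega_t]=\bar E[\psi(X^{t,x}_{t+\delta},x)]|_{x=\xi}$. Finally I would extend to general $\xi$ by density in $L^2_G$, using the first estimate of the Proposition for continuity in $\xi$. I expect this extension step to be the main, though routine, obstacle.
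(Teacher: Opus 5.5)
Your proposal is correct and is essentially the standard proof in Peng's book, which the paper cites without reproducing. That proof bounds $\bar{E}[|X^{t,\xi}_{t+\delta}-\xi|^2|\Omega_t]$ by $C\int_t^{t+\delta}(1+\bar{E}[|X^{t,\xi}_s|^2|\Omega_t])\,ds$, using Cauchy--Schwarz for the $ds$ and $d\langle B\rangle$ terms and the conditional $G$-It\^{o} isometry for the $dB$ term, and then applies the second estimate of the preceding Proposition. The constant therefore also depends on $T$ and $G$, as you note, and on $|b(0)|,|h(0)|,|\sigma(0)|$ through linear growth.

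Your last paragraph is unnecessary. Every ingredient above already holds conditionally for general $\xi\in L^2_G(\Omega_t)$, so your first three paragraphs prove the statement as it stands. The detour is also better dropped: random variables of the form $\sum_k x_k\mathbf{1}_{A_k}$ are generally not quasi-continuous, hence not in $L^2_G(\Omega_t)$, so "density in $L^2_G$" from such simple $\xi$ would itself need extra justification.
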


 When $\xi = x \in \mathbb{R}^d$, define
 \begin{equation}
     u(t,x)\coloneqq Y^{t,x}_t, \ (t,x)\in[0,T]\times\mathbb{R}^d.
 \end{equation}
 For any $A\in \mathbb{S}(d)$, $p \in \mathbb{R}^d$, $r \in \mathbb{R}$, denote
\begin{equation*}
    F(A,p,r,x)\coloneqq G(B(A,p,r,x))+\left\langle p,b(x)\right\rangle+f(x,r),
\end{equation*}
where for any $Q \in \mathbb{S}(d) $, $G(Q)=\frac{1}{2}\bar{E}[\left\langle QB_1,B_1\right\rangle]$, and $B(A,p,r,x)$ is a $d\times d$ symmetric matrix with
\begin{equation*}
    B_{ij}(A,p,r,x)\coloneqq \left\langle A\sigma_i(x), \sigma_j(x)\right\rangle+ \left\langle p,h_{ij}(x)+h_{ji}(x)\right\rangle+g_{ij}(x,r)+g_{ji}(x,r).
\end{equation*}
\begin{theorem}(\cite{pengbook})
  The function $u(t,x)$ is the unique viscosity solution of the following PDE,
  \begin{equation}
          \partial_tu+F(D^2u,Du,u,x)=0,\ \ 
          u(T,x)=\Phi(x).\\
  \end{equation}
  where $D\coloneqq (\partial_{x_i})^d_{i=1}$, $D^2 \coloneqq (\partial_{x_ix_j}^2)^d_{i,j=1}$.
\end{theorem}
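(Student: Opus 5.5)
The plan is to follow Peng's nonlinear Feynman--Kac argument (Chapter 5 of \cite{pengbook}) for the decoupled $G$-FBSDE: show that $u(t,x)=Y^{t,x}_t$ is a continuous function satisfying a dynamic programming principle, then check the viscosity sub- and supersolution properties at a point, and finally invoke the comparison principle for the fully nonlinear parabolic equation. Uniqueness follows from comparison, since $F$ is degenerate elliptic (monotone in $A$ because $G$ is monotone), Lipschitz in $(p,r)$, and $\Phi$ is Lipschitz. So the work is existence, i.e. that $u$ is a viscosity solution.

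\emph{Regularity of $u$.} I would first show that $u$ is deterministic: $Y^{t,x}_t$ is $\Omega_t$-measurable, but the data depend only on increments after $t$. Next I would prove $|u(t,x)-u(t,x')|\le C|x-x'|$ and $|u(t,x)|\le C(1+|x|)$. For this I would estimate $\bar{E}[|Y^{t,x}_s-Y^{t,x'}_s|^2]$ using the Lipschitz property of $\Phi,f,g_{ij}$, the first estimate of the Proposition above, and a Gronwall argument on the conditional $G$-expectation. Continuity in $t$ (H\"older-$\tfrac12$) comes from Corollary \ref{coA.2} together with the flow property $X^{t,x}_s=X^{t+\delta,X^{t,x}_{t+\delta}}_s$. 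The flow property in turn gives the key identity $Y^{t,x}_{t+\delta}=u(t+\delta,X^{t,x}_{t+\delta})$. Proving this requires first establishing it for simple $\xi=\sum x_i\mathbb{I}_{A_i}$ and then extending by the Lipschitz estimates in $\xi$.

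\emph{Viscosity property.} Fix $(t,x)$ and a test function $\psi\in C^{2,3}_b$ with $\psi\ge u$ and $\psi(t,x)=u(t,x)$. By the identity above and monotonicity of the $G$-BSDE in its terminal value,
\[
0\le \bar{E}\Big[\psi(t+\delta,X^{t,x}_{t+\delta})-\psi(t,x)+\int_t^{t+\delta}\big(f\,dr+g_{ij}\,d\langle B^i,B^j\rangle_r\big)\Big]+o(\delta).
\]
I would then apply $G$-It\^o's formula to $\psi(s,X^{t,x}_s)$. The stochastic integral against $dB$ has zero $G$-expectation. Freezing the coefficients at $x$ costs $O(\delta^{3/2})$ by Corollary \ref{coA.2}. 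What remains is $\bar{E}[\partial_t\psi\,\delta+\langle D\psi,b\rangle\delta+\tfrac12 B_{ij}(D^2\psi,D\psi,\psi,x)(\langle B^i,B^j\rangle_{t+\delta}-\langle B^i,B^j\rangle_t)]$. The $G$-expectation of the last term equals $\delta\,G(B(\cdot))$, because $\bar{E}[\tfrac12\langle Q,\langle B\rangle_\delta\rangle]=\delta G(Q)$. Dividing by $\delta$ and letting $\delta\to0$ gives $\partial_t\psi+F(D^2\psi,D\psi,\psi,x)\ge0$, which is the subsolution property. The supersolution property is symmetric, and the terminal condition is immediate.

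\emph{Main obstacle.} The hardest step is making the error terms rigorously $o(\delta)$ under the sublinear expectation. Sub-additivity only lets us bound $|\bar{E}[X+Y]-\bar{E}[X]|\le\bar{E}[|Y|]$, so each frozen-coefficient remainder must be controlled in $L^1_G$ uniformly. The same care is needed for the BSDE term: replacing $Y_r$ by $\psi$ inside $f,g$ uses the $L^2_G$ estimate $\bar{E}[|Y^{t,x}_r-u(t,x)|^2]\le C\delta$. I would handle this through the a priori $G$-BSDE estimates in \cite{pengbook}, combined with the moment bounds above.
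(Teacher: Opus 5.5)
Your proposal is correct and follows essentially the same route as the nonlinear Feynman--Kac proof in \cite{pengbook}, which the paper cites rather than proves and whose pattern it repeats in Section 5 for $\nu$. That route consists of a deterministic, Lipschitz-in-$x$ and $\tfrac12$-H\"older-in-$t$ function $u$; the flow identity $Y^{t,x}_{t+\delta}=u(t+\delta,X^{t,x}_{t+\delta})$; a test-function argument with frozen coefficients and $\bar{E}[\tfrac12\langle Q,\langle B\rangle_{\delta}\rangle]=\delta G(Q)$; and uniqueness from the comparison principle. The only slip is that the deterministic term $f(x,\psi(t,x))\,\delta$ is missing from your ``what remains'' expression, although it is correctly included through $F$ in your final inequality.
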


Now we begin the proof of Lemma 4.5. Here we only consider the $G\text{-}$SDE such that $h_{ij}=0$ for $i,j = 1,..d$ and $\sigma=\sigma_{i,j}^d$ is a unit matrix.
 \begin{equation}
        d X^{t,\xi}_s = b(X^{t,\xi}_s)ds+dB_s,\ \
        X^{t,\xi}_t=\xi,\\
\end{equation}
where $\xi \in L^2(\Omega_t; \mathbb{R}^d)$ and $b: \mathbb{R}^d \mapsto \mathbb{R}^d$ are given Lipschitz functions. Let $\Theta$ be a given bounded and closed subset in $\mathbb{R}^{d \times d}$, $\mathcal{A}^{\Theta}_{t,T}$ to be the collection of all $\Theta$-valued, $\mathbb{F}$-adapted processes on an interval $[t,T]$, $0 \leq t \leq T < \infty$ and for $\theta \in \mathcal{A}^{\Theta}_{t,T}$, consider
\begin{equation}
        d X^{t,\xi,\theta}_s = b(X^{t,\xi, \theta}_s)ds+\theta_sdW_s,\ \
       X^{t,\xi, \theta}_t=\xi,
\end{equation}
where $(W_s)_{s\geq 0}$ is a d-dimentional standard Brownian motion. We set $\mathcal{F}_s$ as the natural filtration generated by $W$ and $\mathbb{F}=\{\mathcal{F}_s\}_{s>0}$.

Given $\varphi \in C_{b,lip}(\mathbb{R}^d)$, 
let
\begin{equation}
    \Lambda_{t,T}[\xi]=\underset{\theta \in \mathcal{A}^{\Theta}_{t,T}}{\rm esssup}E_P[\varphi(X^{t,\xi, \theta}_T)|\mathcal{F}_t].
\end{equation}

\begin{lemma}\label{LeA.4}
    For each $\theta^1$ and $\theta^2$ in $\mathcal{A}^{\Theta}_{t,T}$, there exists $\theta \in \mathcal{A}^{\Theta}_{t,T}$ such that
\begin{equation}
      E_P[\varphi(X^{t,\xi, \theta}_T)|\mathcal{F}_t] =E_P[\varphi(X^{t,\xi, \theta^1}_T)|\mathcal{F}_t]\vee E_P[\varphi(X^{t,\xi, \theta^2}_T)|\mathcal{F}_t]
    \end{equation}
Consequently, there exists a sequence $\{ \theta^i\}^\infty_{i=1} $ of $\mathcal{A}^{\Theta}_{t,T}$ such that
\begin{equation}
    E_P[\varphi(X^{t,\xi, \theta^i}_T)|\mathcal{F}_t]\nearrow  \Lambda_{t,T}[\xi],\ P\text{-}a.s.
\end{equation}
We also have, for each $r \leq t$,
\begin{equation}
  E_P[\underset{\theta \in \mathcal{A}^{\Theta}_{t,T}}{\rm esssup}E_P[\varphi(X^{t,\xi, \theta}_T)|\mathcal{F}_t]|\mathcal{F}_r]=\underset{\theta \in \mathcal{A}^{\Theta}_{t,T}}{\rm esssup}E_P[\varphi(X^{t,\xi, \theta}_T)|\mathcal{F}_r]  
\end{equation}
\end{lemma}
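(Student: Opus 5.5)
The natural approach is the classical pasting argument for control processes, as in Denis et al. \cite{DenishU} and in standard stochastic control. Given $\theta^1,\theta^2\in\mathcal{A}^{\Theta}_{t,T}$, I will set
\begin{equation*}
A\coloneqq\{E_P[\varphi(X^{t,\xi,\theta^1}_T)|\mathcal{F}_t]\geq E_P[\varphi(X^{t,\xi,\theta^2}_T)|\mathcal{F}_t]\}\in\mathcal{F}_t
\end{equation*}
and define $\theta_s\coloneqq\theta^1_s\mathds{1}_A+\theta^2_s\mathds{1}_{A^c}$ for $s\in[t,T]$. This process is $\mathbb{F}$-adapted because $A\in\mathcal{F}_t\subset\mathcal{F}_s$, and it is $\Theta$-valued, so $\theta\in\mathcal{A}^{\Theta}_{t,T}$.

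The first key step is to show that $X^{t,\xi,\theta}_s=\mathds{1}_AX^{t,\xi,\theta^1}_s+\mathds{1}_{A^c}X^{t,\xi,\theta^2}_s$ a.s. for all $s\in[t,T]$. Since $\mathds{1}_A$ is $\mathcal{F}_t$-measurable, it can be moved inside both the Lebesgue integral and the stochastic integral $\int_t^s\theta_rdW_r$. Also $b(\mathds{1}_Ay^1+\mathds{1}_{A^c}y^2)=\mathds{1}_Ab(y^1)+\mathds{1}_{A^c}b(y^2)$, and $\xi=\mathds{1}_A\xi+\mathds{1}_{A^c}\xi$. Hence the right-hand side solves the SDE driven by $\theta$, and pathwise uniqueness (from the Lipschitz continuity of $b$) identifies it with $X^{t,\xi,\theta}$. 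It then follows that
\begin{equation*}
E_P[\varphi(X^{t,\xi,\theta}_T)|\mathcal{F}_t]=\mathds{1}_AE_P[\varphi(X^{t,\xi,\theta^1}_T)|\mathcal{F}_t]+\mathds{1}_{A^c}E_P[\varphi(X^{t,\xi,\theta^2}_T)|\mathcal{F}_t],
\end{equation*}
and by the choice of $A$ this is exactly the maximum of the two conditional expectations.

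The second step uses this lattice (upward directed) property of the family $\{E_P[\varphi(X^{t,\xi,\theta}_T)|\mathcal{F}_t]:\theta\in\mathcal{A}^{\Theta}_{t,T}\}$. By the standard property of the essential supremum, there is a countable subfamily $\tilde\theta^i$ whose pointwise supremum equals $\Lambda_{t,T}[\xi]$. Replacing it by the successive maxima, which are realized by controls $\theta^i$ through the first step, gives a nondecreasing sequence $E_P[\varphi(X^{t,\xi,\theta^i}_T)|\mathcal{F}_t]\nearrow\Lambda_{t,T}[\xi]$ a.s.

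The last step is the tower identity. For the inequality ``$\geq$'', each $\theta$ gives
\begin{equation*}
E_P[\Lambda_{t,T}[\xi]|\mathcal{F}_r]\geq E_P[E_P[\varphi(X^{t,\xi,\theta}_T)|\mathcal{F}_t]|\mathcal{F}_r]=E_P[\varphi(X^{t,\xi,\theta}_T)|\mathcal{F}_r],
\end{equation*}
and taking the esssup over $\theta$ yields the claim. For ``$\leq$'', I will use the monotone sequence from the second step. Since $\varphi$ is bounded, conditional monotone convergence gives
\begin{equation*}
E_P[\Lambda_{t,T}[\xi]|\mathcal{F}_r]=\lim_iE_P[\varphi(X^{t,\xi,\theta^i}_T)|\mathcal{F}_r]\leq\underset{\theta}{\rm esssup}\,E_P[\varphi(X^{t,\xi,\theta}_T)|\mathcal{F}_r].
\end{equation*}

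I expect the main obstacle to be the first step: the identification of the pasted solution. This requires care that $\xi\in L^2(\Omega_t)$ is $\mathcal{F}_t$-measurable, that stochastic integrals commute with multiplication by $\mathcal{F}_t$-measurable indicators, and that pathwise uniqueness holds. The other two steps are routine properties of essential suprema.
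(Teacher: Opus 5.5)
Your proposal is correct and follows essentially the route the paper indicates; the paper omits the proof and cites Lemma 40 of Denis et al. together with Yan's commutation theorem. That route is the pasting $\theta=\theta^1\mathds{1}_A+\theta^2\mathds{1}_{A^c}$ with $A\in\mathcal{F}_t$ chosen by comparing the two conditional expectations, followed by the upward-directed property of the essential supremum. Your monotone-convergence argument for the tower identity is a direct proof of Yan's commutation theorem in this setting, and your pathwise-uniqueness check that the pasted control gives the pasted solution is exactly the extra step needed here because of the drift $b$.
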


We refer to a similar proof in \cite{DenishU} (Lemma 40) and Yan's commutation theorem \cite{Yanjiaan}, so we omit it.
  
The next lemma gives some properties about the function $\Lambda_{t,T}[\cdot]$. A similar proof can be found in \cite{DenishU}.
\begin{lemma}\label{liplemma}
    The function $\Lambda_{t,T}[\cdot]: L^2(\Omega,\mathcal{F}_t,P;\mathbb{R}^d) \to L^2(\Omega,\mathcal{F}_t,P;\mathbb{R})$ has the following properties, for each $\xi$, $\xi^{\prime}\in L^2(\mathcal{F}_t)$,\\
\rm{(1)} $\Lambda_{t,T}[\xi]\leq C_{\varphi}$;\\
\rm{(2)} $\vert \Lambda_{t,T}[\xi]-\Lambda_{t,T}[\xi^{\prime}]\vert\leq K_{\varphi}\vert \xi-\xi^{\prime} \vert$,\\
where $C_{\varphi} = \underset{x}{{\rm sup}} \ \varphi(x)$ and $K_\varphi$ is the Lipschitz constant of $\varphi$. 
\end{lemma}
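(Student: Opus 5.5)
The plan is to follow the argument of Denis et al.\ \cite{DenishU} for the corresponding property of $\Lambda_{t,T}$. Both parts come from first working with a fixed control $\theta \in \mathcal{A}^{\Theta}_{t,T}$ and then passing to the essential supremum using the upward-directed structure in Lemma \ref{LeA.4}.

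\emph{Part (1).} For every $\theta$ we have $\varphi(X^{t,\xi,\theta}_T) \leq \sup_x \varphi(x) = C_\varphi$ pointwise. By monotonicity of conditional expectation, $E_P[\varphi(X^{t,\xi,\theta}_T)|\mathcal{F}_t] \leq C_\varphi$ $P$-a.s. The essential supremum of a family of random variables that are all a.s.\ bounded by the constant $C_\varphi$ is again bounded by $C_\varphi$, which gives (1). Since $\varphi$ is bounded, the same argument with $-\varphi$ shows $\Lambda_{t,T}[\xi]\in L^2(\mathcal{F}_t)$, so the map is well defined.

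\emph{Part (2).} By Lemma \ref{LeA.4} there is a sequence $\theta^i$ with $E_P[\varphi(X^{t,\xi,\theta^i}_T)|\mathcal{F}_t]\nearrow\Lambda_{t,T}[\xi]$ a.s. For each $i$,
\begin{equation*}
E_P[\varphi(X^{t,\xi,\theta^i}_T)|\mathcal{F}_t]\leq E_P[\varphi(X^{t,\xi',\theta^i}_T)|\mathcal{F}_t]+E_P[\vert\varphi(X^{t,\xi,\theta^i}_T)-\varphi(X^{t,\xi',\theta^i}_T)\vert\,|\mathcal{F}_t].
\end{equation*}
The first term on the right is at most $\Lambda_{t,T}[\xi']$. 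The second term is at most $K_\varphi E_P[\vert X^{t,\xi,\theta^i}_T-X^{t,\xi',\theta^i}_T\vert\,|\mathcal{F}_t]$. Letting $i\to\infty$ and then exchanging the roles of $\xi$ and $\xi'$ reduces (2) to a pathwise estimate, uniform in $\theta$, on the difference of two solutions driven by the same control and the same Brownian path.

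The key point is that the noise is additive. With the same $\theta$, the stochastic integrals $\int_t^s\theta_r\,dW_r$ cancel exactly, so $D_s := X^{t,\xi,\theta}_s-X^{t,\xi',\theta}_s$ solves the random ODE $\dot D_s=b(X^{t,\xi,\theta}_s)-b(X^{t,\xi',\theta}_s)$ with $D_t=\xi-\xi'$, which is $\mathcal{F}_t$-measurable. Then $\vert D_T\vert$ is controlled deterministically by $\vert\xi-\xi'\vert$, and it comes out of the conditional expectation. In the driftless case of \cite{DenishU} one has $D_T=\xi-\xi'$ identically, and (2) follows with exactly $K_\varphi$.

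\textbf{Main obstacle.} The hard step is controlling $\vert D_T\vert$ with a drift present so as to keep the constant $K_\varphi$. Under the standing hypotheses of Lemma \ref{mianlemma} ($b$ Lipschitz with constant $l_b$), Gronwall's inequality only gives
\begin{equation*}
\vert D_T\vert\leq e^{l_b(T-t)}\vert\xi-\xi'\vert.
\end{equation*}
This yields the constant $K_\varphi e^{l_b(T-t)}$, which is bounded by $K_\varphi e^{l_bT}$ uniformly in $t\in[0,T]$ and independent of $\theta$.

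To obtain exactly $K_\varphi$ I would first try a one-sided Lipschitz (monotonicity) argument: differentiate $\vert D_s\vert^2$ to get $\frac{d}{ds}\vert D_s\vert^2=2\langle D_s,b(X^{t,\xi,\theta}_s)-b(X^{t,\xi',\theta}_s)\rangle$. If $\langle x-y,b(x)-b(y)\rangle\leq0$, then $\vert D_T\vert\leq\vert\xi-\xi'\vert$ and the stated constant follows. This is in the spirit of the dissipativity used later in \textbf{(H3)}, but it is not implied by Lipschitz continuity of $b$ alone.

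Without such monotonicity, exactly $K_\varphi$ cannot hold for an arbitrary Lipschitz drift. For example, take $\Theta=\{0\}$: then $\Lambda_{t,T}[x]=\varphi(Q_{T,t}(x))$, where $Q$ is the flow of $b$, and the Lipschitz constant of $\varphi\circ Q$ can exceed $K_\varphi$ when $b$ is expanding. So under the stated assumptions, what this argument proves is the version of (2) with the constant $K_\varphi e^{l_b(T-t)}$. That is sufficient for the subsequent use in the proof of Lemma \ref{mianlemma}, which only needs some Lipschitz bound that is uniform in $\theta$.
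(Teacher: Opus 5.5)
Your argument is correct, and it is the Denis--Hu--Peng argument that the paper cites in place of a proof. You fix a control, use that the additive noise $\int_t^s\theta_r\,dW_r$ cancels in $X^{t,\xi,\theta}-X^{t,\xi',\theta}$, bound the difference pathwise, pull the $\mathcal{F}_t$-measurable factor $\vert\xi-\xi'\vert$ out of the conditional expectation, and pass to the essential supremum. You do not even need the maximizing sequence of Lemma \ref{LeA.4}, since
\[
E_P[\varphi(X^{t,\xi,\theta}_T)\mid\mathcal{F}_t]\leq\Lambda_{t,T}[\xi']+K_\varphi e^{l_b(T-t)}\vert\xi-\xi'\vert
\]
already holds a.s.\ for every $\theta$.

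Your objection to the constant is also right. The cited result is for the driftless case, where $X^{t,\xi,\theta}_T-X^{t,\xi',\theta}_T=\xi-\xi'$ and one gets exactly $K_\varphi$. With a Lipschitz drift, (2) holds in general only with $K_\varphi e^{l_b(T-t)}$. Your $\Theta=\{0\}$ example shows this, and so does a nondegenerate one: take $d=1$, $\Theta=\{1\}$, $b(x)=x$ and $\varphi$ a clipped identity. Then $\Lambda_{t,T}[x]=E_P[\varphi(e^{T-t}x+Z)]$ with $Z$ Gaussian, whose slope at $0$ tends to $e^{T-t}>1$ as the clipping level grows.

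As you note, nothing downstream depends on the exact value. The approximation step in Lemma \ref{LeA.7}, the Lipschitz property of $\mu_{t,T}$, and the estimate $\vert\nu(t,x)-\nu(t+h,x)\vert\leq K_\varphi\sup_\theta E_P[\vert X^{t,x,\theta}_{t+h}-x\vert]$ all go through with $K_\varphi$ replaced by $K_\varphi e^{l_bT}$.
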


\begin{lemma}
    For each $x \in \mathbb{R}^d$, $\Lambda_{t,T}[x]$ is a deterministic function.
\end{lemma}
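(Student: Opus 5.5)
The plan is to follow the argument of Denis et al. (their Lemma 41), based on the time-shift invariance of Brownian increments. Fix $x\in\mathbb{R}^d$. For $\theta\in\mathcal{A}^{\Theta}_{t,T}$, the process $X^{t,x,\theta}_s$, $s\in[t,T]$, is driven only by $\theta$ and the increments $W_{t+u}-W_t$. The one exception is that $\theta$ itself may depend on the path of $W$ up to time $t$. So I will first write $\theta_s(\omega)=\Gamma(s,W_{\cdot\wedge t}(\omega),(W_{t+u}-W_t)_{u\le s-t})$ for a jointly measurable, progressively measurable functional $\Gamma$. This is possible by a Doob-type functional representation of $\mathbb{F}$-adapted processes.

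Conditioning on $\mathcal{F}_t$, the past $W_{\cdot\wedge t}$ is then frozen at some path $w$, and the increments form a Brownian motion independent of $\mathcal{F}_t$. Hence
\[
E_P[\varphi(X^{t,x,\theta}_T)\mid\mathcal{F}_t](\omega)=E_P[\varphi(X^{t,x,\theta^{w}}_T)]\big|_{w=W_{\cdot\wedge t}(\omega)},
\]
where $\theta^{w}_s=\Gamma(s,w,(W_{t+u}-W_t)_{u\le s-t})$. For each fixed $w$, the control $\theta^w$ is adapted to $\mathbb{F}^t$, the filtration of the increments after $t$.

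Define the deterministic number $\lambda\coloneqq\sup_{\theta\in\mathcal{A}^{\Theta,t}_{t,T}}E_P[\varphi(X^{t,x,\theta}_T)]$, where the supremum runs over $\Theta$-valued $\mathbb{F}^t$-adapted controls. Such controls belong to $\mathcal{A}^{\Theta}_{t,T}$ and are independent of $\mathcal{F}_t$, so $E_P[\varphi(X^{t,x,\theta}_T)\mid\mathcal{F}_t]=E_P[\varphi(X^{t,x,\theta}_T)]$ a.s. This gives $\Lambda_{t,T}[x]\ge\lambda$. Conversely, the representation above shows $E_P[\varphi(X^{t,x,\theta}_T)\mid\mathcal{F}_t]\le\lambda$ a.s. for every $\theta\in\mathcal{A}^{\Theta}_{t,T}$, because each $\theta^{w}$ is an admissible $\mathbb{F}^t$-control. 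Taking the essential supremum, and using the monotone approximating sequence from Lemma \ref{LeA.4}, yields $\Lambda_{t,T}[x]\le\lambda$. Therefore $\Lambda_{t,T}[x]=\lambda$ a.s., which is deterministic.

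The main obstacle is making the freezing step rigorous. Two things must be checked. First, $\theta^{w}$ must be measurable in $w$ and progressively measurable with respect to $\mathbb{F}^t$ for a.e. $w$. Second, the SDE solution $X^{t,x,\theta}$ must coincide a.s. with the solution driven by $\theta^{W_{\cdot\wedge t}}$, so that the regular conditional distribution can be substituted pathwise. Lipschitz continuity of $b$ gives pathwise uniqueness, and hence a measurable solution map (Yamada–Watanabe). That map can be composed with $\Gamma$, and the independence lemma for conditional expectations then gives the displayed identity. If this is too delicate, an alternative is to approximate $\theta$ by simple controls, piecewise constant in time with values that are step functions of finitely many Brownian increments. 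For these the freezing is elementary. The approximation is then passed to the limit using the $L^2$-stability of $X^{t,x,\theta}$ in $\theta$ and the Lipschitz property of $\varphi$ from Lemma \ref{liplemma}.
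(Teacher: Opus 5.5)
Your argument is correct, but it reaches the conclusion by a different route from the paper's.

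\textbf{The paper's route.} It never disintegrates a general control. It works with the class $A_\theta$ of controls $\sum_{j=1}^N\mathds{1}_{A^j}\theta^j$, where $\{A^j\}$ is a finite $\mathcal{F}_t$-partition and each $\theta^j$ is $\mathbb{F}^t$-adapted. It asserts that this class is dense in $\mathcal{A}^{\Theta}_{t,T}$ and uses Lemma \ref{LeA.4} to pick $\theta_n\in A_\theta$ with $E_P[\varphi(X^{t,x,\theta_n}_T)\mid\mathcal{F}_t]\nearrow\Lambda_{t,T}[x]$. For such finite mixtures the freezing is elementary:
\begin{itemize}
\item SDE uniqueness gives $X^{t,x,\theta_n}=\sum_j\mathds{1}_{A^j_n}X^{t,x,\theta^j_n}$;
\item independence turns the conditional expectation into $\sum_j\mathds{1}_{A^j_n}E_P[\varphi(X^{t,x,\theta^j_n}_T)]$;
\item bounding by the best component $j_n$ squeezes $\Lambda_{t,T}[x]$ between deterministic numbers.
\end{itemize}

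\textbf{What your route buys.} You instead disintegrate an arbitrary control along the frozen past path $w=W_{\cdot\wedge t}$. This gives the bound $E_P[\varphi(X^{t,x,\theta}_T)\mid\mathcal{F}_t]\le\lambda$ for every $\theta$ at once. So you need neither the density of $A_\theta$ nor the stability step that the paper uses tacitly (Lipschitz $\varphi$ plus $L^2$-continuity of $\theta\mapsto X^{t,x,\theta}$) to turn density of controls into convergence of conditional expectations. You also name the constant explicitly as the supremum over $\mathbb{F}^t$-adapted controls.

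\textbf{What it costs.} You need a Fubini-type lemma for stochastic integrals: for a.e.\ $w$, the $w$-section of $\int_t^{\cdot}\theta_r\,dW_r$ agrees with $\int_t^{\cdot}\theta^w_r\,dW_r$. This, rather than Yamada--Watanabe, is the real technical point. With additive noise, $X^{t,x,\theta}$ is a continuous ODE functional of $\int_t^{\cdot}\theta_r\,dW_r$, so the drift causes no trouble once the integral is handled. The lemma follows by approximating $\theta$ with simple integrands, for which it is trivial, and passing to the limit along a subsequence via the It\^{o} isometry and Fubini.

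Your fallback via simple controls is essentially the paper's strategy. It discretises more than necessary, though: the paper only discretises the dependence on $\mathcal{F}_t$ and keeps the post-$t$ part general.

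Finally, you do not need Lemma \ref{LeA.4} for $\Lambda_{t,T}[x]\le\lambda$. The essential supremum is the smallest a.s.\ upper bound of the family, so an a.s.\ bound on each member suffices.
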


\begin{proof}
We set for any fixed $t\geq0$, $\mathcal{F}^t_r\coloneqq \sigma\{W_{t+u}-W_t,\ 0\leq u\leq r\}\vee \mathcal{N}$, $\mathbb{F}^t\coloneqq \sigma(\cup_{r=0}^{\infty}\{\mathcal{F}_r^t\})$.
where $\mathcal{N}$ is the collection of $P$-null subsets. We can see that $\mathbb{ F}^t$ is independent of $\mathcal{F}_t$. Let
\begin{equation*}
\begin{split}
    A_{\theta}\coloneqq\{& \theta_s= \Sigma_{j=1}^N \mathds{1}_{A^j}\theta_s^j:\{A^j\}_{j=1}^N \ is\  an\  \mathcal{F}_t\text{-}partition \ of\  \Omega,\\
    &\theta^j \in \mathcal{A}^{\Theta}_{t,T}\ is \ 
(\mathbb{F}^t)\ \text{-} adapted \}.\\
\end{split}
\end{equation*}
Then the collection of processes $(\theta_s)_{s\in[t,T]}$, $A_{\theta}$ is dense in $\mathcal{A}_{t,T}^{\Theta}$. According to Lemma \ref{LeA.4}, we can take $\theta_{n,s} = \Sigma^{N_n}_{j=1}\mathds{1}_{A_{n}^j}\theta_{n,s}^{j}$ such that 
$E_P[\varphi(X_T^{t,x,\theta_n})|\mathcal{F}_t] \nearrow \Lambda_{t,T}[x]$. 

 Consider $X_T^{t,x,\theta_n^j}=x +\int_t^T b(X_r^{t,x_j,\theta^{j}_n})dr+\int_t^T\theta^{j}_{n,r}dW_r$
and we need to prove that
\begin{equation}\label{In}
  \Sigma^{N_n}_{j=1}\mathds{1}_{A^{j}_n}X_T^{t,x,\theta_n^j}=X_T^{t,x,\theta_n}.
\end{equation}
For this, we see that, 
\begin{equation*}
   \begin{split}
       & \Sigma^{N_n}_{j=1}\mathds{1}_{A^{j}_n}X_T^{t,x,\theta_n^j} = \Sigma^{N_n}_{j=1}\mathds{1}_{A^{j}_n}(x +\int_t^T b(X_r^{t,x_j,\theta^{j}_n})dr+\int_t^T\theta^{j}_{n,r}dW_r)\\
       =& x + \int_t^T b(\Sigma^{N_n}_{j=1}\mathds{1}_{A^{j}_n}X_r^{t,x,\theta_n})dr+\int_t^T \theta_{n,r}dW_r.
   \end{split}
\end{equation*}
Recall $X_T^{t,x,\theta_n}=x+\int_t^Tb(X_r^{t,x,\theta_n})dr+\int_t^T\theta_{n,r}dW_r$. By the existence and uniqueness of the solution of SDE, we can prove (\ref{In}).

It then follows that

\begin{equation*}
    \begin{split}
        &E_P[\varphi(X_T^{t,x,\theta_n})|\mathcal{F}_t]
        =E_P[\varphi(x+\int_t^Tb(X_r^{t,x,\theta_n})dr+\int_t^T\theta_{n,r}dW_r)|\mathcal{F}_t]\\
        =&\Sigma^{N_n}_{j=1}\mathds{1}_{A^{j}_n}E_P[\varphi(x +\int_t^T b(X_r^{t,x,\theta^{j}_n})dr+\int_t^T\theta^{j}_{n,r}dW_r)|\mathcal{F}_t]\\
        =&\Sigma^{N_n}_{j=1}\mathds{1}_{A^{j}_n}E_P[\varphi(x +\int_t^T b(X_r^{t,x,\theta^{j}_n})dr+\int_t^T\theta^{j}_{n,r}dW_r)]\\
        \leq &\underset{1\leq j \leq N_i}{\rm max}E_P[\varphi(x +\int_t^T b(X_r^{t,x,\theta^{j}_n})dr+\int_t^T\theta^{j}_{n,r}dW_r)].
    \end{split}
\end{equation*}

We set $j_n$ as the maximizer of $\{ E_P[\varphi(X_t^{t,x,\theta^{j}_n})]\}_{j=1}^{N_n}$, i.e. we have 
\begin{equation*}
  E_P[\varphi(X_T^{t,x,\theta_n})|\mathcal{F}_t]\leq  E_P[\varphi(X_t^{t,x,\theta^{j_n}_n})] .
\end{equation*}
Taking $n \to \infty$ on both sides, we have $ \Lambda_{t,T}[x] \leq  \underset{n\to\infty}{lim}E_P[\varphi(X_t^{t,x,\theta^{j_n}_n})]$.
From the definition of $ \Lambda_{t,T}[x] $, we have $\Lambda_{t,T}[x] \geq  \underset{n\to\infty}{lim}E_P[\varphi(X_t^{t,x,\theta^{j_n}_n})]$.

So, $ \underset{n\to\infty}{lim}E_P[\varphi(X_t^{t,x,\theta^{j_n}_n})]=\Lambda_{t,T}[x],\ a.s.$
and thus $\Lambda_{t,T}[x]$ is deterministic. 
\end{proof}

Denote $\mu_{t,T}(x)\coloneqq \Lambda_{t,T}[x]$, $t \leq T$. Then $\mu_{t,T}(\cdot)$ is a bounded Lipschitz function.

\begin{lemma}\label{LeA.7}
    For each $\xi \in L^2_G(\Omega,\mathcal{F}_t,P;\mathbb{R}^d)$, we have $ \mu_{t,T}(\xi)=\Lambda_{t,T}[\xi],\ a.s.$.
\end{lemma}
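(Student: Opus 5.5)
The plan follows the standard Denis--Hu--Peng argument: first treat simple (finitely-valued) $\mathcal{F}_t$-measurable $\xi$, then pass to general $\xi$ by the Lipschitz estimate of Lemma \ref{liplemma}.

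\emph{Step 1 (simple random variables).} Let $\xi=\sum_{i=1}^N x_i\mathds{1}_{A_i}$, where $\{A_i\}$ is an $\mathcal{F}_t$-partition of $\Omega$ and $x_i\in\mathbb{R}^d$. For any $\theta\in\mathcal{A}^{\Theta}_{t,T}$, pathwise uniqueness for the SDE gives $X^{t,\xi,\theta}_s=\sum_i \mathds{1}_{A_i}X^{t,x_i,\theta}_s$. The argument is the same one used to prove (\ref{In}): multiply the equation by $\mathds{1}_{A_i}$, use that $A_i\in\mathcal{F}_t$ commutes with $b(\cdot)$ and with stochastic integrals on $[t,T]$, and invoke uniqueness. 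Hence
\[
E_P[\varphi(X^{t,\xi,\theta}_T)|\mathcal{F}_t]=\sum_i\mathds{1}_{A_i}E_P[\varphi(X^{t,x_i,\theta}_T)|\mathcal{F}_t]\le \sum_i\mathds{1}_{A_i}\Lambda_{t,T}[x_i]=\mu_{t,T}(\xi).
\]
Taking the essential supremum over $\theta$ gives $\Lambda_{t,T}[\xi]\le\mu_{t,T}(\xi)$.

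For the reverse inequality, use Lemma \ref{LeA.4} to choose, for each $i$, a sequence $\theta^{i,n}$ with $E_P[\varphi(X^{t,x_i,\theta^{i,n}}_T)|\mathcal{F}_t]\nearrow\Lambda_{t,T}[x_i]$. Paste them together as $\theta^n=\sum_i\mathds{1}_{A_i}\theta^{i,n}$; this is $\Theta$-valued and $\mathbb{F}$-adapted on $[t,T]$ because $A_i\in\mathcal{F}_t$. The same decomposition then shows $E_P[\varphi(X^{t,\xi,\theta^n}_T)|\mathcal{F}_t]\to\mu_{t,T}(\xi)$. Therefore $\Lambda_{t,T}[\xi]\ge\mu_{t,T}(\xi)$, and equality holds a.s.

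\emph{Step 2 (approximation).} For general $\xi\in L^2(\mathcal{F}_t)$, choose simple $\mathcal{F}_t$-measurable $\xi_n$ with $\xi_n\to\xi$ in $L^2$ and a.s.; for instance, quantize $\xi$ on a grid of mesh $1/n$ after truncation. By Lemma \ref{liplemma}(2),
\[
|\Lambda_{t,T}[\xi_n]-\Lambda_{t,T}[\xi]|\le K_\varphi|\xi_n-\xi|.
\]
Since $\mu_{t,T}$ is Lipschitz (it equals $\Lambda_{t,T}$ on constants), we also have $|\mu_{t,T}(\xi_n)-\mu_{t,T}(\xi)|\le K_\varphi|\xi_n-\xi|$. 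Combining these with Step 1 and letting $n\to\infty$ gives $\mu_{t,T}(\xi)=\Lambda_{t,T}[\xi]$ a.s.

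\emph{Main obstacle.} The delicate point is the Lipschitz property (2) of Lemma \ref{liplemma} when the drift $b$ is nonlinear. The flows $X^{t,\xi,\theta}$ and $X^{t,\xi',\theta}$ use the same $\theta$, so their difference solves a random ODE without noise, and Gronwall gives $|X^{t,\xi,\theta}_T-X^{t,\xi',\theta}_T|\le e^{l_b(T-t)}|\xi-\xi'|$. The Lipschitz constant is therefore really $K_\varphi e^{l_b(T-t)}$, which is harmless for the limit argument above. I would also check carefully that the pasted control $\theta^n$ in Step 1 is admissible, and that the conditional expectations decompose over the partition. Both follow from the $\mathcal{F}_t$-measurability of the $A_i$.
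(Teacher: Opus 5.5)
Your proposal is correct and follows essentially the paper's route: you prove the identity for finitely-valued $\mathcal{F}_t$-measurable $\xi$ by pasting controls over the $\mathcal{F}_t$-partition and using pathwise uniqueness, then pass to general $\xi$ through the Lipschitz bound of Lemma \ref{liplemma}. The differences are minor. You get the lower bound directly from the conditional sequences of Lemma \ref{LeA.4}, using that $\Lambda_{t,T}[x_i]=\mu_{t,T}(x_i)$ is deterministic, whereas the paper uses $\mathcal{F}^t$-adapted controls together with independence from $\mathcal{F}_t$. You also approximate an arbitrary $\xi$ rather than only ``$\xi\ge 0$''. Finally, your remark that the Lipschitz constant should be $K_\varphi e^{l_b(T-t)}$ rather than $K_\varphi$ is correct, and it does not affect the limit argument.
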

\begin{proof}
    Firstly, we prove the situation that $\xi$ is a step function, $\xi=\Sigma_{j=1}^N\mathds{1}_{A_j}x_j$, $x_j \in \mathbb{R}^d$, $\{A_j\}_{j=1}^N$ is a $\mathcal{F}_t$-partition of $\Omega$. By the proof of Lemma A.6, for each $x_j$, we can find $\{\theta^{j_n}_n \}_{n=1}^{\infty}$ of $\mathcal{A}^{\Theta}_{t,T}$ be $\mathcal{F}_s^t \text{-}$adapted process such that
    \begin{equation*}
        \underset{n\to \infty}{lim}E_P[\varphi(X_T^{t,x_j,\theta^{j_n}_n})]
         =\Lambda_{t,T}[x_j] =\mu_{t,T}(x_j).
    \end{equation*}
Setting $\theta_n=\Sigma_{j=1}^N\mathds{1}_{{A}_j}\theta^{j_n}_n$, we have
\begin{equation*}
   \begin{split}
        \Lambda_{t,T}[\xi]& \geq E_P[\varphi(X_T^{t,\xi,\theta_n})|\mathcal{F}_t]\\
    &=\Sigma^{N}_{j=1}\mathds{1}_{A_{j}}E_P[\varphi(x_j +\int_t^T b(X_r^{t,x_j,\theta^{j_n}_n})dr+\int_t^T\theta^{j_n}_{n,r}dW_r)|\mathcal{F}_t]\\
    &=\Sigma^{N}_{j=1}\mathds{1}_{A_{j}}E_P[\varphi(x_j +\int_t^T b(X_r^{t,x_j,\theta^{j_n}_n})dr+\int_t^T\theta^{j_n}_{n,r}dW_r)]\\
    &\to \Sigma^{N}_{j=1}\mathds{1}_{A_{j}}\mu_{t,T}(x_j)=\mu_{t,T}(\xi).\\
   \end{split}
\end{equation*}
Moreover, we also have for each $\theta \in \mathcal{A}_{t,T}^\Theta$,
\begin{equation*}
    \begin{split}
        E_P[\varphi(X_T^{t,\xi,\theta^i})|\mathcal{F}_t]
        =&\Sigma^{N}_{j=1}\mathds{1}_{A_{j}}E_P[\varphi(x_j +\int_t^T b(X_r^{t,x,\theta^{j_n}_n})dr+\int_t^T\theta^{j_n}_{n,r}dW_r)|\mathcal{F}_t]\\
        \leq& \Sigma^{N}_{j=1}\mathds{1}_{A_j}\mu_{t,T}(x_j)=\mu_{t,T}(\xi).\\ 
    \end{split}
\end{equation*}
    So we have $\Lambda_{t,T}[\xi] = \mu_{t,T}(\xi)$.

For the case of a general $\xi \in L_G^2(\Omega,\mathcal{F}_t,P;\mathbb{R}^d)$ and $\xi \geq 0$, we can find a sequence of step functions $\{\xi_n\}_{n\geq1}$ such that $\xi_n \uparrow \xi$, as $n \to \infty$. Then
\begin{equation*}
    \begin{split}
        \vert \Lambda_{t,T}[\xi]-\mu_{t,T}(\xi)\vert
        =&\vert \Lambda_{t,T}[\xi]-\Lambda_{t,T}[\xi_n]+\Lambda_{t,T}[\xi_n]-\mu_{t,T}(\xi_n)+\mu_{t,T}(\xi_n)-\mu_{t,T}(\xi)\vert\\
        \leq & 2K_{\varphi}\vert \xi-\xi_n \vert \to 0,\\
    \end{split}
\end{equation*}
as $n\to \infty$, a.s., where $K_{\varphi}$ is the Lipschitz constant of $\varphi$.
\end{proof}

 Now we give the dynamic programming principle.
For any $0 \leq t\leq r\leq T<\infty$, let $ \hat{X}_{r,T}^{t,\xi,\theta}=X_{T}^{t,\xi,\theta}-X_{r}^{t,\xi,\theta}$.
It is clear that
\begin{equation*}
    \underset{\theta\in\mathcal{A}_{t,T}^{\Theta}}{ {\rm esssup}}E_P[\varphi(X_T^{t,\xi,\theta})|\mathcal{F}_t]=\underset{\theta\in\mathcal{A}_{t,r}^{\Theta}}{{\rm esssup}}\left\{ \underset{\bar{\theta}\in\mathcal{A}_{r,T}^{\Theta}}{{\rm esssup}}E_P[\varphi(X_r^{t,\xi,\theta}+\hat{X}_{r,T}^{t,\xi,\bar{\theta}})|\mathcal{F}_t] \right\}.
\end{equation*}
According to Lemma \ref{LeA.4} and Lemma \ref{LeA.7}, we have
\begin{equation*}
    \underset{\bar{\theta}\in\mathcal{A}_{r,T}^{\Theta}}{{\rm esssup}}E_P[\varphi(X_r^{t,\xi,\theta}+\hat{X}_{r,T}^{t,\xi,\bar{\theta}})|\mathcal{F}_t]=E_P[(\underset{\bar{\theta}\in\mathcal{A}_{r,T}^{\Theta}}{{\rm esssup}}E_P[\varphi(X_r^{t,\xi,\theta}+\hat{X}_{r,T}^{t,\xi,\bar{\theta}})|\mathcal{F}_r] |\mathcal{F}_t)].
\end{equation*}
So 
\begin{equation*}
  \underset{\theta\in\mathcal{A}_{t,T}^{\Theta}}{{\rm esssup}}E_P[\varphi(X_T^{t,\xi,\theta})|\mathcal{F}_t]=  \underset{\theta\in\mathcal{A}_{t,r}^{\Theta}}{{\rm esssup}}E_P[(\underset{\bar{\theta}\in\mathcal{A}_{r,T}^{\Theta}}{{\rm esssup}}E_P[\varphi(X_r^{t,\xi,\theta}+\hat{X}_{r,T}^{t,\xi,\bar{\theta}})|\mathcal{F}_r] |\mathcal{F}_t)].
\end{equation*}
For each $\varphi \in C_{b,lip}(\mathbb{R}^d)$ and $(t,x) \in [0,T]\times\mathbb{R}^d$, we set $\nu(t,x)\coloneqq \underset{\theta\in\mathcal{A}^{\Theta}_{t,T}}{{\rm sup}}E_P[\varphi(X_T^{t,x,\theta})].$
And for each $h \in [0,T-t]$, according to the equality above,
\begin{equation*}
    \begin{split}
        \nu(t,x)&=\underset{\theta\in\mathcal{A}^{\Theta}_{t,T}}{{\rm sup}}E_P[\varphi(X_T^{t,x,\theta})]\\
        &=\underset{\theta\in\mathcal{A}^{\Theta}_{t,t+h}}{{\rm sup}}E_P[\underset{\theta\in\mathcal{A}^{\bar{\theta}}_{t+h,T}}{{\rm sup}}E_P[\varphi(X_{t+h}^{t,x,\theta}+\int_{t+h}^T b(X_s^{t+h,X_{t+h}^{t,x,\theta},\bar{\theta}})ds+\int_{t+h}^T\bar{\theta}dW_s)]]\\
        &=\underset{\theta\in\mathcal{A}^{\Theta}_{t,t+h}}{{\rm sup}}E_P[\nu(t+h,X_{t+h}^{t,x,\theta} )].
    \end{split}
\end{equation*}
\begin{proposition}\label{ProA.8}
For any $h \in [0,T-t]$,
  \begin{equation}\label{Anu}
     \nu(t,x)= \underset{\theta\in\mathcal{A}^{\Theta}_{t,t+h}}{{\rm sup}}E_P[\nu(t+h,X_{t+h}^{t,x,\theta} )].
  \end{equation}  
\end{proposition}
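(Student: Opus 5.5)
The plan is to prove the two inequalities in \eqref{Anu} separately, using the esssup machinery already set up: Lemma \ref{LeA.4} (directedness of the family and commutation of esssup with conditional expectation), Lemma \ref{LeA.7} ($\Lambda_{t+h,T}[\xi]=\mu_{t+h,T}(\xi)=\nu(t+h,\xi)$ for $\mathcal{F}_{t+h}$-measurable $\xi$), and the Lipschitz bound of Lemma \ref{liplemma}. First fix $\theta\in\mathcal{A}^{\Theta}_{t,T}$ and split it as its restrictions to $[t,t+h]$ and $[t+h,T]$. By uniqueness of strong solutions, $X_T^{t,x,\theta}=X_T^{t+h,\xi,\theta}$ with $\xi=X_{t+h}^{t,x,\theta}\in L^2(\mathcal{F}_{t+h})$ (flow property).

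For ``$\leq$'', condition on $\mathcal{F}_{t+h}$:
\[
E_P[\varphi(X_T^{t,x,\theta})]=E_P\big[E_P[\varphi(X_T^{t+h,\xi,\theta})|\mathcal{F}_{t+h}]\big]\leq E_P[\Lambda_{t+h,T}[\xi]]=E_P[\nu(t+h,X_{t+h}^{t,x,\theta})].
\]
The last equality is Lemma \ref{LeA.7}, together with the identification $\mu_{t+h,T}(y)=\nu(t+h,y)$. That identification holds because $\Lambda_{t+h,T}[y]$ is deterministic, so it equals its expectation, and by Lemma \ref{LeA.4} (with $r=0$) this expectation is the sup of $E_P[\varphi(X_T^{t+h,y,\theta})]$. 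Taking the sup over $\theta$ restricted to $[t,t+h]$ gives the bound.

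For ``$\geq$'', fix $\theta\in\mathcal{A}^{\Theta}_{t,t+h}$ and $\varepsilon>0$, and put $\xi=X_{t+h}^{t,x,\theta}$. By Lemma \ref{LeA.4} there is a sequence $\bar\theta^i\in\mathcal{A}^{\Theta}_{t+h,T}$ with
\[
E_P[\varphi(X_T^{t+h,\xi,\bar\theta^i})|\mathcal{F}_{t+h}]\nearrow\Lambda_{t+h,T}[\xi]=\nu(t+h,\xi)\quad \text{a.s.}
\]
Concatenate $\theta$ on $[t,t+h)$ with $\bar\theta^i$ on $[t+h,T]$ to get an admissible control in $\mathcal{A}^{\Theta}_{t,T}$; it is adapted because $\bar\theta^i$ is $\mathbb{F}$-adapted on $[t+h,T]$. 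Then $\nu(t,x)\geq E_P[E_P[\varphi(X_T^{t+h,\xi,\bar\theta^i})|\mathcal{F}_{t+h}]]$. Monotone convergence, justified by the boundedness of $\varphi$, gives $\nu(t,x)\geq E_P[\nu(t+h,\xi)]$. Taking the sup over $\theta$ yields ``$\geq$''.

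The main obstacle is the legitimacy of Lemma \ref{LeA.7} for $\xi=X_{t+h}^{t,x,\theta}$. This $\xi$ is only in $L^2(\mathcal{F}_{t+h})$, not necessarily nonnegative, whereas Lemma \ref{LeA.7} was argued via step functions increasing to $\xi\geq0$. I would rederive it by approximating $\xi$ in $L^2$ (hence a.s. along a subsequence) by $\mathcal{F}_{t+h}$-simple random variables $\xi_n$. The step-function case gives $\Lambda_{t+h,T}[\xi_n]=\mu_{t+h,T}(\xi_n)$. Both sides are $K_\varphi$-Lipschitz in the argument, by Lemma \ref{liplemma} and the Lipschitz property of $\mu$, so the identity passes to the limit a.s. Measurability of the concatenated control and the flow identity are routine.
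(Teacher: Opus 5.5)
Your proposal is correct and follows essentially the paper's route. The paper splits the esssup over $\mathcal{A}^{\Theta}_{t,T}$ into nested esssups over $\mathcal{A}^{\Theta}_{t,t+h}$ and $\mathcal{A}^{\Theta}_{t+h,T}$ (asserting this is clear) and then invokes Lemma \ref{LeA.4} and Lemma \ref{LeA.7} to identify the inner term with $\nu(t+h,X^{t,x,\theta}_{t+h})$; your two inequalities (conditioning on $\mathcal{F}_{t+h}$ for one direction, concatenating controls with monotone convergence for the other) are exactly the justification of that splitting.

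Your $L^2$-approximation repair of Lemma \ref{LeA.7}, which drops the unnecessary restriction $\xi\geq 0$, is also sound. One small correction: because the additive noise cancels in the difference, Gronwall gives Lipschitz constant $K_\varphi e^{l_b(T-t-h)}$ for $\Lambda_{t+h,T}[\cdot]$ and $\mu_{t+h,T}$, not $K_\varphi$. This does not affect the argument.
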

\begin{lemma}
    The function $\nu$ defined by (\ref{Anu}) is bounded by $C_{\varphi}$ and is a Lipschitz function in $x$ and $\frac{1}{2}\text{-}h\ddot{\rm o}lder$ function in $t$.
\end{lemma}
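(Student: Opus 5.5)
The plan is to prove the three properties separately, working directly with the representation $\nu(t,x)=\sup_{\theta\in\mathcal{A}^{\Theta}_{t,T}}E_P[\varphi(X_T^{t,x,\theta})]$ and with the dynamic programming principle of Proposition \ref{ProA.8}.

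\emph{Boundedness.} Each $E_P[\varphi(X_T^{t,x,\theta})]$ is bounded by $\sup\vert\varphi\vert$, and hence so is the supremum. Alternatively, since $\nu(t,x)=\Lambda_{t,T}[x]=\mu_{t,T}(x)$, boundedness follows at once from Lemma \ref{liplemma}(1).

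\emph{Lipschitz continuity in $x$.} Fix $\theta\in\mathcal{A}^{\Theta}_{t,T}$ and compare $X^{t,x,\theta}$ with $X^{t,x',\theta}$. Both are driven by the same noise term $\int\theta\,dW$, so the stochastic integrals cancel in the difference and
\begin{equation*}
X_s^{t,x,\theta}-X_s^{t,x',\theta}=x-x'+\int_t^s\bigl(b(X_r^{t,x,\theta})-b(X_r^{t,x',\theta})\bigr)dr .
\end{equation*}
Gronwall's inequality then gives, pathwise, $\vert X_T^{t,x,\theta}-X_T^{t,x',\theta}\vert\le e^{l_b(T-t)}\vert x-x'\vert$. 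Consequently
\begin{equation*}
\vert E_P[\varphi(X_T^{t,x,\theta})]-E_P[\varphi(X_T^{t,x',\theta})]\vert\le K_\varphi e^{l_bT}\vert x-x'\vert .
\end{equation*}
This bound is uniform in $\theta$, and since $\vert\sup a_\theta-\sup b_\theta\vert\le\sup\vert a_\theta-b_\theta\vert$, it passes to $\nu$. This is consistent with Lemma \ref{liplemma}(2).

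\emph{$\tfrac12$-Hölder continuity in $t$.} For $0\le t<t+h\le T$, I would apply Proposition \ref{ProA.8}:
\begin{equation*}
\vert\nu(t,x)-\nu(t+h,x)\vert\le\sup_{\theta\in\mathcal{A}^{\Theta}_{t,t+h}}E_P\bigl[\vert\nu(t+h,X_{t+h}^{t,x,\theta})-\nu(t+h,x)\vert\bigr].
\end{equation*}
By the Lipschitz bound just proved, this is at most $L\,\sup_\theta E_P[\vert X_{t+h}^{t,x,\theta}-x\vert]$ with $L=K_\varphi e^{l_bT}$. To estimate $E_P[\vert X_{t+h}^{t,x,\theta}-x\vert^2]$, I would use Itô's isometry together with boundedness of $\Theta$ for the martingale part, and linear growth of $b$ combined with the standard moment bound $E_P[\sup_{s\le T}\vert X_s\vert^2]\le C(1+\vert x\vert^2)$ for the drift part. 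This is the classical analogue of Corollary \ref{coA.2} and gives
\begin{equation*}
E_P[\vert X_{t+h}^{t,x,\theta}-x\vert^2]\le C(1+\vert x\vert^2)h ,
\end{equation*}
with $C$ independent of $\theta$. Cauchy--Schwarz then yields
\begin{equation*}
\vert\nu(t,x)-\nu(t+h,x)\vert\le C(1+\vert x\vert)h^{1/2}.
\end{equation*}

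The main obstacle is the time regularity, because it relies on the dynamic programming identity (\ref{Anu}), whose derivation uses the measurable-selection and commutation arguments (Lemma \ref{LeA.4} and Yan's theorem). I take that identity as given from Proposition \ref{ProA.8}. The remaining subtlety is that the Hölder constant grows like $1+\vert x\vert$. So the statement should be read as locally uniform in $x$, or, if $b$ is bounded, globally uniform, since then the drift contribution is $O(h)$. I would state the constant in that form.
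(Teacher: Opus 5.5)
Your proof is correct and follows essentially the same route as the paper. You establish boundedness and Lipschitz continuity in $x$ first; the paper simply cites Lemma~\ref{liplemma}, while your pathwise Gronwall comparison gives the more accurate constant $K_\varphi e^{l_bT}$ rather than $K_\varphi$. You then derive the $\frac{1}{2}$-H\"older bound in $t$ from the dynamic programming identity (\ref{Anu}) and a second-moment estimate $E_P[\vert X_{t+h}^{t,x,\theta}-x\vert^2]\le C(1+\vert x\vert^2)h$ uniform in $\theta$, with the same $x$-dependence of the constant as in the paper. Your use of the classical It\^o moment bound in place of the $G$-SDE Corollary~\ref{coA.2} is, if anything, the more appropriate reference, since $X^{t,x,\theta}$ is an ordinary diffusion under $P$.
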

\begin{proof}
  It is easy to see that $\nu$ is bounded by $C_{\varphi}$ and $\nu$ is a Lipschitz function in $x$ according to Lemma \ref{liplemma} since $\varphi$ is Lipchitz. So we only need to prove $\nu$ is $\frac{1}{2}\text{-}h\ddot{\rm o}lder$ function in $t$. For any $h \in [0,T-t]$,
  \begin{equation*}
      \begin{split}
        \vert  \nu(t,x)-\nu(t+h,x) \vert
        \leq &\underset{\theta\in\mathcal{A}^{\Theta}_{t,t+h}}{{\rm sup}}E_P[\vert \nu(t+h,X_{t+h}^{t,x,\theta} )-\nu(t+h,x)\vert ] \\
        \leq&K_{\varphi}\underset{\theta\in\mathcal{A}^{\Theta}_{t,t+h}}{{\rm sup}}E_P[\vert X_{t+h}^{t,x,\theta}-x \vert].
      \end{split}
  \end{equation*}
According to Corollary \ref{coA.2}, we have
\begin{equation*}
      E_P[\vert X_{t+h}^{t,x,\theta}-x \vert]  
     \leq  (E_P[\vert X_{t+h}^{t,x,\theta}-x \vert^2])^{\frac{1}{2}}
     \leq C(1+\vert x\vert^2)^{\frac{1}{2}}h^{\frac{1}{2}}, 
\end{equation*}
 where the constant $C$ depends only on the Lipschitz constant.
 So $\nu$ is $\frac{1}{2}\text{-}h\ddot{o}lder$ continuous in $t$.
\end{proof}

\begin{theorem}
    The function $\nu$ is a viscosity solution of the following PDE,
    \begin{equation}\label{PDE}
          \partial_t\nu+F(D^2\nu,D\nu,\nu,x)=0,\ \ 
          \nu(T,x)=\varphi(x).
    \end{equation}
    where 
    \begin{equation*}
        F(D^2\nu,D\nu,\nu,x)=G(D^2\nu)+\left\langle D\nu,b(x)\right\rangle.
    \end{equation*}
  
\end{theorem}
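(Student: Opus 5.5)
We verify directly that $\nu$, defined by $\nu(t,x)=\sup_{\theta\in\mathcal{A}^{\Theta}_{t,T}}E_P[\varphi(X_T^{t,x,\theta})]$, is a viscosity subsolution and supersolution of (\ref{PDE}). The main tool is the dynamic programming principle of Proposition \ref{ProA.8}. From the preceding lemma, $\nu$ is bounded, Lipschitz in $x$ and $\frac12$-H\"older in $t$, so it is continuous. The terminal condition $\nu(T,x)=\varphi(x)$ is immediate from the definition.

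\emph{Subsolution.} Fix $(t,x)$ with $t<T$ and a test function $\psi\in C^{1,2}_b$ such that $\psi\geq\nu$ near $(t,x)$ and $\psi(t,x)=\nu(t,x)$; we may assume $\psi$ has bounded derivatives. By (\ref{Anu}), for small $h>0$,
\begin{equation*}
0=\sup_{\theta\in\mathcal{A}^{\Theta}_{t,t+h}}E_P[\nu(t+h,X^{t,x,\theta}_{t+h})]-\nu(t,x)\leq \sup_{\theta}E_P[\psi(t+h,X^{t,x,\theta}_{t+h})-\psi(t,x)].
\end{equation*}
Strictly, the comparison $\nu\le\psi$ is only local. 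We handle this by modifying $\psi$ outside a neighbourhood so that $\psi\geq\nu$ globally, which is possible because $\nu$ is bounded. Alternatively, one can control $P(|X^{t,x,\theta}_{t+h}-x|>\delta)\le Ch/\delta^2$ via Corollary \ref{coA.2}. Applying It\^o's formula under $P$ gives
\begin{equation*}
E_P\Big[\int_t^{t+h}\Big(\partial_s\psi+\langle D\psi,b\rangle+\tfrac12\mathrm{tr}(\theta_s\theta_s^*D^2\psi)\Big)(s,X_s^{t,x,\theta})\,ds\Big].
\end{equation*}
The integrand differs from its value at $(t,x)$ by a term that is uniformly small in $\theta$, again by Corollary \ref{coA.2} and the boundedness of $\Theta$. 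We also use $\frac12\mathrm{tr}(\theta\theta^*A)\le \sup_{\gamma\in\Theta}\frac12\mathrm{tr}(\gamma\gamma^*A)=G(A)$, which is the identification of $G$ with $\Theta$ (cf.\ Remark \ref{remark} and Example \ref{Gexample} with $\Sigma=\{\gamma\gamma^*:\gamma\in\Theta\}$). Dividing by $h$ and letting $h\downarrow0$ then yields $\partial_t\psi+G(D^2\psi)+\langle D\psi,b\rangle\geq0$ at $(t,x)$.

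\emph{Supersolution.} Take $\psi\le\nu$ touching at $(t,x)$. Here we choose a constant control $\theta\equiv\gamma$ for a fixed $\gamma\in\Theta$. The dynamic programming principle gives $0\geq E_P[\psi(t+h,X^{t,x,\gamma}_{t+h})]-\psi(t,x)$. The same It\^o argument yields $\partial_t\psi+\frac12\mathrm{tr}(\gamma\gamma^*D^2\psi)+\langle D\psi,b\rangle\le0$. Taking the supremum over $\gamma\in\Theta$ gives $\partial_t\psi+G(D^2\psi)+\langle D\psi,b\rangle\le0$.

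The main obstacle is making the small-time limit in the subsolution step uniform over the control set $\mathcal{A}^\Theta_{t,t+h}$. This requires the error $E_P|\Phi(s,X^{t,x,\theta}_s)-\Phi(t,x)|$, where $\Phi$ collects the derivatives of $\psi$, to be $o(1)$ uniformly in $\theta$. We get this from the uniform moment bound $E_P|X^{t,x,\theta}_{s}-x|^2\le C(1+|x|^2)(s-t)$, whose constant depends only on $l_b$ and $\sup_{\gamma\in\Theta}|\gamma|$, together with uniform continuity of the derivatives of $\psi$ on compacts. Once this holds, combining with the uniqueness theorem above for (\ref{PDE}) identifies $\nu$ with the $G$-expectation $\bar{E}[\varphi(X^{t,x}_T)]$, which is exactly what Lemma \ref{mianlemma} asserts.
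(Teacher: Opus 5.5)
Your proposal is correct and follows essentially the same route as the paper for the subsolution property: the dynamic programming principle of Proposition \ref{ProA.8}, It\^o's formula under $P$, a replacement of the integrand by its value at $(t,x)$ that is uniform in $\theta$ via Corollary \ref{coA.2}, and the bound $\tfrac12\mathrm{tr}(\theta_s\theta_s^*A)\le\sup_{\gamma\in\Theta}\tfrac12\mathrm{tr}(\gamma\gamma^*A)=G(A)$. Your supersolution argument with constant controls $\theta\equiv\gamma$ and your localization of the test function are sound, and they fill in what the paper leaves to ``similarly'' and to its global assumption $\psi\geq\nu$.
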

\begin{proof}
    For fixed $(t,x) \in [0,T]\times \mathbb{R}^d$, let $\psi \in C_b^{2,3}((0,T)\times\mathbb{R}^d)$ be such that $\psi \geq \nu$ and $\psi(t,x)=\nu(t,x)$. From Proposition \ref{ProA.8} and It\^{o}'s formula, we have that
   \begin{equation}\label{longineq}
       \begin{split}
           0&=\underset{\theta\in\mathcal{A}^{\Theta}_{t,t+h}}{{\rm sup}}E_P[\nu(t+h,X_{t+h}^{t,x,\theta} )-\nu(t,x)]\\
           &\leq \underset{\theta\in\mathcal{A}^{\Theta}_{t,t+h}}{{\rm sup}}E_P[\psi(t+h,X_{t+h}^{t,x,\theta} )-\psi(t,x)]\\
           &=\underset{\theta\in\mathcal{A}^{\Theta}_{t,t+h}}{{\rm sup}}E_P[\int_t^{t+h}\{\frac{\partial\psi}{\partial s}+\left\langle D\psi, b \right\rangle
           +\frac{1}{2}tr(\theta_s\theta^T_sD^2\psi)\}(s,X_s^{t,x,\theta})ds].
       \end{split}
   \end{equation}
Since $(\frac{\partial\psi}{\partial s}+\frac{1}{2}tr[\theta_s\theta^T_sD^2\psi])(s,y)$ is uniformly Lipschitz in $(s,y)$, we have that for small $h > 0$, $t \leq s \leq t+h$,
\begin{equation*}
    \begin{split}
        &E_P[(\frac{\partial\psi}{\partial s}+\frac{1}{2}tr[\theta_s\theta^T_sD^2\psi])(s,X_s^{t,x,\theta})]\\
        \leq & E_P[(\frac{\partial\psi}{\partial s}+\frac{1}{2}tr[\theta_s\theta^T_sD^2\psi])(t,x)+C \vert X_s^{t,x,\theta}-x \vert]+ Ch,
    \end{split}
\end{equation*}
where the constant $C$ depends only on the Lipschitz constant. 

Moreover, $(D\psi)(s,y)$ is also uniformly Lipschitz in $(s,y)$ and bounded, $b(y)$ is Lipschitz in $y$, then,
\begin{equation*}
    \begin{split}
    &\vert \left\langle D\psi(s,X_s^{t,x,\theta}), b(X_s^{t,x,\theta}) \right\rangle   -  \left\langle D\psi(t,x), b(x) \right\rangle    \vert\\
     \leq &  \vert \left\langle D\psi(s,X_s^{t,x,\theta}), b(X_s^{t,x,\theta}) \right\rangle \vert + \vert    \left\langle D\psi(t,x), b(x) \right\rangle    \vert\\
      &\ \ +   \left\langle D\psi(s,X_s^{t,x,\theta}), b(x) \right\rangle-  \left\langle D\psi(t,x), b(x) \right\rangle                     \vert\\
      \leq & \vert \left\langle D\psi(s,X_s^{t,x,\theta}), b(X_s^{t,x,\theta})-b(x) \right\rangle \vert + \vert \left\langle D\psi(s,X_s^{t,x,\theta})-D\psi(t,x), b(x) \right\rangle  \vert\\
      \leq& C\vert D\psi(s,X_s^{t,x,\theta})\vert\vert X_s^{t,x,\theta} - x\vert+C\vert b(x) \vert (\vert X_s^{t,x,\theta} - x\vert+h)\\
      \leq & C \vert X_s^{t,x,\theta}-x \vert]+ Ch,
    \end{split}
\end{equation*}
where $C$ is a constant depending on the Lipschitz constant, the bounds of $\vert D\psi\vert $ and $\vert b(x)\vert$, and may vary from line to line.

From Corollary \ref{coA.2}, we know that $ E_P[\vert X_s^{t,x,\theta}-x \vert] \leq C(1+\vert x\vert^2)^{\frac{1}{2}}h^{\frac{1}{2}}$.
So,it follows from (\ref{longineq}) that 
\begin{equation*}
\begin{split}
   \underset{\theta\in\mathcal{A}^{\Theta}_{t,t+h}}{{\rm sup}}E_P[&\int_t^{t+h}\frac{\partial\psi}{\partial s}(t,x)+\left\langle D\psi(t,x), b(x) \right\rangle
          +\frac{1}{2}tr[\theta_s\theta^T_sD^2\psi](t,x)dx]\\
          &+ Ch^{\frac{3}{2}}+Ch^2\geq 0.\\      
\end{split}
\end{equation*}
Here $C$ is the constant depend on the Lipschitz constant, the bounded of $D\psi$, $b(x)$ and $\vert x \vert$.
Thus, 
\begin{equation*}
 [\frac{\partial\psi}{\partial s}(t,x)+\left\langle D\psi(t,x), b(x) \right\rangle
          +\frac{1}{2}\underset{\gamma \in \Theta}{{\rm sup}}\ tr[\gamma \gamma^TD^2\psi](t,x)]h \geq 0.   
\end{equation*}
It turns out that 
\begin{equation*}
   \partial_t\psi+F(D^2\psi,D\psi,\psi,x)\geq 0.
\end{equation*}
So $\nu$ is a viscosity subsolution of PDE (\ref{PDE}). Similarly, we can prove that it is also a viscosity supersolution, thus it is a viscosity solution.
\end{proof}

By the uniqueness of the viscosity solution (Theorem 2.9 \cite{pengbook}) and the nonlinear Feynman-Kac formula, let $t=0$ and the following Theorem follows easily.
\begin{theorem}
Let $ X_T=x+\int^T_0 b(X_s)ds+B_T$, where $X_0=x \in \mathbb{R}^d$ is the initial value, $b$ is a Lipschitz continuous function, $B$ is a d-dimensional $G\text{-}$Brownian motion. Then for each $\varphi \in C_{b,lip}(\mathbb{R}^{d })$, for fixed $0 \leq T < \infty$, the G-expectation can be equivalently defined by
\begin{equation}
    \bar{E}[\varphi(X_T)]=\underset{\theta \in  \mathcal{A}^{\Theta}_{0,T}}{{\rm sup}} E_P[\varphi(x+\int^T_0 b(X_s^{\theta})ds+\int_0^T\theta_sdW_s],
\end{equation}
where $ X_t^{\theta}=x + \int_0^tb(X_s^{\theta})ds+\int_0^t\theta_sdW_s$.
\end{theorem}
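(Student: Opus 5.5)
The plan is to identify both sides of the claimed identity as viscosity solutions of the same terminal-value problem and conclude by uniqueness. On the $G$-side, take the nonlinear Feynman--Kac formula recalled at the start of this section with $h_{ij}=0$, $\sigma$ the identity matrix, $f=g_{ij}=0$ and $\Phi=\varphi\in C_{b,lip}(\mathbb{R}^d)$. Then $u(t,x)=Y^{t,x}_t=\bar{E}[\varphi(X_T^{t,x})]$ is the unique viscosity solution of
\begin{equation*}
\partial_t u+G(D^2u)+\langle Du,b(x)\rangle=0,\qquad u(T,x)=\varphi(x),
\end{equation*}
where $G(Q)=\frac12\bar{E}[\langle QB_1,B_1\rangle]$. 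On the control side, the function $\nu(t,x)=\sup_{\theta\in\mathcal{A}^{\Theta}_{t,T}}E_P[\varphi(X^{t,x,\theta}_T)]$ was just shown, in the preceding theorem, to be a viscosity solution of (\ref{PDE}). That PDE has the nonlinearity $\frac12\sup_{\gamma\in\Theta}\operatorname{tr}[\gamma\gamma^TD^2\nu]$ in place of $G(D^2\nu)$.

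The first step is to match the two Hamiltonians. I would choose (equivalently, define) $\Theta$ so that $\{\gamma\gamma^T:\gamma\in\Theta\}=\Sigma$, where $\Sigma$ is the closed bounded convex set representing $G$ from Example \ref{Gexample}. Since every $B\in\Sigma$ is positive semidefinite, it has a square root, so such a $\Theta$ exists, and it is bounded and closed. With this choice $G(A)=\sup_{B\in\Sigma}\frac12\operatorname{tr}(AB)=\frac12\sup_{\gamma\in\Theta}\operatorname{tr}(\gamma\gamma^TA)$, and the two PDEs coincide. This is also what Remark \ref{remark} records about the variance bounds.

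The second step is to check that both functions lie in the uniqueness class of Theorem 2.9 in \cite{pengbook}. That class is continuous functions with polynomial growth. Both functions are bounded by $\sup|\varphi|$. The function $\nu$ is Lipschitz in $x$ and $\frac12$-Hölder in $t$ by the lemma above, and $u$ has the analogous regularity from the $G$-SDE estimates (the Proposition and Corollary \ref{coA.2} quoted from \cite{pengbook}). Both satisfy the terminal condition $\varphi$ at $t=T$. Comparison then yields $u\equiv\nu$ on $[0,T]\times\mathbb{R}^d$. Setting $t=0$ gives the displayed formula, since $X_T=X^{0,x}_T$ and $X^{0,x,\theta}=X^\theta$. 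For general $t$ the same identity is exactly Lemma \ref{mianlemma}.

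I expect the main obstacle to be the uniqueness step, because $b$ is only Lipschitz, so $\langle p,b(x)\rangle$ grows linearly in $x$. I would first confirm that Peng's comparison theorem allows Lipschitz drift with a Lipschitz, $x$-uniformly controlled $G$-part; it does, since the structure condition is satisfied. If any doubt remains, I would fall back on the supersolution direction of the preceding theorem, which was omitted there. I would verify it by choosing a constant control $\theta\equiv\gamma^*$ that attains $\sup_{\gamma}\operatorname{tr}(\gamma\gamma^TD^2\psi)(t,x)$ in the dynamic programming identity of Proposition \ref{ProA.8}, so that both the sub- and supersolution properties genuinely hold before comparison is invoked.
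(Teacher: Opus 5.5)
Your proposal is correct and follows the paper's route. The paper likewise identifies $\bar{E}[\varphi(X_T^{t,x})]=Y^{t,x}_t$ (via nonlinear Feynman--Kac) and the control value function $\nu$ as viscosity solutions of (\ref{PDE}), invokes uniqueness (Theorem 2.9 of \cite{pengbook}), and sets $t=0$; your explicit choice of $\Theta$ with $\{\gamma\gamma^T:\gamma\in\Theta\}=\Sigma$ (so that $\frac12\sup_{\gamma\in\Theta}\mathrm{tr}(\gamma\gamma^T A)=G(A)$) and your constant-control check of the supersolution property supply details the paper leaves implicit.
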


\section*{Acknowledgments}
HZ acknowledges financial support from the EPSRC Established Career Fellowship (EP/S005293/2) and the Newton Fund (NIF \textbackslash R1\textbackslash 221003).


\bibliographystyle{siamplain}
\bibliography{references}
\end{document}